\numberwithin{equation}{section}
\numberwithin{figure}{section}
\theoremstyle{plain}
\newtheorem{thm}{\protect\theoremname}
\theoremstyle{plain}
\newtheorem{lem}[thm]{\protect\lemmaname}
\theoremstyle{definition}
\newtheorem{defn}[thm]{\protect\definitionname}
\theoremstyle{remark}
\newtheorem*{claim*}{\protect\claimname}
\theoremstyle{plain}
\newtheorem{cor}[thm]{\protect\corollaryname}
\theoremstyle{remark}
\newtheorem*{rem*}{\protect\remarkname}
\theoremstyle{remark}
\newtheorem{rem}[thm]{\protect\remarkname}
\theoremstyle{plain}
\newtheorem{prop}[thm]{\protect\propositionname}
\theoremstyle{definition}
\newtheorem{example}[thm]{\protect\examplename}
\newcommand{\Pow}[1]{\mathcal {P}(#1)} 					
\newcommand{\M}{\mathcal {M}} 
\newcommand{\N}{\mathcal {N}} 
\newcommand{\W}{\mathcal {W}} 
\newcommand{\R}{\mathcal {R}}     					
\newcommand{\E}{\mathbb{E}} 						
\newcommand{\fin}[1]{[#1]^{<\omega}}   				
\newcommand{\fu}[3]{ult_{#3}(#1,#2)}
\newcommand{\Hull}[2]{Hull_{#2}^{#1}}
\newcommand{\T}{\mathcal{T}}
\newcommand{\U}{\mathcal{U}}
\newcommand{\WO}{WO}
\newcommand{\cH}[2]{cHull_{#2}^{#1}}
\DeclareMathOperator{\crt}{crit}
\DeclareMathOperator{\Th}{Th}
\DeclareMathOperator{\lh}{lh}
\DeclareMathOperator{\wfp}{wfp}
\DeclareMathOperator{\ran}{ran}
\DeclareMathOperator{\pred}{pred}
\DeclareMathOperator{\lgcd}{lgcd}
\DeclareMathOperator{\cof}{cof}
\DeclareMathOperator{\OR}{OR}
\DeclareMathOperator{\dom}{dom}
\DeclareMathOperator{\otp}{otp}
\DeclareMathOperator{\col}{Col}
\DeclareMathOperator{\trc}{trc}
\DeclareMathOperator{\ad}{ad}
\DeclareMathOperator{\ck}{CK}
\providecommand{\claimname}{Claim}
\providecommand{\corollaryname}{Corollary}
\providecommand{\definitionname}{Definition}
\providecommand{\examplename}{Example}
\providecommand{\lemmaname}{Lemma}
\providecommand{\propositionname}{Proposition}
\providecommand{\remarkname}{Remark}
\providecommand{\theoremname}{Theorem}
\begin{document}
\title{On a Conjecture Regarding the Mouse Order for Weasels}
\author{Jan Kruschewski and Farmer Schlutzenberg\thanks{The first author was funded by the Deutsche Forschungsgemeinschaft
(DFG, German Research Foundation) grant no. 445387776.The second author
was funded by the Deutsche Forschungsgemeinschaft (DFG, German Research
Foundation) under Germany's Excellence Strategy EXC 2044 --390685587,
Mathematics Münster: Dynamics--Geometry--Structure.}}
\maketitle
\begin{abstract}
We investigate Steel's conjecture in 'The Core Model Iterability Problem'
\cite{CMIP}, that if $\W$ and $\R$ are $\Omega+1$-iterable, $1$-small
weasels, then $\W\leq^{*}\R$ iff there is a club $C\subset\Omega$
such that for all $\alpha\in C$, if $\alpha$ is regular, then $\alpha^{+\W}\leq\alpha^{+\R}$. 

We will show that the conjecture fails, assuming that there is an
iterable premouse which models $KP$ and which has a $\Sigma_{1}$-Woodin
cardinal. 

On the other hand, we show that assuming there is no transitive model
of $KP$ with a Woodin cardinal the conjecture holds. 

In the course of this we will also show that if $M$ is an iterable
admissible premouse with a largest, regular, uncountable cardinal
$\delta$, and $\mathbb{P}$ is a forcing poset such that $M\models"\mathbb{P}\text{ has the }\delta\text{-c.c.}"$,
and $g\subset\mathbb{P}$ is $M$-generic, $M[g]\models KP$. Moreover,
if $M$ is such a premouse and $\T$ is a maximal normal iteration
tree on $M$ such that $\T$ is non-dropping on its main branch, then
$\M_{\infty}^{\T}$ is again an iterable admissible premouse with
a largest regular and uncountable cardinal.

At last we will answer another question from \cite{CMIP} about the
$S$-hull property.
\end{abstract}

\section{Introduction}

In his book 'The Core Model Iterability Problem' \cite{CMIP} John
Steel conjectured on p.28 that if $\W$ and $\R$ be $1$-small weasels
which are $\Omega+1$-iterable, then $\W\leq^{*}\R$ iff there is
a club $C\subset\Omega$ such that for all $\alpha\in C$, if $\alpha$
is regular, then $(\alpha^{+})^{\W}\leq(\alpha^{+})^{\R}$.

In the terminology of the book a weasel is premouse of ordinal height
$\Omega$, where $\Omega$ is a fixed measurable cardinal. The relation
$\leq^{*}$ is the mouse order, i.e if $\W$ and $\R$ are weasels
which are sufficiently iterable to successfully coiterate (by results
of the book $\Omega+1$-iterablility suffices), then $\W\leq^{*}\R$
iff $\R$ wins the coiteration, i.e. if $(\T,\U)$ is the successful
coiteration of $(\W,\R)$ such that $\lh(T)=\theta+1$ and $\lh(\U)=\gamma+1$,
then $\M_{\theta}^{\T}\unlhd\M_{\gamma}^{\U}$.

Steel showed in \cite{CMIP} that the Conjecture holds for weasels
small enough that linear iterations suffice for comparison. His proof
is based on universal linear iterations. 

In the following we will prove the Conjecture under the assumption
that neither $\W$ nor $\R$ have an initial segment which models
the theory $KP$ and has a Woodin cardinal, see Theorem \ref{thm:Main Theorem of Section One}.
In particular, the Conjecture holds if there is no transitive model
of $KP$ with a Woodin cardinal. 

On the other hand, assuming the existence of an iterable admissible
premouse with a $\Sigma_{1}$-Woodin cardinal, we will construct a
counterexample to the Conjecture, see Section \ref{subsec:The-Construction-of the counterexample}.

For the construction of the counterexample from the assumption just
described we will need to investigate the extender algebra over admissible
mice. In the course of this, we will show that if $M$ is an iterable
admissible premouse with a largest, regular, uncountable cardinal
$\delta$, $\mathbb{P}$ is a forcing poset such that $M\models"\mathbb{P}\text{ has the }\delta\text{-c.c.}"$,
and $g\subset\mathbb{P}$ is $M$-generic, $M[g]\models KP$, see
Theorem \ref{thm:KP in delta c.c. forcings}

In Section \ref{sec:On-another-Conjecture} we will answer another
question from p. 32 of \cite{CMIP} about the $S$-hull property in
$1$-small weasels. We will show that for an $\Omega+1$-iterable
weasel $\W$ such that $\Omega$ is $S$-thick in $\W$ the set of
points that have the $S$-hull property is closed, see Theorem \ref{thm:Hull Property Set is Closed}.

We will use the notation from \cite{MR4290497}. A $k$-maximal iteration
tree is in our terminology a $k$-maximal iteration tree in the sense
of \cite{MR2768698}. 

\section{Where the Conjecture Holds}

In this section we will show under the assumption that there is no
transitive model of $KP$ with a Woodin cardinal that Steel's conjecture
holds. This will be a consequence of Theorem \ref{thm:Main Theorem of Section One}.
Lemma \ref{lem:Key Lemma of Section One} is the key insight for proving
Theorem \ref{thm:Main Theorem of Section One}. Before we prove this
Lemma let us recall some well-known basic properties of weasels and
their coiterations.

The following Lemma from \cite{CMIP} guarantees that the coiteration
of two $\Omega+1$-iterable weasels of height $\Omega$ is successful.
\begin{thm}
\label{thm:Coiteration of Weasels}Let $\kappa$ be an inaccessible
cardinal. Let $\M$ and $\N$ be premice such that $\OR^{\M}=\OR^{\N}=\kappa$,
and suppose that $\M$ and $\N$ are $\kappa+1$-iterable. Let $(\mathcal{T},\mathcal{U})$
be the terminal coiteration of $(\M,\N)$. Then $(\T,\U)$ is successful
and $\max\{\lh(\T)\lh(\U)\}\leq\kappa+1$. Moreover, setting $\lh(\T)=\theta+1$
and $\lh(\U)=\gamma+1$, either
\begin{enumerate}
\item $D^{\T}\cap[0,\theta]_{T}=\emptyset$, $i_{0,\theta}^{\T}[\kappa]\subset\kappa$,
$\M_{\theta}^{\T}\unlhd\M_{\gamma}^{\U}$, and $\OR^{\M_{\theta}^{\T}}=\kappa$
or 
\item $D^{\U}\cap[0,\gamma]_{U}=\emptyset$, $i_{0,\gamma}^{\U}[\kappa]\subset\kappa$,
$\M_{\gamma}^{\U}\unlhd\M_{\theta}^{\T}$, and $\OR^{\M_{\gamma}^{\U}}=\kappa$.
\end{enumerate}
\end{thm}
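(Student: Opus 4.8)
The plan is to run the standard (normal) terminal coiteration of $(\M,\N)$ by comparison at the least disagreement, and to use the inaccessibility of $\kappa$ to bound its length by $\kappa+1$. The single non-routine ingredient is the following well-known fact about premice whose ordinal height is inaccessible, which I would record first: if $N$ is a premouse with $\OR^{N}\leq\kappa$ and $i\colon N\to N'$ is an iteration map along a branch of a normal iteration tree of length $\leq\kappa$ all of whose extenders have length $<\kappa$, then $i[\kappa]\subseteq\kappa$ and $\OR^{N'}\leq\kappa$; and if moreover the branch along which $i$ is taken does not drop in model or degree, then $i$ is continuous at $\kappa$ and $\OR^{N'}=\kappa$. (Regularity of $\kappa$ controls suprema of ranges of functions with domain below $\kappa$; strong-limitness is used to see $N\mid\kappa$ is closed enough that the relevant functions lie in $N$, so that iterates stay of size $<\kappa$ on dropping branches and of size $\kappa$ on the non-dropping one.)

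Next I would set up the coiteration: at stage $\alpha$ stop if $\M_{\alpha}^{\T}\unlhd\M_{\alpha}^{\U}$ or $\M_{\alpha}^{\U}\unlhd\M_{\alpha}^{\T}$; otherwise let $\delta_{\alpha}$ be the index of the least disagreement between the extender sequences of $\M_{\alpha}^{\T}$ and $\M_{\alpha}^{\U}$ and apply, on whichever side has an extender indexed $\delta_{\alpha}$, that extender, choosing cofinal wellfounded branches at limits. By $\kappa+1$-iterability of $\M$ and $\N$ the wellfoundedness and branch choices are available through stage $\kappa+1$, so the process runs at least that far unless it stops earlier. A straightforward induction on $\alpha$, using the recalled fact and $\OR^{\M}=\OR^{\N}=\kappa$, shows that as long as the coiteration has not stopped, every $\M_{\alpha}^{\T}$ and $\M_{\alpha}^{\U}$ has ordinal height $\leq\kappa$; hence $\delta_{\alpha}<\kappa$ (were $\delta_{\alpha}=\kappa$, one of the two models, having extender sequence of length $\geq\kappa$ and height $\leq\kappa$, would be an initial segment of the other, contradicting non-termination), and therefore every extender used before termination has length $<\kappa$, which is what makes the fact applicable in the induction. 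The $\delta_{\alpha}$ are clearly strictly increasing in $\alpha$.

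Now the length bound, which is the heart of the argument. Suppose toward a contradiction the coiteration has not stopped by stage $\kappa$, so $\M_{\kappa}^{\T}$ and $\M_{\kappa}^{\U}$ exist, are wellfounded (by $\kappa+1$-iterability) and are incomparable. Since $\kappa$ is regular and $\langle\delta_{\alpha}:\alpha<\kappa\rangle$ is strictly increasing below $\kappa$, we have $\sup_{\alpha<\kappa}\delta_{\alpha}=\kappa$; by the standard agreement property of coiterations $\M_{\kappa}^{\T}\mid\delta_{\alpha}=\M_{\kappa}^{\U}\mid\delta_{\alpha}$ for each $\alpha$, so $\M_{\kappa}^{\T}$ and $\M_{\kappa}^{\U}$ have the same extender sequence below $\kappa$. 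But both have height $\leq\kappa$ by the previous paragraph; hence one of them has height exactly $\kappa$ and is then an initial segment of (or equal to) the other, so they are comparable — contradiction. Thus $\max\{\lh(\T),\lh(\U)\}\leq\kappa+1$, and since the process cannot break down before stage $\kappa+1$, it terminates, i.e. it is successful. Finally I would read off the dichotomy: writing $\lh(\T)=\theta+1$, $\lh(\U)=\gamma+1$, successfulness gives $\M_{\theta}^{\T}\unlhd\M_{\gamma}^{\U}$ or $\M_{\gamma}^{\U}\lhd\M_{\theta}^{\T}$, and by the usual analysis of terminal coiterations (as in \cite{MR2768698}) the main branch on the side whose last model is an initial segment of the other does not drop in model or degree; in the first case this is $D^{\T}\cap[0,\theta]_{T}=\emptyset$, and applying the recalled fact to $i_{0,\theta}^{\T}$ gives $i_{0,\theta}^{\T}[\kappa]\subseteq\kappa$ and $\OR^{\M_{\theta}^{\T}}=i_{0,\theta}^{\T}(\kappa)=\kappa$, which is clause (1); the second case gives clause (2) symmetrically.

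The main obstacle is exactly the fact invoked at the start: that extenders of length below an inaccessible $\kappa$, iterated along a non-dropping branch of length at most $\kappa$ on a premouse of height $\kappa$, produce a map continuous at $\kappa$ that sends $\kappa$ into $\kappa$, so the iterate again has height exactly $\kappa$ (and that, on dropping branches, iterates stay of size $<\kappa$). This is the one place inaccessibility is essential, and it depends on the condensation and closure theory of premice of inaccessible height; everything else is bookkeeping around the standard comparison argument, with $\kappa+1$-iterability supplying the wellfounded models and branch choices (cf. \cite{CMIP}).
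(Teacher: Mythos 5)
The paper does not prove this theorem; it quotes it from \cite{CMIP}, so I am comparing your argument with the standard proof there. Your argument has a genuine gap, and it sits exactly at the ``well-known fact'' you put at the foundation. That fact is false for branches of order type $\kappa$: if $N$ has height $\kappa$ and carries a total measure on some $\mu<\kappa$, the $\kappa$-fold linear (normal, non-dropping) iteration by that measure and its images uses only extenders of length $<\kappa$, yet sends $\mu$ to $\kappa$ and produces a last model of height $>\kappa$. Inaccessibility of $\kappa$ gives the counting bound $|i_{0\kappa}(\xi)|\leq\kappa$, not $<\kappa$, once the branch has length $\kappa$; your induction is fine for all stages $\alpha<\kappa$ (there the branches have order type $<\kappa$ and regularity closes the count), but it does not pass to stage $\kappa$, which is precisely where you use it --- both to conclude ``$\M_{\kappa}^{\T}$ and $\M_{\kappa}^{\U}$ have height $\leq\kappa$, hence are comparable'' and to get $\OR^{\M_{\theta}^{\T}}=\kappa$ and $i_{0\theta}^{\T}[\kappa]\subseteq\kappa$ when $\theta=\kappa$. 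Note that the paper itself relies on the negation of your fact: Lemma \ref{lem:Enlarging iteration of weasels} and Case 1 of the proof of Theorem \ref{thm:Main Theorem of Section One} are exactly about the losing side of a comparison having $\OR^{\M_{\Omega}^{\U}}>\Omega$ and $i_{\alpha\Omega}^{\U}(\alpha)=\Omega$ on a club, which your fact would rule out.

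What actually prevents both stage-$\kappa$ models from having height $>\kappa$ (equivalently, both main branches from moving ordinals cofinally up to $\kappa$) is not cardinal arithmetic but the structure of a coiteration, and this is the missing idea. The standard argument (Steel, \cite{CMIP}) takes $X\prec V_{\eta}$ with everything relevant in $X$ and $X\cap\kappa=\alpha<\kappa$; then $\alpha$ indexes a stage lying on both main branches, $i_{0\alpha}^{\T}[\alpha]\cup i_{0\alpha}^{\U}[\alpha]\subseteq\alpha$, $\sup_{\beta<\alpha}\lh(E_{\beta})=\alpha$, and $\crt(i_{\alpha\kappa}^{\T}),\crt(i_{\alpha\kappa}^{\U})\geq\alpha$. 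If the comparison has not yet succeeded (or if both sides move $\alpha$), one shows the first extenders applied past stage $\alpha$ on the two main branches are compatible, contradicting that every extender used in a coiteration is chosen at a point of disagreement and hence does not appear on the other side's sequence. This same analysis yields that at least one side neither drops on its main branch nor moves any ordinal to $\kappa$ or beyond, giving $\OR^{\M_{\theta}^{\T}}=\kappa$ and $i_{0,\theta}^{\T}[\kappa]\subseteq\kappa$ for the winning side (your reduction of clause (1) to non-dropping plus the false fact therefore also needs to be replaced by this argument). The rest of your setup --- least disagreement, strict increase of the $\delta_{\alpha}$, agreement of the stage-$\kappa$ models below $\kappa$, and soundness ruling out a drop on a side whose last model is a proper initial segment of the other --- is correct and matches the standard proof.
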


The next lemmas are basic facts about iteration trees whose proofs
are well-known.
\begin{lem}
\label{lem:weasel lemma for iterate of ordinal height kappa}Let $\kappa$
be a regular and uncountable cardinal and $k\leq\omega$. Let $\M$
be a premouse such that $\OR^{\M}=\kappa$ and let $\T$ be a $k$-maximal
iteration tree on $\M$ such that $\lh(\T)=\kappa+1$, there is no
dropping on $b:=[0,\kappa]_{T}$, and that $i_{0\kappa}^{\T}[\kappa]\subset\kappa$.
Then there is a club $C\subset\kappa$ such that for all $\alpha\in C$,
$i_{0\alpha}^{\T}[\alpha]\subset\alpha$.
\end{lem}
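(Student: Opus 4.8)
The plan is to take $C$ to be the intersection of the main branch $b$ (which will be club in $\kappa$) with the club of closure points of the iteration map $i_{0\kappa}^{\T}\restriction\kappa$, and then to transfer the hypothesis $i_{0\kappa}^{\T}[\kappa]\subset\kappa$ down to each level $\alpha\in C$ by "pulling it back" through the factor map $i_{\alpha\kappa}^{\T}$.

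First I would record the standard preliminaries. Since $\lh(\T)=\kappa+1$ and $\kappa$ is regular, $b\cap\kappa=[0,\kappa)_T$ is club in $\kappa$: it is closed because a cofinal branch of an iteration tree is closed below the length of the tree (if $b\cap\lambda$ is unbounded in a limit $\lambda<\kappa$, then $[0,\lambda)_T=b\cap\lambda$, so $\lambda\in b$), and it is unbounded because $\kappa\in b$ is a limit ordinal. Moreover, since $D^{\T}\cap b=\emptyset$, for every $\alpha\in b$ the maps $i_{0\alpha}^{\T}\colon\M\to\M_{\alpha}^{\T}$ and $i_{\alpha\kappa}^{\T}\colon\M_{\alpha}^{\T}\to\M_{\kappa}^{\T}$ are total and $i_{0\kappa}^{\T}=i_{\alpha\kappa}^{\T}\circ i_{0\alpha}^{\T}$.

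Next, since $\OR^{\M}=\kappa$ and by hypothesis $i_{0\kappa}^{\T}[\kappa]\subset\kappa$, the map $i_{0\kappa}^{\T}\restriction\kappa$ is a strictly increasing function from $\kappa$ to $\kappa$, so its set of closure points $C_{0}=\{\alpha<\kappa:i_{0\kappa}^{\T}[\alpha]\subseteq\alpha\}$ is club in $\kappa$. I would then set $C=C_{0}\cap(b\cap\kappa)$, a club, and check that it works. Fix $\alpha\in C$ and $\gamma<\alpha$. Since $\gamma<\alpha\le\OR^{\M}$, the ordinal $i_{0\alpha}^{\T}(\gamma)$ is defined and lies below $\OR^{\M_{\alpha}^{\T}}$; and since a non-dropping iterate of a premouse of ordinal height $\kappa$ again has ordinal height at least $\kappa$, also $\alpha<\OR^{\M_{\alpha}^{\T}}$. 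Because iteration maps never move an ordinal strictly downward, $i_{\alpha\kappa}^{\T}(\alpha)\ge\alpha$, so by order-preservation of $i_{\alpha\kappa}^{\T}$ any $\xi<\OR^{\M_{\alpha}^{\T}}$ with $i_{\alpha\kappa}^{\T}(\xi)<\alpha$ must satisfy $\xi<\alpha$. Applying this with $\xi=i_{0\alpha}^{\T}(\gamma)$, and using $i_{\alpha\kappa}^{\T}(i_{0\alpha}^{\T}(\gamma))=i_{0\kappa}^{\T}(\gamma)<\alpha$ (which holds as $\alpha\in C_{0}$ and $\gamma<\alpha$), we conclude $i_{0\alpha}^{\T}(\gamma)<\alpha$. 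Hence $i_{0\alpha}^{\T}[\alpha]\subseteq\alpha$ for every $\alpha\in C$.

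I do not expect a genuine obstacle; this is a short reflection/continuity computation. The only points that need (routine) care are: that $i_{\alpha\kappa}^{\T}$ is total, which is exactly where the hypothesis that $b$ is non-dropping is used; that $\alpha$ lies in the domain of $i_{\alpha\kappa}^{\T}$; and the closedness of the main branch in $\kappa$. All three are well-known facts about iteration trees, available from the preceding remarks, so there is nothing substantive left to prove beyond assembling them.
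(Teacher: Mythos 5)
Your proof is correct. The paper deliberately omits a proof of this lemma (it is listed among the ``basic facts about iteration trees whose proofs are well-known''), and your argument --- intersect the main branch $b\cap\kappa$ (club by the tree-order axioms) with the closure points of $i_{0\kappa}^{\T}\restriction\kappa$, then pull the bound back through $i_{\alpha\kappa}^{\T}$ using $\xi\le i_{\alpha\kappa}^{\T}(\xi)$ and the factorization $i_{0\kappa}^{\T}=i_{\alpha\kappa}^{\T}\circ i_{0\alpha}^{\T}$ --- is exactly the standard argument the authors are relying on.
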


\begin{lem}
\label{lem:club of sups}Let $\kappa$ be a regular and uncountable
cardinal and $k\leq\omega$. Let $\M$ be a premouse such that $\OR^{\M}=\kappa$
and let $\T$ be a $k$-maximal iteration tree on $\M$ such that
$\lh(\T)=\kappa+1$ and there is no dropping on $b:=[0,\kappa]_{T}$.
Suppose that for all $\alpha<\kappa$, $\OR^{\M_{\alpha}^{\T}}\leq\kappa$.
Then there is a club $C\subset\kappa$ such that for all $\alpha\in C$,
$\alpha=\sup\{\lh(E_{\beta}^{\T}):\beta<\alpha\}$. 
\end{lem}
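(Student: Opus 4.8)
The plan is to read the club off directly from the increasing sequence of extender lengths, using only the monotonicity of lengths in a $k$-maximal tree together with the hypothesis that every model appearing has ordinal height at most $\kappa$. First I would record the standard facts about $k$-maximal iteration trees: the lengths $\lh(E_\beta^\T)$ are strictly increasing in $\beta$, and $\lh(E_\beta^\T)\le\OR^{\M_\beta^\T}$ for every $\beta$. Strict monotonicity immediately gives $\lh(E_\beta^\T)\ge\beta$ for all $\beta<\kappa$. I would then claim $\lh(E_\beta^\T)<\kappa$ for every $\beta<\kappa$: if $\lh(E_{\beta_0}^\T)=\kappa$ for some $\beta_0<\kappa$, then $\beta_0+1<\kappa$ and, since $\lh(\T)=\kappa+1$, the extender $E_{\beta_0+1}^\T$ is defined with $\lh(E_{\beta_0+1}^\T)>\kappa$; but then $\kappa<\lh(E_{\beta_0+1}^\T)\le\OR^{\M_{\beta_0+1}^\T}\le\kappa$ by hypothesis, a contradiction. (This is the only place the assumption $\OR^{\M_\alpha^\T}\le\kappa$ is used; the non-dropping hypothesis on $b$ plays no role in this lemma but is retained since it is the context of the application.)

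Thus $g\colon\kappa\to\kappa$ defined by $g(\beta):=\lh(E_\beta^\T)$ is a well-defined, strictly increasing function with $g(\beta)\ge\beta$. Since $\kappa$ is regular and uncountable, I would then observe that its set of closure points
\[
C:=\{\alpha<\kappa:\forall\beta<\alpha\ \ \lh(E_\beta^\T)<\alpha\}
\]
is a club in $\kappa$: it is closed because a supremum of closure points of $g$ is again one, and it is unbounded because, given $\xi<\kappa$, the ordinals $\xi_0:=\xi$ and $\xi_{n+1}:=\sup\{g(\beta)+1:\beta<\xi_n\}$ all lie below $\kappa$ (by regularity), so $\sup_n\xi_n<\kappa$ (by uncountable cofinality) belongs to $C$ and is $\ge\xi$. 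Finally, for a limit ordinal $\alpha\in C$ one checks both inequalities: membership of $\alpha$ in $C$ gives $\sup\{\lh(E_\beta^\T):\beta<\alpha\}\le\alpha$, while $\lh(E_\beta^\T)\ge\beta$ for all $\beta<\alpha$ gives $\sup\{\lh(E_\beta^\T):\beta<\alpha\}\ge\sup_{\beta<\alpha}\beta=\alpha$; hence $\alpha=\sup\{\lh(E_\beta^\T):\beta<\alpha\}$. Therefore the intersection of $C$ with the club of limit ordinals below $\kappa$ is the desired club.

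The argument is essentially ordinal bookkeeping, so there is no serious obstacle. The two points that call for a little care are the verification that no extender length reaches $\kappa$, which is exactly where the hypothesis on the ordinal heights of the $\M_\alpha^\T$ enters, and the exclusion of successor ordinals from the final club: a successor $\gamma+1$ can lie in $C$ (for instance when $\lh(E_\beta^\T)=\beta$ for all $\beta\le\gamma$), but then $\sup\{\lh(E_\beta^\T):\beta<\gamma+1\}=\gamma\neq\gamma+1$, so one really must pass to limit ordinals at the end.
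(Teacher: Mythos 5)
Your proof is correct; the paper states this lemma without proof as one of several ``well-known basic facts'', and your argument --- the extender lengths are strictly increasing, satisfy $\beta\le\lh(E_{\beta}^{\T})\le\OR^{\M_{\beta}^{\T}}$, and are forced below $\kappa$ by the hypothesis on the ordinal heights, so the limit closure points of $\beta\mapsto\lh(E_{\beta}^{\T})$ form the desired club --- is exactly the standard one being alluded to. Your side remarks (that the non-dropping hypothesis is not used here, and that successor ordinals must be discarded at the end) are both accurate.
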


\begin{lem}
\label{lem:Enlarging iteration of weasels} Let $\kappa$ be a regular
and uncountable cardinal and $k\leq\omega$. Let $\M$ be a premouse
such that $\OR^{\M}=\kappa$ and let $\T$ be a $k$-maximal iteration
tree on $\M$ such that $\lh(\T)=\kappa+1$. Suppose that $\OR^{\M_{\alpha}^{\T}}=\kappa$
for all $\alpha<\kappa$ and $\OR^{\M_{\kappa}^{\T}}>\kappa$. Then
there is a club of $\alpha$'s such that $i_{\alpha\kappa}^{\T}(\alpha)=\kappa$
and $\crt(i_{\alpha\kappa}^{\T})=\alpha$.
\end{lem}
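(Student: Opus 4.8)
The plan is to exploit the same continuity-at-a-club phenomenon that underlies Lemmas \ref{lem:weasel lemma for iterate of ordinal height kappa} and \ref{lem:club of sups}, but now tracking where the ordinal height $\kappa$ is generated along the main branch $b=[0,\kappa]_T$. First I would fix the branch $b$ and note that since $\lh(\T)=\kappa+1$ with $\kappa$ regular, $b$ is cofinal in $\kappa$; write $i_{\alpha\kappa}^{\T}$ for the branch embeddings. The hypothesis $\OR^{\M_{\kappa}^{\T}}>\kappa$ together with $\OR^{\M_{\alpha}^{\T}}=\kappa$ for $\alpha<\kappa$ tells us that the direct limit strictly exceeds each finite stage, so $\kappa=\sup i_{0\kappa}^{\T}[\kappa]$ but $\kappa\in\ran(i_{0\kappa}^{\T})$ is \emph{not} forced; rather, for each $\alpha\in b$ the ordinal $\OR^{\M_{\alpha}^{\T}}=\kappa$ is sent by $i_{\alpha\kappa}^{\T}$ to $\OR^{\M_{\kappa}^{\T}}>\kappa$, and I want to arrange $i_{\alpha\kappa}^{\T}(\alpha)=\kappa$ and $\crt(i_{\alpha\kappa}^{\T})=\alpha$ on a club.

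The key step is to use Lemma \ref{lem:club of sups} to get a club $C_0\subset\kappa$ of $\alpha$ with $\alpha=\sup\{\lh(E_\beta^{\T}):\beta<\alpha\}$, and to intersect with the (standard) club of $\alpha\in b$ that are closed under the tree order and under the generators, i.e. $\alpha$ such that $[0,\alpha]_T$ is exactly $b\cap\alpha$ and $\alpha=\sup(b\cap\alpha)$. For such $\alpha$ one gets that $\M_\alpha^{\T}$ is the direct limit of the $\M_\xi^{\T}$ for $\xi\in b\cap\alpha$ along the branch embeddings, so $i_{\alpha\kappa}^{\T}$ is continuous at the relevant points and $\crt(i_{\alpha\kappa}^{\T})\geq\alpha$. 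To see $\crt(i_{\alpha\kappa}^{\T})=\alpha$ exactly, I would observe that the next extender used along $b$ above $\alpha$ has critical point $<\alpha$ only if $\alpha$ fails to be closed under generators — excluded on the club — while its critical point cannot exceed $\alpha$ because $\alpha=\OR^{\M_\alpha^{\T}}$ would otherwise be fixed, contradicting $i_{\alpha\kappa}^{\T}(\kappa^{\M_\alpha})=\OR^{\M_\kappa^{\T}}>\kappa$ being generated above $\alpha$. Hence $\crt(i_{\alpha\kappa}^{\T})=\alpha$, and then $i_{\alpha\kappa}^{\T}(\alpha)=i_{\alpha\kappa}^{\T}(\OR^{\M_\alpha^{\T}})=\OR^{\M_\kappa^{\T}}$ restricted through the ordinals gives $\sup i_{\alpha\kappa}^{\T}[\alpha]=\kappa$, and since $\alpha$ is the critical point, $i_{\alpha\kappa}^{\T}(\alpha)$ is the least ordinal moved, which by the previous computation together with $\OR^{\M_\alpha^{\T}}=\kappa^{+\M_\alpha^{\T}}$-type considerations equals $\kappa$.

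The main obstacle I anticipate is the bookkeeping needed to verify $i_{\alpha\kappa}^{\T}(\alpha)=\kappa$ precisely, as opposed to merely $i_{\alpha\kappa}^{\T}(\alpha)\geq\kappa$: one must rule out that $i_{\alpha\kappa}^{\T}(\alpha)$ overshoots $\kappa$. Here I would use that $\kappa=\sup\{\lh(E_\beta^{\T}):\beta<\kappa\}$ (again Lemma \ref{lem:club of sups}, now at $\kappa$ itself, which holds since $\OR^{\M_\alpha^{\T}}=\kappa$ for all $\alpha<\kappa$) so that every ordinal below $\kappa$ in $\M_\kappa^{\T}$ is already in some $\M_\beta^{\T}$ with $\beta<\kappa$, hence in $\ran(i_{\beta\kappa}^{\T})$; combined with $\crt(i_{\alpha\kappa}^{\T})=\alpha$ and a standard argument that the image of the critical point is the sup of the images of ordinals below it — i.e. $i_{\alpha\kappa}^{\T}(\alpha)=\sup\{i_{\alpha\kappa}^{\T}(\xi):\xi<\alpha\}$ when $\alpha$ is a limit of branch-closure points — we conclude $i_{\alpha\kappa}^{\T}(\alpha)=\sup i_{\alpha\kappa}^{\T}[\alpha]\leq\sup i_{0\kappa}^{\T}[\kappa]=\kappa$, and the reverse inequality is immediate since $\alpha$ is moved. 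Intersecting the finitely many clubs produced yields the desired club, completing the proof.
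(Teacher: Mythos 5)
The paper states this lemma without proof (``basic facts\dots whose proofs are well-known''), so there is nothing to compare against; judging your argument on its own terms, it has a genuine gap exactly where the hypothesis $\OR^{\M_{\kappa}^{\T}}>\kappa$ has to do real work. Your club gives $\crt(i_{\alpha\kappa}^{\T})\geq\alpha$ correctly (closure under the lengths $\lh(E_{\beta}^{\T})$, which are bounded by $\kappa$ since each model has height $\kappa$, forces every extender applied past $\alpha$ on the branch to have critical point $\geq\alpha$). But the upgrade to $\crt(i_{\alpha\kappa}^{\T})=\alpha$ and $i_{\alpha\kappa}^{\T}(\alpha)=\kappa$ is not established. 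The critical point in question is $\crt(E_{\gamma}^{\T})$ where $\gamma+1$ is the $T$-successor of $\alpha$ on the main branch, and normality only pins it into the interval $[\alpha,\nu(E_{\alpha}^{\T}))$; ``closure under generators'' does not exclude it landing strictly above $\alpha$. Your attempted contradiction conflates $\alpha$ with $\OR^{\M_{\alpha}^{\T}}$ (the latter is $\kappa$, not $\alpha$), and the ``standard argument that the image of the critical point is the sup of the images of ordinals below it'' is false: if $\crt(i_{\alpha\kappa}^{\T})=\alpha$ then $\sup i_{\alpha\kappa}^{\T}[\alpha]=\alpha$, while $i_{\alpha\kappa}^{\T}(\alpha)$ is strictly larger --- discontinuity at the critical point is the whole phenomenon. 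Likewise $\sup i_{0\kappa}^{\T}[\kappa]=\OR^{\M_{\kappa}^{\T}}>\kappa$ (the branch embedding is cofinal in the ordinals of the last model), not $\kappa$, so your final chain of inequalities does not close. Finally, ``every ordinal below $\kappa$ in $\M_{\kappa}^{\T}$ is already in some $\M_{\beta}^{\T}$, hence in $\ran(i_{\beta\kappa}^{\T})$'' confuses membership in the model with membership in the range of the embedding.

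What is actually needed is the following. Since $\M_{\kappa}^{\T}$ is the direct limit along $b$ and $\kappa\in\OR^{\M_{\kappa}^{\T}}$, fix $\alpha^{*}\in b$ past all drops with $\kappa=i_{\alpha^{*}\kappa}^{\T}(\bar{\kappa})$ for some $\bar{\kappa}<\kappa$, and set $\bar{\kappa}_{\alpha}:=i_{\alpha^{*}\alpha}^{\T}(\bar{\kappa})$ for $\alpha\in b\setminus(\alpha^{*}+1)$, so that $i_{\alpha\kappa}^{\T}(\bar{\kappa}_{\alpha})=\kappa$ and hence $\alpha\leq\crt(i_{\alpha\kappa}^{\T})\leq\bar{\kappa}_{\alpha}$ on your club. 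Then observe that $\ran(i_{\alpha\kappa}^{\T})\cap\kappa=i_{\alpha\kappa}^{\T}[\bar{\kappa}_{\alpha}]$ has cardinality $<\kappa$ and is therefore bounded in $\kappa$ by regularity; let $h(\alpha)$ be its supremum and intersect with the club of closure points of $h$. If $\alpha$ is such a closure point, a limit of points of $b$, and $\bar{\kappa}_{\alpha}>\alpha$, then $\alpha\in\M_{\alpha}^{\T}$ equals $i_{\alpha'\alpha}^{\T}(\zeta)$ for some $\alpha'\in b\cap\alpha$, whence $\alpha\leq i_{\alpha\kappa}^{\T}(\alpha)=i_{\alpha'\kappa}^{\T}(\zeta)<\kappa$ gives an element of $\ran(i_{\alpha'\kappa}^{\T})\cap\kappa$ that is $\geq\alpha>h(\alpha')$, a contradiction. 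This forces $\bar{\kappa}_{\alpha}=\alpha=\crt(i_{\alpha\kappa}^{\T})$ and $i_{\alpha\kappa}^{\T}(\alpha)=\kappa$. Without some argument of this kind (or an equivalent pressing-down), your proof does not go through.
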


For the proof of Lemma \ref{lem:Key Lemma of Section One} we need
a version of Ville's Lemma. Let us first introduce some terminology.
\begin{defn}
Let $M=(\ulcorner M\urcorner,\in^{M},\E^{M},F^{M})$ be a $\mathcal{L}$-structure,
where $\mathcal{L}$ is the language of premice. We call 
\[
\wfp{M}:=\{x\in\ulcorner M\urcorner:\in^{M}\restriction(\trc_{\in^{M}}(x))^{2}\text{ is well-founded}\}
\]
 the well-founded part of $M$, where $\trc_{\in^{M}}(x)$ is the
closure of $\{x\}$ under $\in^{M}$.
\end{defn}

\begin{defn}
We write $"V=L[E]"$ as an abbreviation for the conjunction of the
Axiom of Extensionality, the Axiom of Foundation, and the $\mathcal{L}_{\in,\dot{E}}$-sentence
\[
\forall x\exists\alpha(x\in S_{\alpha}[\dot{E}])\land\forall\alpha\exists x(x\notin S_{\alpha}^{E}),
\]
where $S_{\alpha}[\dot{E}]$ is the refinement of $J[\dot{E}]$-hierachy
as described in \cite{MR3243739} on p. 80. Note that the second conjunct
makes sure that the model is an actual element of the $J$-hierarchy. 
\end{defn}

Note that for any $N$ such that $N\models V=L[E]$ and $N\models\exists\alpha(N\mid\alpha\models\psi)$
for some $\mathcal{L}$-formula $\psi$, there is a least such $\alpha$
and moreover, this can be described internally, as 
\[
N\models\forall\psi(\exists\alpha(N\mid\alpha\models\psi)\implies(\exists\beta(N\mid\beta\models\psi)\land\forall\gamma<\beta(N\mid\gamma\not\models\psi))).
\]

This follows essentially from the Axiom of Foundation. We will leave
this as an exercise for the reader. 

The next lemma is a slight variation of a lemma due to Ville
\begin{lem}
\label{lem:Villes Lemma}Let $M=(\ulcorner M\urcorner,\in^{M},\E^{M})$
be a $\mathcal{L}$-structure such that $M\models V=L[\dot{E}]$.
Suppose that the structure $M$ is not well-founded. Then $(\wfp(M),\in^{M}\restriction\wfp(M)^{2},\E^{M}\cap\wfp(M))\models KP$. 
\end{lem}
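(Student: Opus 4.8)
The plan is to verify the axioms of $KP$ one by one in $W := (\wfp(M), \in^M \restriction \wfp(M)^2, \E^M \cap \wfp(M))$, using the two structural facts available to us: first, that $\wfp(M)$ is transitive (in the sense that it is closed downward under $\in^M$, and that $\in^M$ restricted to it really is the true membership relation after transitive collapse), so that extensionality, foundation, pairing, union, and $\Delta_0$-separation are essentially automatic; and second, that $M$ believes $V = L[\dot E]$, which is what we need for $\Delta_0$-collection (equivalently $\Sigma_1$-collection). Before starting I would fix the convention that we identify $\wfp(M)$ with its transitive collapse, so that $W$ is a genuine transitive $\in$-structure, and note that because $M$ is ill-founded, $\mathrm{OR}^W = \mathrm{OR} \cap \wfp(M) =: \lambda$ is an ordinal with no largest element, i.e. $\lambda$ is a limit ordinal; moreover $\lambda$ is closed under the $S$-hierarchy steps in the sense that $M$'s hierarchy $\langle S^{\dot E}_\alpha \rangle$ has all its levels indexed by ordinals $< \lambda$ lying in $\wfp(M)$, and each such level is itself in $\wfp(M)$.

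The routine axioms come first. Extensionality and Foundation hold in any transitive $\in$-structure. Pairing: given $a, b \in W$, they lie in some $S^{\dot E}_\alpha$ with $\alpha \in \wfp(M)$; since $M$ thinks the $S$-hierarchy is cumulative and closed under pairing at limit-enough stages, $\{a,b\}^M \in S^{\dot E}_{\alpha+\omega}$, say, and $\alpha + \omega \in \wfp(M)$ because $\wfp(M)$ is closed under ordinal successor and hence under adding $\omega$; so $\{a,b\} \in W$. Union is identical. $\Delta_0$-separation: given $a \in W$ and a $\Delta_0$ formula $\varphi$ with parameters in $W$, the set $b = \{x \in a : W \models \varphi(x)\}$ — by $\Delta_0$-absoluteness between $W$ and $M$ (both transitive, $W$ an initial segment of the ordinals of $M$ and downward closed under $\in^M$, so $\Delta_0$ formulas are absolute) equals $\{x \in a : M \models \varphi(x)\}^M$, which $M$ forms by its own separation, and it lies at some bounded level $S^{\dot E}_\alpha$ with $\alpha \in \wfp(M)$ since $a$ does and $b \subseteq a$; hence $b \in W$.

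The heart of the matter, and the step I expect to be the main obstacle, is $\Delta_0$-collection: given $a \in W$ and a $\Delta_0$ formula $\varphi(x,y)$ with parameters in $W$ such that $W \models \forall x \in a\, \exists y\, \varphi(x,y)$, we must find $b \in W$ with $W \models \forall x \in a\, \exists y \in b\, \varphi(x,y)$. The idea is this: for each $x \in a$, because $W \models \exists y\, \varphi(x,y)$, there is a witness $y \in \wfp(M)$; such a $y$ appears at some level $S^{\dot E}_\beta$ of $M$'s hierarchy with $\beta \in \wfp(M)$, and by $\Delta_0$-absoluteness $M \models \varphi(x, y)$, so $M \models \exists \beta\, \exists y \in S^{\dot E}_\beta\, \varphi(x,y)$. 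Now use the internal least-ordinal principle noted just before the lemma: $M$ can define, for each $x \in a$, the least $\beta$ such that $M\models \exists y \in S^{\dot E}_\beta\, \varphi(x,y)$ — call it $\beta_x$ — and then, by replacement/collection inside $M$ applied to the $M$-definable function $x \mapsto \beta_x$ on the set $a$, $M$ sees an ordinal $\beta^* = \sup_{x\in a} \beta_x$ (or any ordinal bounding the $\beta_x$). The crucial claim is that $\beta^* \in \wfp(M)$: this is where ill-foundedness must not leak in. The argument is that each $\beta_x < \lambda$ (it is in $\wfp(M)$, being the least witness-level for a fact that genuinely holds, hence honestly computed), and — this is the delicate point — the function $x \mapsto \beta_x$, restricted to the genuine set $a \in W$, has range a genuine subset of $\lambda$, and one must argue this range is bounded in $\lambda$. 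Here one invokes that $\lambda = \mathrm{OR}^W$ has uncountable cofinality in $V$ is NOT available in general; instead the right move is: suppose $\beta^* \notin \wfp(M)$, i.e. $\beta^*$ is an ill-founded "ordinal" of $M$ above all of $\wfp(M) \cap \mathrm{OR}$; then since $M \models$ "$\beta^*$ is least bounding $\{\beta_x : x \in a\}$", $M$ thinks $\beta^*$ is a limit of the $\beta_x$'s or a successor of one of them — in the successor case some $\beta_x$ is cofinal, contradicting $\beta_x \in \wfp(M)$ while $\beta^* \notin \wfp(M)$; in the limit case, $M$'s function $x \mapsto \beta_x$ maps the genuine set $a$ cofinally into $\beta^*$, but the real range $\{\beta_x : x\in a\} \subseteq \wfp(M)$, and one derives from the $M$-leastness of $\beta^*$ and a descent argument that there is no such ordinal strictly above $\wfp(M)\cap\mathrm{OR}$ and minimal — more carefully, one replaces $\beta^*$ by $\beta^{**} := $ the $M$-least ordinal above all $\beta_x$, then by $\Delta_0$-absoluteness of "is an ordinal" and the fact that $\wfp(M)\cap\mathrm{OR}$ is an initial segment of $M$'s ordinals with no sup in $M$ lying in $\wfp(M)$, one checks $\beta^{**}$ cannot be $M$-least unless it already lies in $\wfp(M)$. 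Granting $\beta^* \in \wfp(M)$, set $b := S^{\dot E}_{\beta^*} \in W$; then for each $x \in a$ there is $y \in S^{\dot E}_{\beta_x} \subseteq b$ with $M \models \varphi(x,y)$, hence with $W \models \varphi(x,y)$ by absoluteness, and $b$ works. Finally I would remark that infinity holds because $\lambda$ is a limit ordinal so $\omega \in W$, completing the verification that $W \models KP$.
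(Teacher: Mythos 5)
Your proof is correct and follows essentially the same route as the paper's: the routine axioms come from the transitivity of $\wfp(M)$ together with the internal hierarchy, and $\Sigma_0$-collection comes from the overspill observation that every ill-founded ordinal of $M$ bounds all the (genuinely well-founded) witness levels, so the internally least such bound must already lie in $\wfp(M)$, since the ill-founded ordinals of $M$ have no least element. The one step you should excise is the appeal to ``replacement/collection inside $M$'': $M$ is only assumed to satisfy $V=L[\dot E]$, i.e.\ Extensionality, Foundation and the hierarchy axiom, and no collection schema is available there (if it were, the lemma would be almost immediate) --- but your own fallback via $\beta^{**}$, the $M$-least ordinal bounding all the $\beta_x$, is exactly the paper's argument and makes the detour through $\sup_{x\in a}\beta_x$ unnecessary.
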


\begin{proof}
As $(\wfp(M),\in^{M}\restriction\wfp(M)^{2})$ satisfies the Axiom
of Extensionality, we may identify $(\wfp(M),\in^{M}\restriction\wfp(M)^{2},\E^{M}\cap\wfp(M))$
with its transitive collapse. The Axioms of Extensionality and Foundation
hold in $(\wfp(M),\in\restriction\wfp(M)^{2},\E^{M}\cap\wfp(M))$,
since $\wfp(M)$ is transitive.

Note that for $x\in M$ such that $\{z:z\in^{M}x\}\subset\wfp(M)$,
$x\in\wfp(M)$. Moreover, $V=L[E]$ implies the axiom of Pairing and
Union. Thus, we have that Pairing and Union hold in $(\wfp(M),\in\restriction\wfp(M)^{2},\E^{M}\cap\wfp(M))$. 

By the absoluteness of $\Sigma_{0}$-formulae, we have that for every
$\Sigma_{0}$-formula $\varphi$ in the language $\mathcal{L}$ and
$x_{1},...,x_{n}\in\wfp(M)$, 
\[
M\models\varphi(x_{1},...,x_{n})\iff(\wfp(M),\in\restriction\wfp(M)^{2},\E^{M}\cap\wfp(M))\models\varphi(x_{1},...,x_{n}).
\]
To show that $\Sigma_{0}$-comprehension holds in $(\wfp(M),\in\restriction\wfp(M)^{2},\E^{M}\cap\wfp(M))$,
note that $\Sigma_{0}$-comprehension holds in $M$, since $V=L[E]$.
Thus, for every $\Sigma_{0}$-formula and $x,p\in\wfp(M)$ we have
that $M\models\exists y\forall z(z\in y\leftrightarrow\varphi(z,p)\land z\in x)$.
But since $x\in\wfp(M)$, $y\subset\wfp(M)$ by our previous remark,
$y\in\wfp(M)$. By the absoluteness of $\Sigma_{0}$-formulae, $\Sigma_{0}$-comprehension
holds in $(\wfp(M),\in\restriction\wfp(M)^{2},\E^{M}\cap\wfp(M))$.

It remains to see that $\Sigma_{0}$-bounding holds in $(\wfp(M),\in\restriction\wfp(M)^{2},\E^{M}\cap\wfp(M))$.
Suppose that for $x,p\in\wfp(M)$ and a $\Sigma_{0}$-formula $\varphi$
it holds that $(\wfp(M),\in\restriction\wfp(M)^{2},\E^{M}\cap\wfp(M))\models\forall y\in x\exists z\varphi(z,y,p)$.
Since we assumed that $M$ is not well-founded, we have that $\wfp(M)\neq M$.
But then for every $\alpha\in\OR^{M}\setminus\OR^{\wfp(M)}$ it holds
that $M\models\forall y\in x\exists z\in J_{\alpha}^{M}\varphi(z,y,p)$
by the upwards absoluteness of $\Sigma_{1}$-formulae. However, since
$M\models V=L[E]$ there is a least such $\alpha\in\OR^{M}$ such
that $\alpha\in\wfp(M)$, since for all $\alpha\in\OR^{M}\setminus\OR^{\wfp(M)}$
it holds that $M\models\forall y\in x\exists z\in J_{\alpha}^{M}\varphi(z,y,p)$.
Thus, $(\wfp(M),\in\restriction\wfp(M)^{2},\E^{M}\cap\wfp(M))$ models
$\Sigma_{0}$-collection.
\end{proof}
We need the following version of the $\Sigma_{1}^{1}$-Bounding Theorem
in order to prove Lemma \ref{lem:Key Lemma of Section One}
\begin{thm}
\label{thm:Sigma One One Bounding}Let $x\in^{\omega}\omega$ and
suppose that $A\subset\WO$ is $\Sigma_{1}^{1}(x)$-definable, where
$\WO\subset^{\omega}\omega$ is the set of reals coding well-orderings.
Then 
\[
\sup\{<_{x}:x\in A\}<\omega_{1}^{\ck}(x).
\]
\end{thm}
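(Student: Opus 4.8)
The statement is the effective, oracle-relativized form of the classical Lusin--Sierpi\'nski boundedness theorem, and my plan is to deduce it in two moves: first the countable bound $\sup\{<_{y}:y\in A\}<\omega_{1}$, obtained from the fact that $\WO$ is not analytic, and then a sharpening of $\omega_{1}$ to $\omega_{1}^{\ck}(x)$ by transporting $A$ into Kleene's set $\mathcal O$ of ordinal notations. Throughout write $<_{y}$ for the order type of the well-order coded by a real $y\in\WO$ (as in the statement), and $z\preceq y$ for ``$z$ codes a linear order that order-embeds into the one coded by $y$'', a $\Sigma_{1}^{1}$ relation. For the first move, suppose $\sup\{<_{y}:y\in A\}=\omega_{1}$. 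Since then every countable well-order order-embeds into some member of $A$, we get
\[
\WO=\bigl\{z:z\text{ codes a linear order}\ \wedge\ \exists y\,(y\in A\wedge z\preceq y)\bigr\},
\]
where $\supseteq$ holds because an order-embedding into a well-order witnesses well-foundedness. The right-hand side is $\Sigma_{1}^{1}(x)$, so $\WO$ would be $\Sigma_{1}^{1}(x)$; but $\WO$ is $\Pi_{1}^{1}(x)$-complete (reduce any $\Pi_{1}^{1}(x)$ set to it via the Kleene--Brouwer ordering of the tree in its normal form), hence not $\Sigma_{1}^{1}(x)$ by the hierarchy theorem --- a contradiction.

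For the second move, recall the standard recursive, rank-preserving reduction of $\WO$ to $\mathcal O$: a recursive function $f$ with $z\in\WO\iff f(z)\in\mathcal O$ and, for $z\in\WO$, $|f(z)|_{\mathcal O}$ equal to $<_{z}$ (one builds $f(z)$ by effective transfinite recursion along the order coded by $z$; if that order is ill-founded the construction produces a non-notation, independently of any oracle, so in fact $f(z)\in\mathcal O^{x}$ iff $z\in\WO$ as well). Since $A\subseteq\WO$, the set $f[A]=\{f(y):y\in A\}$ is a $\Sigma_{1}^{1}(x)$ subset of $\mathcal O^{x}$, and rank-preservation gives $\sup\{<_{y}:y\in A\}=\sup\{|a|_{\mathcal O^{x}}:a\in f[A]\}$. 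It remains to invoke the boundedness theorem for $\mathcal O^{x}$: every $\Sigma_{1}^{1}(x)$ subset $S$ of $\mathcal O^{x}$ is bounded, i.e.\ $S\subseteq\{a:|a|_{\mathcal O^{x}}<\beta\}$ for some $\beta<\omega_{1}^{\ck}(x)$; applied to $S=f[A]$ this yields $\sup\{<_{y}:y\in A\}<\omega_{1}^{\ck}(x)$. Note this second move is already self-contained; the first is included only for orientation.

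The crux --- and the step I expect to be the genuine obstacle --- is the boundedness theorem for $\mathcal O^{x}$ quoted at the end: the topological argument of the first move only gives a bound below $\omega_{1}$, and pushing it down to $\omega_{1}^{\ck}(x)$ is an essentially recursion-theoretic phenomenon. Its proof uses the recursion theorem: from a recursive-in-$x$ enumeration of $S$ one manufactures a putative notation for $\sup_{a\in S}|a|_{\mathcal O^{x}}$, and were $S$ unbounded this would be a notation for $\omega_{1}^{\ck}(x)$, which is impossible. As this is entirely classical, in the write-up I would cite it (e.g.\ from Sacks's \emph{Higher Recursion Theory}) rather than reprove it. A model-theoretic alternative --- using that a genuinely ill-founded set-sized tree cannot be assigned an ordinal rank inside the least $x$-admissible set $L_{\omega_{1}^{\ck}(x)}[x]\models KP$, so that $\Sigma_{1}^{1}$ truths reflect there and the argument of the first move formalizes in $KP$ --- is available in the spirit of the rest of the paper, but its delicate point is precisely the absoluteness ensuring that $A$ contains no well-order of order type $\geq\omega_{1}^{\ck}(x)$, which again reduces to the same effective input.
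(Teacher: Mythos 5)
Your first move is correct but, as you note, only yields a bound below $\omega_{1}$. The genuine gap is in the second move: the reduction you invoke does not exist. There is no recursive (indeed, no) function $f$ with $z\in\WO\iff f(z)\in\mathcal{O}$ and $|f(z)|_{\mathcal{O}}=\otp(z)$ for $z\in\WO$, because the ranks realized in $\mathcal{O}$ are exactly the ordinals below $\omega_{1}^{\ck}$, whereas $\WO$ contains codes for every countable ordinal; the same obstruction rules out a rank-preserving map into $\mathcal{O}^{x}$, since $\mathcal{O}^{x}$ carries no notation for any ordinal $\geq\omega_{1}^{\ck}(x)$. What effective transfinite recursion along the order coded by $z$ actually produces is a notation in $\mathcal{O}^{z}$ (the recursion must consult $z$ as an oracle to locate predecessors), so all you get is $f[A]\subseteq\bigcup_{y\in A}\mathcal{O}^{y}$, and the boundedness theorem for $\mathcal{O}^{x}$ does not apply. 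Your parenthetical only handles the direction $z\notin\WO\Rightarrow f(z)\notin\mathcal{O}^{x}$; the direction you actually need, namely $y\in A\Rightarrow f(y)\in\mathcal{O}^{x}$ with $|f(y)|_{\mathcal{O}^{x}}=\otp(y)$, is essentially equivalent to the conclusion of the theorem, so assuming it is circular. (Contrary to your closing assessment, the citable boundedness theorem for $\mathcal{O}^{x}$ is not the obstacle; the transfer from $\WO$ to $\mathcal{O}^{x}$ is.)

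The repair is to run the reduction in the opposite direction: embed the $x$-recursive well-orders into members of $A$, rather than members of $A$ into $\mathcal{O}^{x}$. Let $W^{x}\subseteq\omega$ be the set of indices $e$ such that $\{e\}^{x}$ is the characteristic function of a well-ordering of a subset of $\omega$. Every ordinal below $\omega_{1}^{\ck}(x)$ is the order type of such an order, and $W^{x}$ is $\Pi_{1}^{1}(x)$-complete among subsets of $\omega$, hence not $\Sigma_{1}^{1}(x)$. If $A$ were unbounded in $\omega_{1}^{\ck}(x)$, then $e\in W^{x}$ iff $\{e\}^{x}$ codes a linear order and $\exists y\,(y\in A\wedge\{e\}^{x}\preceq y)$ --- exactly your first-move equivalence restricted to $x$-recursive codes --- and the right-hand side is $\Sigma_{1}^{1}(x)$, a contradiction. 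The relativization matters: arbitrary real codes give only $\omega_{1}$, and plain recursive codes give only that the lightface set $W$ is $\Sigma_{1}^{1}(x)$, which is no contradiction (consider $x=\mathcal{O}$). For comparison, the paper does not prove the theorem but cites it as an effective refinement of the Kunen--Martin theorem; either that route or the corrected argument above is acceptable.
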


The proof of Theorem \ref{thm:Sigma One One Bounding} is basically
a refinement of the Kunen-Martin Theorem, which may be found in \cite{dst}
on page 75. 

Note that it is a standard fact (or sometimes the definition) that
$\omega_{1}^{\ck}(x)$ is least (infinite) ordinal $\alpha$ such
that $L_{\alpha}(x)\models KP$.

The key lemma in proving Theorem \ref{thm:Main Theorem of Section One}
is the following
\begin{lem}
\label{lem:Key Lemma of Section One}Let $M$ be a premouse such that
$\delta$ is a regular, uncountable cardinal of $M$ and $\delta^{+}$
exists in $M$. Suppose that $M$ is $(0,\delta+1)$-iterable, and
there is no initial segment of $M$ which models $KP$ and has a Woodin
cardinal. Let $\U$ be a $0$-maximal iteration tree on $M$ of such
that
\begin{itemize}
\item $\lh(\U)=\delta+1$,
\item $\sup\{\lh(E_{\alpha}^{\U}):\alpha<\delta\}=\delta$, 
\item $b\cap D^{U}=\emptyset$, where $b:=[0,\delta]_{U}$, i.e. $\U$ is
non-dropping on its main branch, and
\item $i_{0\delta}^{\U}(\delta)=\delta$.
\end{itemize}
Then $i_{0\delta}^{\U}(\delta^{+M})=\delta^{+M}$.
\end{lem}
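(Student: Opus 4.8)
The plan is to argue by contradiction. Suppose $i:=i_{0\delta}^{\U}$ does not fix $\delta^{+M}$; since iteration embeddings are monotone on the ordinals this means $i(\delta^{+M})>\delta^{+M}$. Write $\lambda:=\delta^{+M}$. Because $\lambda$ is a (necessarily regular) cardinal of $M$ and $i(\delta)=\delta$, elementarity gives $i(\lambda)=(\delta^{+})^{\M_\delta^{\U}}$, so the assumption says exactly that $\lambda$ is collapsed to $\delta$ inside $\M_\delta^{\U}$; fix a surjection $g\colon\delta\to\lambda$ lying in $\M_\delta^{\U}$. Note that, as $\lambda$ is a regular cardinal of $M$, $M\mid\lambda$ is $H_\lambda^M$ and in particular models $KP$; so, by the hypothesis on $M$, $\delta$ is not Woodin in $M\mid\lambda$, nor in any admissible initial segment of $M$, and — since $i$ is elementary, the main branch being non-dropping — it suffices to produce an admissible $N\unlhd\M_\delta^{\U}$ with $\delta<\OR^N$ in which $\delta$ is Woodin: pulling the resulting statement back along $i$ then contradicts the hypothesis. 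Finally, passing to the transitive collapse of a countable elementary substructure of a large $V_\theta$ preserves every hypothesis, so we may assume $M$, and hence $\delta,\lambda,\U,b:=[0,\delta]_U$ and $\M_\delta^{\U}$, are countable; fixing a real $x$ coding $M$ and $\U$, the ordinals $\delta$ and $\lambda$ lie below $\omega_1^{\ck}(x)$, the least $\alpha$ with $L_\alpha(x)\models KP$.

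Next I would record the shape of the main branch. By Lemmas \ref{lem:weasel lemma for iterate of ordinal height kappa} and \ref{lem:club of sups} there is a club $C\subseteq\delta$ so that for $\alpha\in C$ one has $\alpha\in b$, $\alpha=\sup\{\lh(E_\beta^{\U}):\beta<\alpha\}$, $\crt(i_{\alpha\delta}^{\U})\ge\alpha$ and hence $i_{0\alpha}^{\U}(\delta)=\delta$ and $(\delta^{+})^{\M_\alpha^{\U}}=i_{0\alpha}^{\U}(\lambda)$; and since every extender used in $\U$ has length $<\delta$ while $(\delta^{+})^{\M_\alpha^{\U}}$ is regular and $>\delta$ in $\M_\alpha^{\U}$, all branch embeddings are continuous at the ordinals $(\delta^{+})^{\M_\alpha^{\U}}$. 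One should be aware that, in contrast with the linearly iterable situation treated by Steel, it can happen that $i_{0\alpha}^{\U}(\lambda)=\lambda$ for \emph{every} $\alpha<\delta$ and still $i(\lambda)>\lambda$ — the collapse can be a pure direct-limit effect with no single extender responsible — so one cannot simply localize it. Thus $\M_\delta^{\U}$ is the direct limit along $b$ of the $\M_\alpha^{\U}$, each of which is a countable iterate of $M$ (hence recursive in $x$ together with a wellorder of length $<\delta$), and $\T:=\U\restriction\delta$ is a normal tree of length $\delta$ on $M$ whose extenders have lengths cofinal in $\delta$, with common part $\M(\T)=\M_\delta^{\U}\mid\delta$ and with $b$ a cofinal branch.

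The heart of the argument, and the step I expect to be the only real obstacle, is to produce the admissible $N$ in which $\delta$ is Woodin. The guiding idea is that a tree $\T$ on $M$ which collapses $\delta^{+M}$ while fixing $\delta$, using only extenders with critical points cofinal in $\delta$, is itself a certificate that $\delta$ reflects — i.e.\ is Woodin — over a suitable admissible level, with the extenders of $\T$ (and their tails) furnishing the reflecting family. To extract this I would run the $Q$-structure analysis of $\T$ under the hypothesis: if $\delta$ failed to be Woodin over the relevant admissible initial segments, $\T$ would admit a $Q$-structure pinning its cofinal branch, a ``small'' mouse whose ordinal height is bounded below $\omega_1^{\ck}(x)$ by $\Sigma_1^1$-Bounding (Theorem \ref{thm:Sigma One One Bounding}) — applied to the $\Sigma_1^1(x)$ set of codes for the ordinal heights of the iterates $\M_\alpha^{\T}$ — so that $b$, and with it $i$ and the structure $i(M\mid\lambda)=\M_\delta^{\U}\mid i(\lambda)$, would be computable inside $L_{\omega_1^{\ck}(x)}(x)$; one then uses the existence of such an $i$ with $i(\delta)=\delta$, $i(\lambda)>\lambda$ and $M\mid\lambda\models KP$ to build, over the tails of $\T$, a structure $S$ with $S\models "V=L[\dot{E}]"$ in which $\delta$ is Woodin and whose height lies above $\lambda$. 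Since the branch is being driven ``one step too far'' in order to collapse $\lambda$, one expects $S$ to be ill-founded; then Ville's Lemma \ref{lem:Villes Lemma} gives $\wfp(S)\models KP$, a further application of $\Sigma_1^1$-Bounding keeps $\OR^{\wfp(S)}$ below $\omega_1^{\ck}(x)$, and iterability identifies $\wfp(S)$ with a genuine admissible initial segment $N\unlhd\M_\delta^{\U}$ with $\delta<\OR^N$ over which $\delta$ is Woodin — contradicting the hypothesis via the first paragraph. The substance, and the difficulty, lies in (i) choosing how far to drive $S$ so that its well-founded part is at once admissible and tall enough to witness Woodinness, and (ii) verifying the reflection instances, i.e.\ genuinely converting ``$\delta^{+M}$ is collapsed by $\T$'' into ``$\delta$ is Woodin over $N$''; the material of the first two paragraphs is routine.
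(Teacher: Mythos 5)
There is a genuine gap, and you have in effect flagged it yourself: the entire content of the lemma is concentrated in the step you defer, namely converting ``$\delta^{+M}$ is collapsed by $\U$'' into an admissible $N\unlhd\M_{\delta}^{\U}$ above $\delta$ in which $\delta$ is Woodin. No such conversion occurs in the paper's proof, and it is not clear one exists. The hypothesis ``no initial segment of $M$ models $KP$ and has a Woodin'' is not contradicted by manufacturing a Woodin witness out of the collapse; it is used in the opposite direction: it guarantees that every putative $Q$-structure-guided tree in a certain family $\bar{A}$ has a well-founded last model (this is exactly where Ville's Lemma and the placement of the $Q$-structure relative to $\wfp(\M_{\delta}^{\T})$ enter), so that $\Sigma_{1}^{1}$-Bounding applies to the $\Sigma_1^1(x)$ set of ordinal heights of those last models. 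The final contradiction is purely one of boundedness of ordinal heights, not of Woodinness, and your proposal never reaches it.

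Your countability reduction also removes the only available source of that contradiction. If you collapse a hull of $V_\theta$ and let $x$ code all of $M$ and $\U$, then $\omega_{1}^{\ck}(x)>\OR^{M}>\lambda$, so a bound of the form ``heights of last models are $<\omega_{1}^{\ck}(x)$'' is compatible with $i(\xi)>\lambda$ and yields nothing. The paper instead forces with $\col(\omega,\delta)$ over $M$; this poset has the $\delta^{+M}$-c.c., so $\delta^{+M}=\omega_{1}^{M[g]}$ survives, and the real $x$ is chosen to code only $M\mid\xi+\omega$ for a carefully chosen $\xi<\delta^{+M}$ with $\rho_{\omega}^{M\mid\xi}=\delta$ and $i_{0\delta}^{\U}(\xi)>\delta^{+M}$ (such $\xi$ exist by continuity of $i_{0\delta}^{\U}$ at $\delta^{+M}$), so that $x\in M[g]$ and hence $\omega_{1}^{\ck}(x)<\omega_{1}^{M[g]}=\delta^{+M}$. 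One then replaces $\U$ by the tree $\U^{*}$ re-based on $M\mid\xi+\omega$, checks $i^{\U^{*}}(\xi)=i^{\U}(\xi)$ (every $f:\kappa\rightarrow\xi$ in $M$ already lies in $M\mid\xi+\omega$, using the surjection $\delta\rightarrow\xi$ in $J(M\mid\xi)$ and the regularity of $\delta$), verifies $\U^{*}\in\bar{A}$, and concludes $\OR^{\M_{\delta}^{\U^{*}}}\geq i^{\U^{*}}(\xi)>\delta^{+M}$ against the $\Sigma_1^1$-Bounding estimate $\OR^{\M_{\delta}^{\U^{*}}}<\omega_{1}^{\ck}(x)<\delta^{+M}$. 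The choice of the truncation level $\xi$, the passage to $\U^{*}$, and the chain-condition computation of $\omega_{1}^{M[g]}$ are all absent from your write-up and cannot be recovered from the hull you take; without them, and without the missing ``collapse implies Woodin'' step, the argument does not close.
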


\begin{proof}
Suppose for the sake of contradiction that there is an iteration tree
$\U$ of length $\lh(\U)=\delta+1$ on $M$ such that $\sup\{\lh(E_{\alpha}^{\U}):\alpha<\delta\}=\delta$,
$b\cap D^{U}=\emptyset$, $i_{0\delta}^{\U}(\delta)=\delta$, and
$i_{0\delta}^{\U}(\delta^{+M})>\delta^{+M}$. Let $\xi<\delta^{+M}$
be such that $i_{0\delta}^{\U}(\xi)>\delta^{+M}$ and $\rho_{\omega}^{M\mid\xi}=\delta$.
Note that such $\xi$ exists since $i_{0\delta}^{\U}$ is continuous
at $\delta^{+M}$ , as $\delta^{+M}$ it is not of measurable cofinality
in any of the models of $\U$ and $\U$ is $0$-maximal. Moreover,
it is a standard fact that the ordinals $\eta$ such that $\rho_{\omega}^{M\mid\eta}=\delta$
are cofinal in $\delta^{+M}$. We also assume that $\xi>\OR^{Q}$,
where $Q\lhd M$ is least such that $J(Q)\models"\delta\text{ is not Woodin}"$.
Note that $Q$ exists and $\OR^{Q}<\delta^{+M}$, since there is no
initial segment of $M$ which models $KP$ and has $\delta$ as a
Woodin.

Let $g\subset\col(\omega,\delta)$ be $V$-generic. In particular,
$g$ is also $M$-generic. Let $x\in M[g]\cap^{\omega}\omega$ code
$M\mid\xi+\omega$ , i.e. $x$ codes $\in\cap(M\mid\xi+\omega)^{2}$
and the set of ordinals coding $\E^{M\mid\xi+\omega}$.

Let us define in $V[g]$ the set $\bar{A}$ such that $\T\in\bar{A}$
iff
\begin{enumerate}
\item $\T$ is a $0$-maximal normal putative iteration tree on $M\mid\xi+\omega$
such that $\lh(\T)=\delta+1$
\item $\T$ is non-dropping on its main branch,
\item $\sup\{\lh(E_{\alpha}^{\T}):\alpha<\delta\}=\delta$,
\item $i_{0\delta}^{\T}(\delta)=\delta$, thus in particular, $\M_{\delta}^{\T}\mid\delta\in\wfp(\M_{\delta}^{\T})$,
and
\item $\T\restriction\delta$ is guided by $Q$-structures.
\end{enumerate}
\begin{claim*}
If $\T\in\bar{A}$, then there is a $Q$-structure for $\T\restriction\delta$
in $V[g]$, i.e. there is $\alpha<\OR$ such that $J_{\alpha}(\M(\T\restriction\delta)\models"\delta=\delta(\T\restriction\delta)\text{ is not Woodin}"$.
\end{claim*}
\begin{proof}
Suppose that there is no $Q$-structure in $V[g]$. This means that
for all $\alpha<\OR$, $J_{\alpha}(\M(\T\restriction\delta))\models"\delta\text{ is Woodin}"$.
Let $\alpha$ be least such that $J_{\alpha}(\M(\T\restriction\delta))\models KP\land"\delta\text{ is Woodin}"$.
Since $\delta$ is regular in $L(\M(\T\restriction\delta))$, there
is $X\prec J_{\alpha}(\M(\T\restriction\delta))$ such that $X\cap\delta\in\delta$.
Let $N$ be the transitive collapse of $X$. Note that since $\M(\T\restriction\delta)=\bigcup_{\alpha\in[0,\delta)_{T}}\M_{\alpha}^{\T}\mid\lh(E_{\alpha}^{\T})$
and $[0,\delta]_{T}\cap D^{T}=\emptyset$, there is no initial segment
of $\M(\T\restriction\delta)$ which models $KP$ and has a Woodin
by elementarity. It follows that $N\lhd\M(\T\restriction\delta)$.
But $N$ models $KP$ and has a Woodin and for some $\alpha\in[0,\delta)_{T}$,
$N\lhd\M_{\alpha}^{\T}$.Contradiction!
\end{proof}
Since for $\T\in\bar{A}$ there is a $Q$-structure for $\T\restriction\delta$
and $\sup\{\lh(E_{\alpha}^{\T}):\alpha<\delta\}=\delta$ (and thus
$Q$-structures for $\T\restriction\alpha$ with $\alpha<\delta$
are of ordinal height less than $\delta$) it follows by the usual
absoluteness arguments that there is a unique cofinal  well-founded
branch $b$ for $\T\restriction\delta$ in $V[g]$\@.

However, at this point we do not know whether for $\T\in\bar{A}$,
the unique cofinal well-founded branch of $\T\restriction\delta$
was actually chosen, i.e. whether $\T$ is an iteration tree. This
is verified in the next Claim.
\begin{claim*}
If $\T\in\bar{A}$, then $\M_{\delta}^{\T}$ is well-founded. 
\end{claim*}
\begin{proof}
Suppose otherwise. Let $\T\in\bar{A}$ be such that $\M_{\delta}^{\T}$
is ill-founded. By 4. we have that $\M_{\delta}^{\T}\mid\delta\in\wfp(\M_{\delta}^{\T})$.
As we are assuming that $\M_{\delta}^{\T}$ is ill-founded, we may
apply Lemma \ref{lem:Villes Lemma} and get that $(\wfp(\M_{\delta}^{\T}),\in\restriction\wfp(\M_{\delta}^{\T})^{2},\E^{\M_{\theta}^{\T}}\cap\wfp(\M_{\delta}^{\T}))\models KP$.
To arrive at a contradiction we distinguish two cases. 

The first case is that $Q(c,\T\restriction\delta)\lhd\wfp(\M_{\delta}^{\T})$,
where $c=[0,\delta]_{T}$. Let $b$ be the unique, cofinal, and well-founded
branch $b$ for $\T\restriction\delta$ in $V[g]$\@. By $1$-smallness
we have that $Q(c,\T\restriction\delta)=Q(b,\T\restriction\delta)$.
But by the Zipper Lemma we have that $\delta$ is Woodin in $J(Q(b,\T\restriction\delta))$.
Contradiction!

The second case is that the $Q$-structure is not contained in $\wfp(\M_{\delta}^{\T})$.
In this case we have that for every proper initial segment of $\wfp(\M_{\delta}^{\T})$,
$\M_{\delta}^{\T}$ thinks that $\delta$ is Woodin in that segment.
But this means that $\delta$ is Woodin in $\wfp(\M_{\delta}^{\T})$.

Let $b$ be unique, cofinal, and well-founded branch $b$ for $\T\restriction\delta$
in $V[g]$. By $1$-smallness $Q(b,\T\restriction\delta)=J_{\alpha}(\M(\T\restriction\delta))$
for some $\alpha\in\OR$. Moreover, since we picked $\xi$ greater
than $\OR^{Q}$, we will have that $Q(b,\T\restriction\delta)\lhd\M_{b}^{\T\restriction\delta}$.

Note that by the elementarity of $i_{0\delta}^{\T}$, $\M_{\delta}^{\T}=\M_{c}^{\T\restriction\delta}$
thinks that there is no initial segment which models $KP$ and has
$\delta$ as a Woodin. Suppose for the sake of contradiction that
$\wfp(\M_{\delta}^{\T})\neq J_{\beta}(\M(\T\restriction\delta))$
for some $\beta\in\OR$. This means that there is some extender indexed
at some $\alpha\in[\delta,\OR^{\wfp(\M_{\delta}^{\T})})$. Let $\alpha$
be the least such and let $E$ be the extender indexed there. As $\M_{\delta}^{\T}$
is $1$-small and therefore $\wfp(\M_{\delta}^{\T})$ is $1$-small,
it is easy to see that $\crt(E)\geq\delta$. But this means that $\crt(E)^{+\M_{\delta}^{\T}\mid\alpha}\in\wfp(\M_{\delta}^{\T})$.
But this gives us a lot of models $KP$ below $\alpha$ which have
$\delta$ as a Woodin cardinal. Contradiction! Thus, $\wfp(\M_{\delta}^{\T})=J_{\beta}(\M(\T\restriction\delta))$
for some $\beta\in\OR$.

But as $J_{\beta}(\M(\T\restriction\delta))\models KP\land"\delta\text{ is Woodin}"$,
we must have that $\beta\leq\alpha$. Thus, $J_{\beta}(\M(\T\restriction\delta))\lhd\M_{b}^{\T\restriction\delta}$.
But this contradicts that $M$ does not have a an initial segment
which models $KP$ and has a Woodin!
\end{proof}
The set $\bar{A}$ gives rise to the set $A=\{\OR^{\M_{\infty}^{\T}}:\T\in\bar{A}\}$.
Note that $A$ is a set of countable ordinals in $V[g]$, since the
trees are countable in $V[g]$ and also $M\mid\xi+\omega$ is countable
in $V[g]$. Thus, we may code $A$ as a set of reals $A^{*}$ by setting
$A^{*}:=\{x\in\WO:\otp(R_{x})\in A\}$, where $\WO$ is the complete
$\Pi_{1}^{1}$-set of reals coding well-orders and $R_{x}$ is the
relation coded by $x$.
\begin{claim*}
$A^{*}$ is $\Sigma_{1}^{1}(x)$.
\end{claim*}
\begin{proof}
The main difficulty is in expressing in a $\Sigma_{1}^{1}(x)$-fahsion
that for $\alpha<\delta$, $\M_{\alpha}^{\T}$ is well-founded. Fix
some $\alpha<\delta$. Note that if $\alpha$ is a successor, then
$\M_{\alpha}^{\T}$ is always well-founded, since $M$ is iterable
and we may assume inductively that $\U\restriction\alpha$ is guided
by $Q$-structures. Suppose that $\alpha<\delta$ is a limit. Since
$\sup\{\lh(E_{\alpha}^{\T}):\alpha<\delta\}=\delta$, we have that
$\delta(\T\restriction\alpha)<\delta$. Note that $\lh(E_{\alpha}^{\T})>\delta(\T\restriction\alpha)$
and $\M_{\alpha}^{\T}$ is $1$-small below $\delta$. Thus, $Q(b,\T\restriction\alpha)$,
where $b=[0,\alpha]_{\T}$, must exist and $Q(b,\T\restriction\alpha)\lhd\M_{\alpha}^{\T}\mid\lh(E_{\alpha}^{\T})$.
Thus, the well-foundedness of $Q(b,\T\restriction\alpha)$ can be
expressed via the existence of an isomorphism between the ordinals
of $Q(b,\T\restriction\alpha)$ and an initial segment of the ordinals
of the model coded by $x$. This is $\Sigma_{1}^{1}(x)$.
\end{proof}
Let $\U^{*}$ be the tree $\U$ considered on $M\mid\xi+\omega$.
Note that since $\sup\{\lh(E_{\alpha}^{\U}):\alpha<\delta\}=\delta$
and $\delta$ is a regular cardinal of $M$ we have that $D^{U}=D^{U^{*}}$
and $\lh(\U)=\lh(\U^{*})$. Moreover, we also have that for all $\alpha<\delta$,
$\M_{\alpha}^{\U^{*}}\mid i_{0\alpha}^{\U^{*}}(\delta)=\M_{\alpha}^{\U}\mid i_{0\alpha}^{\U}(\delta)$.
\begin{claim*}
$i_{0\delta}^{\U}(\xi)=i_{0\delta}^{\U^{*}}(\xi)$.
\end{claim*}
\begin{proof}
Let $\kappa$ be the critical point of $\E_{\alpha}^{\T}$, where
$\alpha=\inf(b\setminus1)$. It suffices to see that every function
$f:\kappa\rightarrow\xi$ which is in $M$ is also in $M\mid\xi+\omega$.
To this end note that since $\rho_{\omega}^{M\mid\xi}=\delta$, there
is a surjection $g:\delta\rightarrow\xi$ such that $g\in M\mid\xi+\omega=J(M\mid\xi)$.
Define a function $h:\kappa\rightarrow\delta$ such that $\alpha<\kappa$,
$h(\alpha)=\min(g^{-1}(f(\alpha)))$. As $\delta$ is regular and
$\kappa<\delta$, $h$ is essentially a bounded subset of $\delta$
and thus, $h\in M\mid\xi$. But since $f$ is definable from $g$
and $h$ this means that $f\in M\mid\xi+\omega$.
\end{proof}
\begin{claim*}
$\U^{*}\in\bar{A}$.
\end{claim*}
\begin{proof}
It suffices to see that $\U\restriction\delta$ is guided by $Q$-structures.
As in the previous claim, for every $\alpha<\delta$, if $b_{\alpha}=[0,\alpha]_{\U}$,
then $Q(b_{\alpha},\U\restriction\alpha)\lhd\M_{\alpha}^{\U}\mid\lh(E_{\alpha}^{\U})=\M_{\alpha}^{\U^{*}}\mid\lh(E_{\alpha}^{\U^{*}})$.
Thus, $\U\restriction\delta$ is guided by $Q$-structures. 
\end{proof}
Let $\bar{A}^{M[g]}$ the interpretation of the set $\bar{A}$ in
$M[g]$. Note that although $\U^{*}\in\bar{A}$, it is not true that
$\U^{*}\in\bar{A}^{M[g]}$. However, since $x\in M[g]$ we have by
Theorem \ref{thm:Sigma One One Bounding} that $A$ and $A^{M[g]}$
are bounded by $\omega_{1}^{\ck}(x)=\omega_{1}^{\ck}(x)^{M[g]}$. 

Since $g$ has the $\delta^{+M}$-c.c. we have $\delta^{+\M}=\delta^{+\M[g]}=\omega_{1}^{\M[g]}>\omega_{1}^{\ck}(x)$.
But $\U^{*}\in A$ and we assumed that $i_{0\delta}^{\U}(\xi)=i_{0\delta}^{\U^{*}}(\xi)>\delta^{+\M}$.
Contradiction!
\end{proof}
We need a second lemma with a similar flavor
\begin{lem}
\label{lem:Second Key Lemma}Let $M$ be a premouse such that $\delta$
is a regular, uncountable cardinal of $M$ and $\delta^{+}$ exists
in $M$. Suppose that $M$ is $(0,\delta+1)$-iterable, and there
is no initial segment of $M$ which models $KP$ and has a Woodin
cardinal. Let $\U$ be a $0$-maximal iteration tree on $M$ such
that $\lh(\U)=\delta+1$ and $\sup\{\lh(E_{\alpha}^{\U}):\alpha<\delta\}=\delta$.
Let $b:=[0,\delta]_{U}$. Suppose that there is $\eta\in b\cap\delta\setminus\sup(D^{\U})$
such that $i_{\eta\delta}^{\U}(\gamma)=\delta$ for some $\gamma<\delta$.

Then $\delta^{+\M_{\delta}^{\U}}<\delta^{+M}$.
\end{lem}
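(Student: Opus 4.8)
The plan is to reduce everything to the proof of Lemma~\ref{lem:Key Lemma of Section One}. The hypothesis $i_{\eta\delta}^{\U}(\gamma)=\delta$, together with $\eta$ lying above all truncation points, lets us exhibit $\delta^{+\M_{\delta}^{\U}}$ as the image, under a branch embedding of $\U$, of an ordinal $\le\delta<\delta^{+M}$; the $\Sigma_{1}^{1}$-bounding argument of that proof then bounds such images below $\delta^{+M}$. First, since $\eta\in b\setminus\sup(D^{\U})$, the map $i_{\eta\delta}^{\U}\colon\M_{\eta}^{\U}\to\M_{\delta}^{\U}$ is fully elementary. By $\sup\{\lh(E_{\alpha}^{\U}):\alpha<\delta\}=\delta$ we have $\M_{\delta}^{\U}\mid\delta=\M(\U\restriction\delta)$, and $\delta=\delta(\U\restriction\delta)$ is a limit of cardinals of $\M_{\delta}^{\U}$, hence a cardinal of $\M_{\delta}^{\U}$; so $\gamma$ is a cardinal of $\M_{\eta}^{\U}$. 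We may assume $\delta^{+\M_{\delta}^{\U}}$ exists, else there is nothing to prove, and then $\xi:=\gamma^{+\M_{\eta}^{\U}}$ exists with $i_{\eta\delta}^{\U}(\xi)=\delta^{+\M_{\delta}^{\U}}$. I claim $\xi\le\delta$: if $\OR^{\M_{\eta}^{\U}}\le\delta$ this is immediate since $\xi<\OR^{\M_{\eta}^{\U}}$; otherwise all extenders on $\U\restriction(\eta+1)$ have length $<\delta$, and since premice satisfy the $GCH$ such ultrapowers are continuous at $\delta$ and preserve $\delta$ as a cardinal, so $\delta$ is still a cardinal of $\M_{\eta}^{\U}$ and $\xi=\gamma^{+\M_{\eta}^{\U}}\le\delta$.

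Assume first that $b$ has no truncation point, so $i_{0\delta}^{\U}$ is total. Fix $\nu$ with $\delta<\nu<\delta^{+M}$, $\rho_{\omega}^{M\mid\nu}=\delta$, and $\nu>\OR^{Q}$, where $Q\lhd M$ is least with $J(Q)\models``\delta\text{ is not Woodin}''$ (such $\nu$ are cofinal in $\delta^{+M}$, and $\OR^{Q}<\delta^{+M}$ as in Lemma~\ref{lem:Key Lemma of Section One}). Then $\xi\le\delta<\nu\le i_{0\eta}^{\U}(\nu)$ gives
\[
\delta^{+\M_{\delta}^{\U}}=i_{\eta\delta}^{\U}(\xi)<i_{\eta\delta}^{\U}\bigl(i_{0\eta}^{\U}(\nu)\bigr)=i_{0\delta}^{\U}(\nu).
\]
Now run the proof of Lemma~\ref{lem:Key Lemma of Section One} with $M\mid\nu+\omega$ in place of its $M\mid\xi+\omega$ and $\nu$ in place of its $\xi$; the only change to $\bar{A}$ is that clause~4 (the requirement $i_{0\delta}^{\T}(\delta)=\delta$, which here is false) is dropped — it is implied by clause~5 anyway, since a $Q$-guided $\T\restriction\delta$ has all models below $\delta$ well-founded, hence $\M(\T\restriction\delta)$ well-founded, hence $\M_{\delta}^{\T}\mid\delta\in\wfp(\M_{\delta}^{\T})$, and that is all clause~4 is used for in the appeal to Ville's Lemma~\ref{lem:Villes Lemma}. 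The absence of an initial segment modelling $KP$ with a Woodin passes from $M$ to every model appearing in a $\T\in\bar{A}$ (via elementarity, and initial-segment absoluteness through truncations), so the claims of that proof — each $\T\in\bar{A}$ has a $Q$-structure for $\T\restriction\delta$, hence well-founded $\M_{\delta}^{\T}$ — go through unchanged. Let $g\subseteq\col(\omega,\delta)$ be $V$-generic; as $\col(\omega,\delta)$ has the $\delta^{+}$-c.c., $\delta^{+M}=\delta^{+M[g]}=\omega_{1}^{M[g]}$, and some $x\in M[g]\cap{}^{\omega}\omega$ codes $M\mid\nu+\omega$. With $A:=\{\OR^{\M_{\delta}^{\T}}:\T\in\bar{A}\}$ and $A^{*}:=\{y\in\WO:\otp(R_{y})\in A\}$, the set $A^{*}$ is $\Sigma_{1}^{1}(x)$ exactly as before, so Theorem~\ref{thm:Sigma One One Bounding} in $M[g]$ yields $\sup A<\omega_{1}^{\ck}(x)\le\omega_{1}^{M[g]}=\delta^{+M}$. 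Since $\U^{*}:=\U\restriction(M\mid\nu+\omega)\in\bar{A}$ and $i_{0\delta}^{\U^{*}}(\nu)=i_{0\delta}^{\U}(\nu)$ by the truncation argument of Lemma~\ref{lem:Key Lemma of Section One} (using $\rho_{\omega}^{M\mid\nu}=\delta$), we get $\delta^{+\M_{\delta}^{\U}}<i_{0\delta}^{\U}(\nu)=i_{0\delta}^{\U^{*}}(\nu)<\OR^{\M_{\delta}^{\U^{*}}}\le\sup A<\delta^{+M}$.

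It remains to treat the case where $b$ has a truncation point. Let $\zeta_{0}\in b$ be least with $[\zeta_{0},\delta]_{U}\cap D^{\U}=\emptyset$; then $\zeta_{0}\le\eta$, and, writing $M^{*}:=\M_{\zeta_{0}}^{*\U}\lhd\M_{\pred_{U}(\zeta_{0})}^{\U}$ for the premouse iterated non-dropping along $b$ from $\zeta_{0}$, one repeats the above with $M^{*}$ and the tail of $\U$ above $\zeta_{0}$ in place of $M$ and $\U$ — if $\OR^{M^{*}}\le\delta$, then, short extenders not moving $\delta$, $\delta^{+\M_{\delta}^{\U}}$ does not exist and there is nothing to prove, so assume $\OR^{M^{*}}>\delta$. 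The genuinely new point, and the main obstacle, is that the coding real $x$ must now code an initial segment $M^{*}\mid\nu'+\omega$ of $M^{*}$ rather than of $M$, so one needs $M^{*}\mid\nu'+\omega\in M$ with $|M^{*}\mid\nu'+\omega|^{M}\le\delta$; this holds because $M^{*}\mid\nu'+\omega$ is the outcome of the iteration $\U\restriction(\zeta_{0}+1)$ of an initial segment of $M$ — every extender used has length $<\delta$ and every $Q$-structure consulted along $\U\restriction\delta$ has height $<\delta$ (by $1$-smallness and the hypothesis on $M$), so $M$, having $\delta$ regular and $\delta^{+}$, carries out this iteration internally through level $\nu'+\omega<\delta^{+M}$, placing $M^{*}\mid\nu'+\omega$ in $M$. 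With such an $x\in M[g]$, the bounding argument proceeds as in the first case. Besides this point, the remaining work is bookkeeping in the style of Lemma~\ref{lem:Key Lemma of Section One}'s proof: verifying that the tail of $\U$, cut down to the coded structure, lies in $\bar{A}$, and that the modified clause~4 suffices in Ville's argument.
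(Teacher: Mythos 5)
Your non-dropping case is sound and is essentially the paper's argument: rebase $\U$ on a level of $M$ of height $<\delta^{+M}$, bound the ordinal height of the rebased last model by $\Sigma_{1}^{1}$-bounding applied to a set of putative trees coded by a real in $M[g]$, and exhibit $\delta^{+\M_{\delta}^{\U}}$ as the image under a branch embedding of an ordinal $\le\delta$. Your observation that clause 4 may be dropped because $Q$-guidedness already yields $\M_{\delta}^{\T}\mid\delta\in\wfp(\M_{\delta}^{\T})$ is correct, and your direct computation $\xi=\gamma^{+\M_{\eta}^{\U}}\le\delta$ replaces the paper's cofinality argument harmlessly.

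The dropping case, however, has a genuine gap at exactly the point you flag as the main obstacle. You need the coding real to lie in $M[g]$, hence $M^{*}\mid\nu'+\omega\in M$, and you justify this by saying that $M$ ``carries out the iteration $\U\restriction(\zeta_{0}+1)$ internally.'' But $\U$ is an arbitrary iteration tree chosen in $V$: the sequence of extender indices $\langle\lh(E_{\alpha}^{\U}):\alpha<\zeta_{0}\rangle$ is, after coding, just some bounded subset of $\delta$ living in $V$, and there is no reason whatsoever that it belongs to $M$ (for instance, $M$ may be countable in $V$). The smallness and local definability of each individual extender, model and $Q$-structure does not place the \emph{sequence} of choices inside $M$, so $M^{*}=\M_{\zeta_{0}}^{*\U}$ need not be in $M$; and for a real $x\in V[g]\setminus M[g]$ the inequality $\omega_{1}^{\ck}(x)<\omega_{1}^{M[g]}=\delta^{+M}$ — the entire point of the argument — simply fails in general. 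The paper avoids this by taking the trees in $\bar{A}$ to be putative trees on $M\mid\delta$, which \emph{is} an element of $M$, uniformly in all cases; one checks that $\U^{*}:=\U$ rebased on $M\mid\delta$ has the same tree- and drop-structure as $\U$, and when $\U$ drops on its main branch one gets $\M_{\delta}^{\U}=\M_{\delta}^{\U^{*}}$ outright, whence $\delta^{+\M_{\delta}^{\U}}\le\OR^{\M_{\delta}^{\U^{*}}}<\omega_{1}^{\ck}(x)<\delta^{+M}$ with no need to locate any post-truncation model inside $M$. Two smaller points: when $\OR^{M^{*}}\le\delta$ it is not true that ``there is nothing to prove,'' since the conclusion must then be read as $\OR^{\M_{\delta}^{\U}}<\delta^{+M}$, which still requires the bounding argument; and the paper's appeal to Lemma \ref{lem:General Preservation of KP} (to rule out a truncation to some $P$ with largest cardinal Woodin iterating to an admissible last model) is a genuine step, not bookkeeping, although your retention of the non-dropping clause in $\bar{A}$ does mitigate it.
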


\begin{proof}
The proof follows closely the proof of Lemma \ref{lem:Key Lemma of Section One}.
We modify the definition of $\bar{A}$ by omitting 2.) and modifying
4.) to ``there exists an $\eta\in b\cap\delta\setminus\sup(D^{\T})$
and a $\gamma<\delta$ such that $i_{\eta\delta}^{\T}(\gamma)=\delta$''.
Moreover, we alter 1.) by only requiring that $\T$ is a putative
iteration tree on $M\mid\delta$. 

In showing that for all $\T\in\bar{A}$, $\T$ is an iteration tree
we encounter a problem, we have not had before: Can it happen that
$\T$ drops in model to some $P$ with a largest cardinal $\eta$
such that $P\models"\eta\text{ is Woodin}"$ and then $P$ iterates
to $\M_{\delta}^{\T}$ in such a way that $\M_{\delta}^{\T}\models KP$?
If this could happen and the $Q$-structure for $\M_{c}^{\T\restriction\delta}$,
where $c$ is the correct branch, is $\M_{c}^{\T\restriction\delta}=\wfp(M_{\delta}^{\T})$,
then we cannot derive a contradiction as before. However, by Lemma
\ref{lem:General Preservation of KP} this is not possible and therefore,
we may argue as in the proof of Lemma \ref{lem:Key Lemma of Section One},
in order to see that for $\T\in\bar{A}$, $\T$ is an iteration tree.

Similar to before $\U^{*}$ will be the tree $\U$ considered on $M\mid\delta$.
Again, $\U^{*}\in\bar{A}$ and the tree- and dropping-structure of
$\U$ and $\U^{*}$ are the same. In particular $\M_{\alpha}^{\U^{*}}\unlhd\M_{\alpha}^{\U}$
for all $\alpha\leq\delta$. 

By the same argument as in the proof of Lemma \ref{lem:Key Lemma of Section One},
we will have that $\OR^{\M_{\delta}^{\U^{*}}}<\omega_{1}^{\ck}(x)<\delta^{+M}$
where $x$ codes $M\mid\delta$. We claim that this implies that $\delta^{+\M_{\delta}^{\U}}\leq\delta^{+M}$.
If $D^{\U}\cap b\neq\emptyset$, i.e. $\U$ is dropping on its main
branch, $\M_{\delta}^{\U}=\M_{\delta}^{\U^{*}}$, so that $\delta^{+\M_{\delta}^{\U}}\leq\OR^{\M_{\delta}^{\U}}<\delta^{+M}$.
So suppose that $D^{\U}\cap b=\emptyset$, equivalently that $D^{\U^{*}}\cap b=\emptyset$.
Then $i^{\U}:M\rightarrow\M_{\delta}^{\U}$ and $i^{\U^{*}}:M\mid\delta\rightarrow\M_{\delta}^{\U^{*}}$
exist. Let $\eta<\delta$ be least such that there is $\bar{\delta}<\delta$
such that $i_{\eta\delta}^{\U^{*}}(\bar{\delta})=\delta$. If $\eta=0$,
then since $\delta$ is regular in $M$ and $\sup\{\lh(E_{\alpha}^{\U}):\alpha<\delta\}=\delta,$$\delta^{+\M_{\delta}^{\U}}=i^{\U}(\bar{\delta}^{+M})\leq i^{\U}(\delta)=\sup i^{\U^{*}}[\delta]<\omega_{1}^{\ck}(x)<\delta^{+M}$.
So suppose that $\eta>0$. Note that $\bar{\delta}^{+\M_{\eta}^{\U^{*}}}$
must exist, since otherwise $\bar{\delta}$ is the greatest cardinal
of $\M_{\eta}^{\U^{*}}$ which would imply that $\bar{\delta}\in\ran(i_{0\eta}^{\U^{*}})$.
But then $\eta=0$, contradicting our assumption on $\eta$. Moreover,
since there is no dropping on the main branch, $i_{0\eta}^{\U^{*}}$
is cofinal in $\OR^{\M_{\eta}^{\U^{*}}}$. But then, $\delta^{+\M_{\delta}^{\U}}=i_{\eta\delta}^{\U}(\bar{\delta}^{+\M_{\eta}^{\U}})=i_{\eta\delta}^{\U^{*}}(\bar{\delta}^{+\M_{\eta}^{\U^{*}}})\leq i_{\eta\delta}^{\U^{*}}(i_{0\eta}^{\U^{*}}(\gamma))=i^{\U^{*}}(\gamma)<\omega_{1}^{\ck}(x)$
for some $\gamma<\delta$.
\end{proof}
We are now ready to prove the main theorem of this section
\begin{thm}
\label{thm:Main Theorem of Section One}Let $\Omega$ be a measurable
cardinal. Let $\W$ and $\R$ be premice such that $\OR^{\W}=\OR^{\R}=\Omega$
and suppose that both are $\Omega+1$-iterable. Suppose that neither
$\W$ nor $\R$ have an initial segment which models $KP\land\exists\delta("\delta\text{ is Woodin}")$.
Then the following are equivalent:
\begin{itemize}
\item $\W\leq^{*}\R$, 
\item there is a club $C\subset\Omega$ such that for all $\alpha\in C$,
if $\alpha$ is regular, then $(\alpha^{+})^{\W}\leq(\alpha^{+})^{\R}$,
and 
\item there is a stationary set $S\subset\Omega$ such that for all $\alpha\in S$,
$(\alpha^{+})^{\W}\leq(\alpha^{+})^{\R}$.
\end{itemize}
\end{thm}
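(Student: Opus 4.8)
The plan is to prove the cycle of implications by combining the coiteration theorem for weasels with the two key lemmas. The implication from $\W\leq^*\R$ to the existence of a club is essentially the ``easy direction'' of Steel's conjecture and is already known: if $(\T,\U)$ is the terminal coiteration of $(\W,\R)$ with $\R$ winning, then by Theorem \ref{thm:Coiteration of Weasels} we are in case (1), so $\M_\theta^\T\unlhd\M_\gamma^\U$, $i^\T=i_{0\theta}^\T$ and $i^\U=i_{0\gamma}^\U$ exist and are continuous at $\Omega$, and $i^\T[\Omega]\subset\Omega$, $i^\U[\Omega]\subset\Omega$. Applying Lemma \ref{lem:weasel lemma for iterate of ordinal height kappa} to both trees and intersecting, we get a club $C\subset\Omega$ such that for all $\alpha\in C$, $i_{0\alpha}^\T[\alpha]\subset\alpha$ and $i_{0\alpha}^\U[\alpha]\subset\alpha$; shrinking $C$ further using Lemma \ref{lem:club of sups} we may also assume $\alpha=\sup\{\lh(E_\beta^\T):\beta<\alpha\}=\sup\{\lh(E_\beta^\U):\beta<\alpha\}$ for $\alpha\in C$, and (by continuity of $i^\T,i^\U$ at such $\alpha$, or a standard closure argument) that $i_{0\alpha}^\T(\alpha)=\alpha=i_{0\alpha}^\U(\alpha)$. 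For such regular $\alpha\in C$ one has $\M_\alpha^\T\mid\alpha=\W\mid\alpha$ essentially by agreement of the models of the tree past the critical point, hence $\alpha^{+\M_\alpha^\T}=\alpha^{+\W}$ if $\alpha^{+\W}<\lh(E_\alpha^\T)$, and similarly on the $\U$-side, and since $\M_\theta^\T\unlhd\M_\gamma^\U$ agree below $\Omega$ one reads off $\alpha^{+\W}\le\alpha^{+\R}$. The second implication, from the club to the stationary set, is trivial since a club is stationary.

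The substantive direction is from the stationary set $S$ to $\W\leq^*\R$. Suppose not; then $\R<^*\W$ is false and we are in case (1) of Theorem \ref{thm:Coiteration of Weasels} with the roles reversed — that is, $\W<^*\R$ fails, so $\M_\gamma^\U\lhd\M_\theta^\T$ with the $\U$-side having dropped or moved more. Rather than splitting into these cases by hand, the cleaner route: assume toward a contradiction that $\R<^*\W$ strictly, i.e. in the terminal coiteration $(\T,\U)$ of $(\W,\R)$ we have case (2), so $\M_\gamma^\U\unlhd\M_\theta^\T$ with $\OR^{\M_\gamma^\U}=\Omega$, $D^\U\cap[0,\gamma]_U=\emptyset$, $i_{0\gamma}^\U[\Omega]\subset\Omega$, and the $\T$-side is ``larger'': either $D^\T\cap[0,\theta]_T\ne\emptyset$ or $i_{0\theta}^\T$ is strictly larger somewhere. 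The idea is to find, on a club $C$, ordinals $\alpha\in S\cap C$ at which $\T$ and $\U$ restricted to $\alpha$ have length $\alpha+1$, are non-dropping with $\sup\lh(E_\beta)=\alpha$ and fixed-point-at-$\alpha$ on their main branches, so that Lemma \ref{lem:Key Lemma of Section One} (applied to $M=\M_{\alpha}^{?}$-type situations, really to $\W\mid(\text{something})$ and $\R\mid(\text{something})$) forces $i_{0\alpha}^\T(\alpha^{+\W})=\alpha^{+\W}$ and likewise $i_{0\alpha}^\U(\alpha^{+\R})=\alpha^{+\R}$, while the larger ($\T$) side, having genuinely dropped or having a critical point issue below $\alpha^{+\W}$, either collapses $\alpha^{+\W}$ or, via Lemma \ref{lem:Second Key Lemma}, satisfies $\alpha^{+\M_\alpha^\T}<\alpha^{+\W}$. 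Comparing with $\M_\gamma^\U\unlhd\M_\theta^\T$ and $\alpha^{+\M_\alpha^\U}=\alpha^{+\R}$, we then get $\alpha^{+\R}=\alpha^{+\M_\gamma^\U}=\alpha^{+\M_\theta^\T}\le\alpha^{+\M_\alpha^\T}<\alpha^{+\W}$ — but for stationarily many $\alpha\in S$ the hypothesis gives $\alpha^{+\W}\le\alpha^{+\R}$, a contradiction. So case (2) with a strict loss is impossible, forcing $\W\leq^*\R$.

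More concretely, the key mechanism is: on a club of $\alpha<\Omega$ one has $\T\restriction\alpha$ and $\U\restriction\alpha$ are iteration trees of length $\alpha+1$ on $\W$ and $\R$ respectively with $\sup\{\lh(E_\beta):\beta<\alpha\}=\alpha$ (Lemma \ref{lem:club of sups}), with $\alpha$ a fixed point of the main-branch embeddings and with $\M_\alpha^\T\mid\alpha=\W\mid\alpha$, $\M_\alpha^\U\mid\alpha=\R\mid\alpha$, so in particular $\alpha$ is a regular uncountable cardinal of these models and $\alpha^{+}$ exists there (the $1$-smallness and no-$KP$-with-Woodin hypotheses pass to the relevant initial segments). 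Then Lemma \ref{lem:Key Lemma of Section One} applied along the $\U$-side (which, in case (2), is non-dropping on its main branch) yields $i_{0\alpha}^\U(\alpha^{+\R})=\alpha^{+\M_\alpha^\U}$ and, by continuity and the agreement $\M_\alpha^\U\mid i_{0\alpha}^\U(\alpha)=\R\mid\alpha$ being misleading — actually one wants $\alpha^{+\M_\alpha^\U}=\alpha^{+\M_\gamma^\U}=\alpha^{+\R}$ for such $\alpha$ (since past stage $\alpha$ the $\U$-tree does not disturb $\alpha^+$). On the $\T$-side the point is that if $\T$ is the winning side with a drop or a ``late'' critical point, there are stationarily many $\alpha\in S$ at which either $\M_\alpha^\T$ already has $\OR\ge\alpha^{+\W}$ surpassed in a bad way, or the hypotheses of Lemma \ref{lem:Second Key Lemma} hold at $\alpha$ (there is $\eta\in[0,\alpha]_T\cap\alpha$ past the last drop with $i_{\eta\alpha}^\T(\gamma)=\alpha$ for some $\gamma<\alpha$), giving $\alpha^{+\M_\alpha^\T}<\alpha^{+\W}$ and hence $\alpha^{+\M_\theta^\T}<\alpha^{+\W}$, contradicting $\alpha^{+\R}\le\alpha^{+\W}$ together with $\M_\gamma^\U\unlhd\M_\theta^\T$.

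The hard part will be the bookkeeping of exactly which club/stationary reflection of the coiteration data is needed at a single $\alpha$ — i.e. simultaneously arranging that $\T\restriction\alpha$, $\U\restriction\alpha$ have length $\alpha+1$, are in the non-dropping/fixed-point configuration required by Lemmas \ref{lem:Key Lemma of Section One} and \ref{lem:Second Key Lemma}, that $\alpha\in S$, and that the drop or critical-point phenomenon witnessing case (2)'s strict loss is ``visible below $\alpha$'' at stationarily many such $\alpha$. This last point — that a genuine drop or a genuine move on the main branch of the $\T$-side must be reflected to stationarily many $\alpha\in S$ in the precise form demanded by Lemma \ref{lem:Second Key Lemma} — is where the argument really lives, and it should follow from Lemma \ref{lem:Enlarging iteration of weasels} (for the critical-point-$\alpha$-maps-to-$\alpha$ phenomenon) together with the fact that the drop point of the $\T$-side main branch, if any, occurs at some fixed ordinal $<\Omega$ and hence lies below $\alpha$ for all large $\alpha$. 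I would organize the write-up as: (i) the easy direction via Lemmas \ref{lem:weasel lemma for iterate of ordinal height kappa} and \ref{lem:club of sups}; (ii) trivial middle implication; (iii) the contrapositive of the hard direction, fixing the coiteration, choosing the club of good $\alpha$, intersecting with $S$, and deriving $\alpha^{+\R}<\alpha^{+\W}$ from Lemmas \ref{lem:Key Lemma of Section One} and \ref{lem:Second Key Lemma}.
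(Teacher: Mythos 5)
Your overall route is the paper's: coiterate, normalize on a club via Lemmas \ref{lem:weasel lemma for iterate of ordinal height kappa} and \ref{lem:club of sups}, and compute $\alpha^{+}$ in the iterates via Lemmas \ref{lem:Key Lemma of Section One} and \ref{lem:Second Key Lemma}; your (3)$\Rightarrow$(1) is essentially the paper's club-to-mouse-order direction run through the contrapositive. However, your treatment of (1)$\Rightarrow$(2) contains a genuine error. The claim $\M_{\alpha}^{\T}\mid\alpha=\W\mid\alpha$ is false: the coiteration changes the models below $\alpha$ (already $\M_{1}^{\T}$ disagrees with $\W$ at $\lh(E_{0}^{\T})<\alpha$); what agreement of models gives is $\M_{\alpha}^{\T}\mid\alpha=\M(\T\restriction\alpha)=\M_{\alpha}^{\U}\mid\alpha$. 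More importantly, no amount of agreement below $\alpha$ can yield $\alpha^{+\M_{\alpha}^{\T}}=\alpha^{+\W}$: the iterate may acquire new subsets of $\alpha$ constructed above $\alpha$. That identity --- equivalently $i_{0\alpha}^{\T}(\alpha^{+\W})=\alpha^{+\W}$ --- is precisely the content of Lemma \ref{lem:Key Lemma of Section One}; it is where the no-$KP$-with-Woodin hypothesis enters, and it is exactly what fails in the counterexample of Section \ref{subsec:The-Construction-of the counterexample}. So this direction is not the ``easy, already known'' one; it is the crux of the theorem. (You do invoke Lemma \ref{lem:Key Lemma of Section One} correctly for the $\U$-side in your third paragraph, so the repair is simply to invoke it here as well.)

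A second gap sits in the same direction: ``similarly on the $\U$-side'' presupposes that the losing tree is also non-dropping on its main branch with $i_{0\alpha}^{\U}(\alpha)=\alpha$ on a club, which can fail when $\W<^{*}\R$ strictly (then $\OR^{\M_{\Omega}^{\U}}>\Omega$ is possible and Lemma \ref{lem:Key Lemma of Section One} does not apply to $\U$). The paper splits into cases here: if $\OR^{\M_{\Omega}^{\U}}>\Omega$, one uses Lemma \ref{lem:Enlarging iteration of weasels} to arrange $i_{\eta\alpha}^{\U}(\eta)=\alpha$ for club-many $\eta<\alpha$ and then Lemma \ref{lem:Second Key Lemma} to get $\alpha^{+\M_{\alpha}^{\U}}<\alpha^{+\R}$, which still yields $\alpha^{+\W}\leq\alpha^{+\R}$. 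You carry out exactly this analysis for the $\T$-side in your hard direction, so all the machinery is present in your proposal; it just has to be deployed in (1)$\Rightarrow$(2) as well. Finally, since your third bullet carries no regularity restriction, the concluding contradiction in (3)$\Rightarrow$(1) requires finding a \emph{regular} $\alpha\in S$ inside the club you construct; this needs an extra word (the paper avoids the issue by leaving the third equivalence as an exercise).
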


\begin{proof}
Let us first suppose that $\W\leq^{*}\R$ and aim to show that there
is a club $C\subset\Omega$ such that for all $\alpha\in C$, if $\alpha$
is regular, then $(\alpha^{+})^{\W}\leq(\alpha^{+})^{\R}$. Let $(\T,\U)$
be the coiteration of $(\W,\R)$. By Theorem \ref{thm:Coiteration of Weasels},
the coiteration is successful. Since $\W\leq^{*}\R$, $\M_{\Omega}^{\T}\unlhd\M_{\Omega}^{\U}$
and $\T$ does not drop on its main branch. In particular, $i^{\T}:\W\rightarrow\M_{\Omega}^{\T}$
exists. We may assume by padding the trees if necessary that $\lh(\T)=\lh(\U)=\Omega+1$.
Note that by Theorem \ref{thm:Coiteration of Weasels} $\OR^{\M_{\Omega}^{\T}}=\Omega$.
Thus, by Lemma \ref{lem:weasel lemma for iterate of ordinal height kappa},
there is a club $D_{T}\subset\Omega$ such that for all $\alpha\in D_{T}$,
$i_{0\alpha}^{\T}[\alpha]\subset\alpha$. Let $\tilde{D}_{T}$ be
the club given by Lemma \ref{lem:club of sups} for $\T$. Set $C_{T}:=D_{T}\cap\tilde{D}_{T}$.Let
$\alpha\in C_{T}$. Then, if $\alpha$ is regular, $i_{0\alpha}^{\T}(\alpha)=\alpha$.
Thus, by Lemma \ref{lem:Key Lemma of Section One}, we have that $\alpha^{+\W}=\alpha^{+\M_{\alpha}^{\T}}$.
But since $\alpha\in\tilde{D}_{T}$, $\crt(i_{\alpha\Omega}^{\T})\geq\alpha$.
This implies by the usual argument for ultrapowers, that $\alpha^{+\M_{\alpha}^{\T}}=\alpha^{+\M_{\Omega}^{\T}}$.Since
$\M_{\Omega}^{\T}\unlhd\M_{\Omega}^{\U}$ and $\Omega$ is a cardinal,
we have that $\alpha^{+\M_{\Omega}^{\T}}=\alpha^{+\M_{\Omega}^{\U}}$
for all $\alpha\in C_{T}$. 

Case 1: $\OR^{\M_{\Omega}^{\U}}>\Omega$.Let $D_{U}\subset\Omega$
be the club given by Lemma \ref{lem:Enlarging iteration of weasels}
intersected with the club given by Lemma \ref{lem:club of sups}.
Then for all $\alpha,\beta\in D_{U}$, such that $\alpha<\beta$,
we have that $i_{\alpha\beta}^{\U}(\alpha)=\beta$. Thus, for every
$\alpha\in D_{T}^{\prime}$ which is not the least element of $D_{T}$
and is regular, we have by Lemma \ref{lem:Second Key Lemma}, that
$\alpha^{+\M_{\alpha}^{\U}}<\alpha^{+\R}$. Moreover, since $\crt(i_{\alpha\Omega}^{\U})\geq\alpha$
(note we are beyond the drops of $\U$), $\alpha^{+\M_{\alpha}^{\U}}=\alpha^{+\M_{\Omega}^{\U}}$.
Thus, $C:=D_{U}^{\prime}\cap C_{T}$ witnesses the claim.

Case 2: $\OR^{\M_{\Omega}^{\U}}=\Omega$. Then by the usual argument
$i_{0\Omega}^{\U}$ exists. In this case we can construct a club $C_{U}$
for $\U$ as $C_{T}$ was constructed for $\T$ and set $C:=C_{T}\cap C_{U}$.

Now suppose that there is a club $C\subset\Omega$ such that for all
$\alpha\in C$, if $\alpha$ is regular, then $\alpha^{+\W}\leq\alpha^{+\R}$.
We aim to show that $\W\leq^{*}\R$. Let $(\T,\U)$ be the coiteration
of $(\W,\R)$ which again by Theorem \ref{thm:Coiteration of Weasels}
is successful. Suppose for the sake of contradiction that $\M_{\infty}^{\T}\rhd\M_{\infty}^{\U}$.
By the same arguments as before we can construct a club $D\subset\Omega$
such that for all regular $\alpha\in D$, $\alpha^{+\W}>\alpha^{+\R}$.
Contradiction!

We leave the third equivalence as an exercise for the reader.
\end{proof}
\begin{cor}
Let $\Omega$ be a measurable cardinal. Let $\W$ and $\R$ be premice
such that $\OR^{\W}=\OR^{\R}=\Omega$ and suppose that both are $\Omega+1$-iterable.
Suppose that there is no transitive model of $KP\land\exists\delta("\delta\text{ is Woodin}")$.
Then the following are equivalent:
\begin{itemize}
\item $\W\leq^{*}\R$, 
\item there is a club $C\subset\Omega$ such that for all $\alpha\in C$,
if $\alpha$ regular, then $(\alpha^{+})^{\W}\leq(\alpha^{+})^{\R}$,
and 
\item there is a stationary set $S\subset\Omega$ such that for all $\alpha\in S$,
$(\alpha^{+})^{\W}\leq(\alpha^{+})^{\R}$.
\end{itemize}
\end{cor}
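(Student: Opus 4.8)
The plan is to obtain this corollary as an immediate consequence of Theorem \ref{thm:Main Theorem of Section One}: the only thing to do is to check that the global hypothesis here --- that there is \emph{no} transitive model of $KP\land\exists\delta("\delta\text{ is Woodin}")$ --- implies the local hypothesis of that theorem, namely that neither $\W$ nor $\R$ has an initial segment modeling $KP\land\exists\delta("\delta\text{ is Woodin}")$. First I would recall that premice are by convention well-founded, so any initial segment $\W\mid\xi$ with $\xi\le\OR^{\W}$ is again a well-founded premouse and hence, after the usual identification with its transitive collapse, a genuine transitive set; the same applies to every $\R\mid\xi$. Since the axioms of $KP$ and the assertion ``there is a Woodin cardinal'' are first-order in the relevant language, an initial segment of $\W$ or $\R$ satisfying $KP\land\exists\delta("\delta\text{ is Woodin}")$ would be a transitive model of that theory, contradicting the standing assumption. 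So no such initial segment exists.

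Once that is in place, the remaining hypotheses of Theorem \ref{thm:Main Theorem of Section One} --- $\Omega$ measurable, $\OR^{\W}=\OR^{\R}=\Omega$, and $\Omega+1$-iterability of both --- are assumed outright, so the theorem applies and yields the equivalence of the three displayed conditions verbatim. I do not expect any real obstacle here: the entire content is the observation that ``no transitive model of $KP$ with a Woodin'' is exactly the form in which the anti-large-cardinal hypothesis is actually used in Lemma \ref{lem:Key Lemma of Section One} and Lemma \ref{lem:Second Key Lemma} --- it is what guarantees both the existence of the $Q$-structures $Q\lhd M$ and their absoluteness, which is what drives those branch-existence arguments --- so passing from the local to the global formulation costs nothing.
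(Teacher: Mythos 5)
Your proposal is correct and matches the paper's intent exactly: the corollary is stated immediately after Theorem \ref{thm:Main Theorem of Section One} with no separate proof, precisely because the global hypothesis trivially implies the local one via the transitivity of initial segments of premice. Nothing more is needed.
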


\section{The Counterexample}

In this section we construct a counterexample to Steel's conjecture
assuming large cardinals. In order to construct the counterexample
we need the following picture: We need an iterable premouse $M$ which
has an initial segment $N\lhd M$ such that $N\models KP$ and $N$
has largest cardinal $\kappa$ which is Woodin in $N$. Moreover,
$\kappa$ must be a measurable cardinal in the larger premouse $M$
and there must exist some $\Omega\in(\kappa^{+M},\OR^{M})$ which
is measurable in $M$.

We want to show that such $N$ exists if we assume that $M$ is the
least iterable sound premouse, which models $KP$ and has a largest
cardinal $\delta$ which is $\Sigma_{1}$-Woodin in $M$. Here $\Sigma_{1}$-Woodin
in $M$ means that for every $A\in\Sigma_{1}^{M}(M)$ there is some
$\kappa<\delta$ which is $<\delta,A$-reflecting in $M$.

In Subsection \ref{subsec:Basic-Properties-of-Admissible-Mice} we
will collect some basic Lemmas about admissible mice. In Subsection
\ref{subsec:The-Construction-of N} we will construct the above $N$
from the least iterable sound premouse $M$, which models $KP$ and
has a largest cardinal $\delta$ which is $\Sigma_{1}$-Woodin in
$M$. Note that since $M$ is the least such mouse, we will have that
$\kappa$, the largest cardinal of $N$, is not $\Sigma_{1}$-Woodin
in $N$. For the construction of the counterexample it will be important
that $N[g]$ can compute the order-type of a well-founded set $g$
which is added by the extender algebra forcing. In order for this
to work we will show that in general $N[g]\models KP$ if $g\subset\mathbb{P}\in N$
is $N$-generic, $N\models KP$, and $N\models"\mathbb{P}\text{ has the }\delta\text{-c.c.}"$.
This will be the content of Subsection \ref{subsec:The-Preservation-of KP}.
In Subsection \ref{subsec:The-Construction-of the counterexample}
we will construct the counterexample.

\subsection{Basic Properties of Admissible Mice\label{subsec:Basic-Properties-of-Admissible-Mice}}
\begin{defn}
A premouse $M=(\ulcorner M\urcorner,\in,\E^{M},F^{M})$ is called
an admissible premouse if it is passive, i.e. $F^{M}=\emptyset$,
and $M\models KP$. 
\end{defn}

\begin{rem*}
Note that an active premouse cannot model $KP$. We leave this as
an exercise. 

Moreover, since admissible premice are passive, when taking fine-structural
ultrapowers of an admissible premouse we may work with $M$ directly
and not the ``zero-core'' $\mathcal{C}_{0}(M)$.

For $n\leq\omega$, we say a premouse $M$ is $n$-countably iterable
if every countable elementary substructure of $M$ is $(n,\omega_{1},\omega_{1}+1)^{*}$-iterable.
If $M$ is $\omega$-countably iterable we also say that $M$ is countably
iterable.

Note that for an admissible premouse $M$ $0$-countable iterability
and $1$-countable iterability are equivalent, since $\fu ME1=\fu ME0$
as for every function $f\in\Sigma_{1}^{M}(M)$ such that $\dom(f)\in M$,
we have by $\Sigma_{1}$-bounding, $f\in M$.
\end{rem*}
The following is a nice criterion for the admissibility of a passive
premice, whose proof we leave to the reader.
\begin{lem}
\label{lem:KP in Premice}Let $M$ be a premouse. Then $M\models KP$
iff for all $f\in\Sigma_{1}^{M}(M)$ such that $f$ is a function
with $\dom(f)\in M$, $f\in M$.

Moreover, if $M$ has a largest cardinal $\delta$, then $M\models KP$
iff for all $f\in\Sigma_{1}^{M}(M)$ such that $f$ is a function
with $\dom(f)=\delta$, $f\in M$.
\end{lem}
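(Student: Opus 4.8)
Lemma \ref{lem:KP in Premice}: a premouse $M$ models $KP$ iff every $\Sigma_1^M(M)$ function with domain in $M$ lies in $M$; and, when $M$ has a largest cardinal $\delta$, it suffices to test functions with domain exactly $\delta$.

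The plan is as follows. For the first equivalence, the forward direction is essentially immediate: if $M \models KP$ and $f \in \Sigma_1^M(M)$ is a function with $\dom(f) = a \in M$, then $\Sigma_1$-bounding (collection) applied to the $\Sigma_1$ relation ``$y$ is the value of $f$ at $x$'' over the set $a$ produces a set $b \in M$ with $f \subseteq a \times b$; then $\Sigma_0$-separation (which follows from $\Sigma_1$-separation, available in $KP$) carves $f$ out of $a \times b$ as $\{(x,y) \in a\times b : f(x) = y\}$, so $f \in M$. For the converse, suppose every such $\Sigma_1$ function is in $M$; I must verify the $KP$ axioms in $M$. Extensionality, Foundation, Pairing, Union, and $\Sigma_0$-separation hold in any premouse (the $J$-hierarchy is closed under rudimentary functions and these are downward absolute / rudimentary), so the only axiom at issue is $\Sigma_0$-collection: given $a \in M$ and a $\Sigma_0$ formula $\varphi(x,y,p)$ with $M \models \forall x \in a\, \exists y\, \varphi$, define $f$ on $a$ by $f(x) = $ the $<_M$-least $y$ (using the canonical $\Sigma_1$ well-order of the $J$-structure $M$) with $\varphi(x,y,p)$. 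This $f$ is $\Sigma_1^M(M)$ with domain $a \in M$, hence $f \in M$ by hypothesis, and then $\ran(f) \in M$ by Union/Replacement-for-sets (rudimentary), giving the required bounding set. I would remark that the canonical $\Sigma_1$ good wellorder of $J$-structures (for premice, of $\mathcal{C}_0(M)$, but here we are treating $M$ directly as noted in the preceding remark) is the standard tool making this ``least witness'' function $\Sigma_1$-definable.

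For the ``moreover'' clause, assume $M$ has a largest cardinal $\delta$. One direction is trivial since $\delta \in M$. For the other, suppose every $\Sigma_1^M(M)$ function with domain $\delta$ is in $M$; I want to conclude $M \models KP$, i.e., by the first part, that every $\Sigma_1^M(M)$ function $f$ with domain $a \in M$ is in $M$. Since $\delta$ is the largest cardinal of $M$, every $a \in M$ has a surjection $\pi : \delta \twoheadrightarrow a$ with $\pi \in M$ (in fact $a$ is coded by a subset of $\delta$ in $M$, using $|a|^M \leq \delta$). Then $f \circ \pi$ is a $\Sigma_1^M(M)$ function with domain $\delta$, so $f \circ \pi \in M$ by hypothesis; and $f = (f\circ\pi) \circ \pi^{-1}$-style recovery, more precisely $f = \{(x,y) : \exists \xi\, (\pi(\xi) = x \wedge (f\circ\pi)(\xi) = y)\}$, is then obtained from $f \circ \pi$ and $\pi$ by a rudimentary operation, hence $f \in M$. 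This closes the loop.

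The only genuinely delicate point — and the one I would be most careful about — is the $\Sigma_1$-definability of the ``$<_M$-least witness'' function used in the converse direction: one needs the canonical wellorder of the $J$-structure underlying the premouse to be $\Sigma_1$ (uniformly, without parameters beyond those allowed), which is standard for $J$-structures over an extender sequence but worth citing rather than reproving. A secondary subtlety is the treatment of the predicates $\E^M, F^M$: since the premice here are passive in the applications ($F^M = \emptyset$) and, as the remark before the lemma notes, we may work with $M$ directly, the $J[\dot E]$-structure is still rudimentarily closed and admits the good $\Sigma_1$ wellorder, so the argument goes through verbatim; if one wanted the active case one would pass to $\mathcal{C}_0(M)$. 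I would therefore state the proof for passive $M$ (or for $\mathcal{C}_0(M)$) and leave the routine verifications of the rudimentary-closure facts, as the paper says, to the reader.
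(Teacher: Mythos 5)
The paper offers no proof of this lemma (it is explicitly left to the reader), so there is nothing to compare against; your argument is the standard one and is essentially correct: collection gives the forward direction, and the least-witness function against the canonical $\Sigma_{1}$ wellorder of the $J$-structure gives the converse, with the largest-cardinal case reduced to the general case via a surjection $\pi:\delta\rightarrow a$ in $M$. One justification needs repair, though. In the forward direction you write that $\Sigma_{0}$-separation, ``which follows from $\Sigma_{1}$-separation, available in $KP$,'' carves $f$ out of $a\times b$ as $\{(x,y)\in a\times b: f(x)=y\}$. But $\Sigma_{1}$-separation is \emph{not} a theorem of $KP$, and the defining condition $f(x)=y$ is $\Sigma_{1}$, not $\Sigma_{0}$, so $\Sigma_{0}$-separation alone does not apply. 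The standard fix is either to observe that the graph of $f$ is $\Delta_{1}$ (since $f$ is total and single-valued, $f(x)\neq y$ is also $\Sigma_{1}$) and invoke $\Delta_{1}$-separation, which \emph{is} a theorem of $KP$; or to apply $\Sigma_{1}$-collection a second time to bound the existential witnesses of the defining formula by some $c\in M$, after which the graph becomes genuinely $\Sigma_{0}$ in the parameter $c$ and $\Sigma_{0}$-separation finishes. A marginally cleaner route for the converse, avoiding the wellorder entirely, is to send $x\in a$ to the least $\alpha$ with $\exists y\in S_{\alpha}[\dot E]\,\varphi(x,y,p)$ and take $b=S_{\beta}[\dot E]$ for $\beta=\sup$ of the range; but your version via the $\Sigma_{1}$ Skolem function is equally standard.
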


\begin{rem*}
If $M$ is a $0$-countably iterable premouse without a largest cardinal,
then $M$ is admissible. The proof of this uses the Condensation Lemma.
\end{rem*}
The following lemma is a standard result, whose proof we omit.
\begin{lem}
\label{lem:Largest Cardinal Lemma}Suppose that $M$ and $N$ are
premice and let $i:M\rightarrow N$ be cofinal and $r\Sigma_{1}$-elemenatary.
Then 
\[
M\models"\delta\text{ is a cardinal}"\iff N\models"i(\delta)\text{ is a cardinal}".
\]
Moreover, if 
\[
M\models"\delta\text{ is the largest cardinal}",
\]
then 
\[
N\models"i(\delta)\text{ is the largest cardinal}".
\]
If $\delta$ is $M$-regular, then $i(\delta)$ is $N$-regular.
\end{lem}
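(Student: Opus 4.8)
The plan is to transport each of the three statements across $i$ after pinning down its first-order complexity over a premouse. The basic tool is the standard fact that a cofinal $\Sigma_0$-elementary map between premice is $\Sigma_1$-elementary in \emph{both} directions: since $i$ is $r\Sigma_1$-elementary it is in particular $\Sigma_0$-elementary, and given a witness $c\in N$ to a $\Sigma_1$ assertion $\exists y\,\psi(y,i(\vec a))$ (with $\psi$ being $\Sigma_0$), cofinality supplies $d\in M$ with $c\in i(d)$, so that $N\models\exists y\in i(d)\,\psi(y,i(\vec a))$, and $\Sigma_0$-elementarity pulls this $\Sigma_0$ statement back to $M$. Combined with the trivial upward direction, this gives: for every $\Sigma_1$ (hence also every $\Pi_1$) $\mathcal L$-formula $\varphi$ and all $\vec a\in M$, $M\models\varphi(\vec a)\iff N\models\varphi(i(\vec a))$.

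Next I would observe that ``$\delta$ is a cardinal'' is $\Pi_1$ in the parameter $\delta$: its negation reads $\exists\gamma\in\delta\,\exists f\,(f\colon\gamma\to\delta\text{ is a surjection})$, and ``$f\colon\gamma\to\delta$ is a surjection'' is $\Sigma_0$, since all the quantifiers one needs (e.g. $\ran(f)=\delta$ unwinds to $\forall\beta<\delta\,\exists\alpha<\gamma\,(\alpha,\beta)\in f$) are bounded. Likewise ``$\delta$ is regular'' is $\Pi_1$: it asserts that $\delta$ is a limit ordinal and that for all $\gamma<\delta$ and all $f$, if $f\colon\gamma\to\delta$ then $\exists\beta<\delta\,\forall\alpha<\gamma\,f(\alpha)<\beta$, again with a $\Sigma_0$ matrix. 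So by the first paragraph, $M\models$``$\delta$ is a cardinal'' $\iff N\models$``$i(\delta)$ is a cardinal'', and if $\delta$ is $M$-regular then $i(\delta)$ is $N$-regular (in fact the equivalence holds).

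For the largest-cardinal clause, which is only $\Pi_2$ as stated, I would first reformulate it: in any premouse $P$, ``$\delta$ is the largest cardinal of $P$'' is equivalent to ``$\delta$ is a cardinal of $P$, and for every ordinal $\gamma\geq\delta$ of $P$ there is in $P$ a surjection $\delta\to\gamma$'' — if every such $\gamma$ has $P$-cardinality $\leq\delta$ then no cardinal of $P$ exceeds $\delta$, and conversely a $\gamma>\delta$ of $P$-cardinality $>\delta$ would yield a cardinal of $P$ above $\delta$. Now assume $M\models$``$\delta$ is the largest cardinal''. Then $i(\delta)$ is a cardinal of $N$ by the previous paragraph. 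Given any $\gamma'\in\OR^N$ with $\gamma'\geq i(\delta)$, cofinality of $i$ yields $\gamma\in\OR^M$ with $\gamma'<i(\gamma)$, hence $\delta<\gamma$; since $M\models\exists f\,(f\colon\delta\to\gamma\text{ onto})$ is a $\Sigma_1$ statement, $r\Sigma_1$-elementarity produces such an $f\in N$ mapping $i(\delta)$ onto $i(\gamma)$, and as $\gamma'<i(\gamma)$, $N$ uniformly trims $f$ to a surjection $i(\delta)\to\gamma'$. By the reformulation, $N\models$``$i(\delta)$ is the largest cardinal''.

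The only genuinely delicate point — the ``main obstacle'' — is exactly this last clause: ``largest cardinal'' is not $\Pi_1$, so it does not transfer by bare elementarity, and one must trade it for the $\Sigma_1$-expressible ``everything above $\delta$ has size $\leq\delta$'' and then invoke cofinality of $i$ to reach every ordinal of $N$. Everything else is routine $\Sigma_0/\Sigma_1$ bookkeeping.
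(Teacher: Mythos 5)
Your proof is correct. The paper explicitly omits the proof of this lemma as standard, and your argument --- deriving two-way $\Sigma_{1}$-elementarity from $\Sigma_{0}$-elementarity plus cofinality, observing that ``cardinal'' and ``regular'' are $\Pi_{1}$, and handling ``largest cardinal'' via the $\Sigma_{1}$ reformulation ``every ordinal $\geq\delta$ is a surjective image of $\delta$'' together with cofinality of $i$ --- is precisely the standard argument it has in mind.
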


\begin{lem}
\label{lem:Projectum of KP Structures}Suppose $M$ is a $0$-countably
iterable admissible premouse. If $\rho_{1}^{M}<\OR^{M}$, then for
any $M$-cardinal $\delta$, $\rho_{1}^{M}\geq\delta$. In particular,
if $\rho_{1}^{M}<\OR^{M}$, then $\rho_{1}^{M}$ is the largest cardinal
of $M$.
\end{lem}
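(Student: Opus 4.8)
The plan is to establish two things: that $\rho_1^M$ is itself a cardinal of $M$, and that no cardinal of $M$ lies strictly above it.

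For the first point, suppose $\rho:=\rho_1^M$ were not a cardinal of $M$ and fix, in $M$, a surjection $f\colon\gamma\to\rho$ with $\gamma<\rho$ together with a $\Sigma_1^M$-definable set $A\subseteq\rho$ with $A\notin M$. Then $f^{-1}[A]=\{\xi<\gamma:f(\xi)\in A\}$ is $\Sigma_1^M$ (using $f$ as a parameter) and is a subset of the ordinal $\gamma<\rho_1^M$; by minimality of $\rho_1^M$ it lies in $M$, whence $A=f[\,f^{-1}[A]\,]\in M$, a contradiction. So $\rho_1^M$ is a cardinal of $M$, and the ``in particular'' clause will follow once we show that no $M$-cardinal exceeds $\rho_1^M$. (There is in general no reduction to a sound model here, which is why iterability will be needed: for uncountable $\rho_1^M$ one checks, by taking order types in the nested union $M=\bigcup_{\gamma<\rho}\unHull_1^M(\gamma\cup\{p\})$ of hulls — each of which lies in $M$ and, by $M\models KP$ and Lemma \ref{lem:KP in Premice}, has $M$-cardinality $<\rho$ — that a sound admissible $M$ would satisfy $\OR^M\le\rho_1^M$; so the interesting case is that $M$ is unsound.)

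So assume toward a contradiction that $M$ has a cardinal above $\rho$. Then $\delta:=(\rho^{+})^{M}$ exists, $\rho<\delta<\OR^M$, and $\delta$ is regular in $M$. Put $p=p_1^M$ and $H:=\unHull_1^M(\rho\cup\{p\})$, let $\bar M$ be its transitive collapse and $\pi\colon\bar M\to M$ the anticollapse; then $\bar M$ is a sound premouse with $\rho_1^{\bar M}=\rho$, $\crt(\pi)\ge\rho$, and $\pi$ is a near $r\Sigma_1$-embedding. Since $M$ is $(0,\delta+1)$-iterable, the Condensation Lemma applies: the active case is excluded because $\bar M$ is passive, and the ultrapower alternative is ruled out using that $\rho$ is an $M$-cardinal (so no extender on the $M$-sequence produces a surjection of $\rho$ onto a larger ordinal below $\OR^M$). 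Hence $\bar M\trianglelefteq M$, so $\bar M\in M$. As $\bar M$ is sound it carries a $\Sigma_1^{\bar M}$ Skolem function, and since $\bar M\in M$ the predicate ``$\bar M\models\cdots$'' is $\Delta_0$ over $M$ with $\bar M$ as a parameter, so that Skolem function lies in $M$ and witnesses $M\models\lvert\OR^{\bar M}\rvert\le\rho$. Therefore $\OR^{\bar M}<(\rho^{+})^{M}=\delta$, i.e.\ $\bar M=M\mid\beta$ for some $\beta<\delta$.

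It remains to convert this into a contradiction, and this is the step I expect to be the main obstacle. The intended conclusion is that admissibility of $M$ forces a surjection of an ordinal $\le\rho$ onto $\delta$ (equivalently, a cofinal map $\rho\to\delta$) to exist \emph{inside} $M$, contradicting that $\delta$ is regular, hence a cardinal, in $M$. Concretely one runs the hull construction along ordinals $\gamma$ with $\rho\le\gamma<\delta$: each $\unHull_1^M(\gamma\cup\{p\})$ that is an element of $M$ has $M$-cardinality $<\delta$, so its intersection with $\delta$ is a bounded subset of $\delta$ (by regularity of $\delta$), and these bounds are cofinal in $\delta$. The delicate point is that the naive map $\gamma\mapsto\sup\bigl(\delta\cap\unHull_1^M(\gamma\cup\{p\})\bigr)$ refers to the $\Sigma_1$ Skolem function of $M$, which is not in $M$; so, just as in the previous paragraph, one must instead use that the collapses $\bar M_\gamma$ are honest elements of $M$ together with $M\models KP$ (Lemma \ref{lem:KP in Premice}) to assemble the collapsing information into a single member of $M$, and then invoke Lemma \ref{lem:Largest Cardinal Lemma} where regularity or cardinality needs to be transferred along the $\bar M_\gamma$. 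The condensation case analysis and this uniformity are where the real work lies; beyond them I expect no genuinely new idea is required.
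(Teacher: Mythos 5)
Your opening step (that $\rho:=\rho_{1}^{M}$ is itself an $M$-cardinal, by pulling a new $\Sigma_{1}$ subset of $\rho$ back along a surjection $f:\gamma\rightarrow\rho$) is correct and is the standard argument; the paper takes it for granted. The problem is the decisive step, which you yourself flag as incomplete: you never actually produce, inside $M$, a surjection of $\rho$ onto $\delta=(\rho^{+})^{M}$, and the sketch you offer for it does not work. Running hulls $\unHull_{1}^{M}(\gamma\cup\{p\})$ over $\gamma\in[\rho,\delta)$ and taking suprema of their intersections with $\delta$ yields at best a cofinal map from $\delta$ to $\delta$, which contradicts neither the regularity of $\delta$ nor its cardinality; to contradict $\delta=(\rho^{+})^{M}$ you need a \emph{single} hull of $M$-cardinality $\rho$ that captures all of $\delta$. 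Moreover, these hulls are $\Sigma_{1}$ classes over $M$ and need not be elements of $M$ even granting admissibility ($\Sigma_{1}$-bounding puts \emph{total} $\Sigma_{1}$ functions on sets into $M$; a $\Sigma_{1}$ subset of a set, such as the domain of the partial Skolem function, need not be a member), so the parenthetical claim that each hull ``lies in $M$'' is also unjustified.

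The missing idea is the $1$-universality of $p_{1}^{M}$ --- this is precisely where the iterability hypothesis enters, and you never invoke it. Universality gives $\mathcal{P}(\rho)\cap M\subseteq\Hull M1(\rho\cup\{p_{1}^{M},\rho\})$, hence every $\beta<\rho^{+M}$ lies in that hull (code $\beta$ by a wellorder of $\rho$). The statement ``$\beta\in\Hull{M\mid\alpha}1(\rho\cup\{p_{1}^{M},\rho\})$ for some $\alpha$'' is $\Sigma_{1}$, so one application of $\Sigma_{1}$-bounding produces a single $\gamma<\OR^{M}$ with $\rho^{+M}\subseteq\Hull{M\mid\gamma}1(\rho\cup\{p_{1}^{M},\rho\})$; the latter \emph{is} an element of $J(M\mid\gamma)\unlhd M$ of $M$-cardinality $\rho$, which is the contradiction. (This is the paper's proof; it needs no condensation.) Your condensation detour could in fact be completed, but only by adding the same ingredient: with universality, $\mathcal{P}(\rho)\cap M\subseteq\bar{M}=M\mid\beta$ for some $\beta<\delta$, and acceptability then gives $|\mathcal{P}(\rho)\cap M|^{M}\leq\rho$, a contradiction. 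Without universality, the conclusion $\bar{M}\unlhd M$ with $\OR^{\bar{M}}<\delta$ is not by itself contradictory, so as written the proposal has a genuine gap exactly where you predicted the difficulty would lie.
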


\begin{proof}
Let $\rho:=\rho_{1}^{M}$. Assume for the sake of contradiction that
$\rho^{+M}$ exists, i.e. $\rho^{+M}<\OR^{M}$. 
\begin{claim*}
$M\mid\rho^{+M}\subset\Hull M1(\rho\cup\{p_{1}^{M},\rho\})$
\end{claim*}
\begin{proof}
Note that we are not assuming that $M$ is $1$-sound. But since $M$
is iterable, we have that $p_{1}^{M}$ is $1$-universal, i.e. $\mathcal{P}(\rho)\cap M\subset\cH M1(\rho\cup\{p_{1}^{M}\})$,
where $\cH M1(\rho\cup\{p_{1}^{M}\})$ is the transitive collapse
of $\Hull M1(\rho\cup\{p_{1}^{M},\rho\})$. However, since we added
$\rho$ as an element of $\Hull M1(\rho\cup\{p_{1}^{M},\rho\})$,
we actually have that $\mathcal{P}(\rho)\cap M\subset\Hull M1(\rho\cup\{p_{1}^{M},\rho\})$.
\end{proof}
Note that by the upwards absoluteness of $\Sigma_{1}$-formulas the
claim implies 
\[
M\models\forall\beta<\rho^{+M}\exists\alpha(\beta\in\Hull{M\mid\alpha}1(\rho\cup\{p_{1}^{M},\rho\})).
\]
Note that the part of the formula in parentheses is $\Sigma_{1}$(as
the $\Sigma_{1}$-Skolem-Hull is $\Sigma_{1}$-definable). Thus, by
$\Sigma_{1}$-bounding there is some $\gamma<\OR^{M}$ such that 
\[
M\models\forall\beta<\rho^{+M}(\beta\in\Hull{M\mid\gamma}1(\rho\cup\{p_{1}^{M},\rho\})).
\]
But then there is $f\in J(M\mid\gamma)\unlhd M$ such that $f:\rho\rightarrow\rho^{+M}$
is onto. Contradiction! Thus, $\rho$ is the largest cardinal of $M$.
\end{proof}
\begin{cor}
\label{cor:Iterability implies soundness}If $M$ is a $0$-countably
iterable admissible premouse, then $M$ is $1$-sound.
\end{cor}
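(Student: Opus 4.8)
The plan is to derive $1$-soundness of $M$ directly from Lemma \ref{lem:Projectum of KP Structures} together with the universality of the standard parameter.

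First I would recall what needs to be shown: $M$ is $1$-sound means $M = \Hull M1(\rho_1^M \cup \{p_1^M\})$, i.e. every element of $M$ is $r\Sigma_1$-definable over $M$ from parameters in $\rho_1^M \cup \{p_1^M\}$. Since $M$ is a $0$-countably iterable admissible premouse, Lemma \ref{lem:Projectum of KP Structures} applies. I would split into two cases according to whether $\rho_1^M = \OR^M$ or $\rho_1^M < \OR^M$. In the first case, $\rho_1^M = \OR^M$ forces $p_1^M = \emptyset$ and $1$-soundness is immediate (every premouse satisfies $M = \Hull M1(\OR^M)$ trivially, or rather this is the degenerate case where soundness is vacuous). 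In the second case, Lemma \ref{lem:Projectum of KP Structures} tells us that $\rho := \rho_1^M$ is the largest cardinal of $M$ and in particular $\rho^{+M} = \OR^M$, i.e. there is no $M$-cardinal above $\rho$ and $\OR^M$ is a cardinal-successor of (or limit above) $\rho$ in $M$; more precisely the lemma shows $\rho^{+M}$ does not exist as an element of $M$.

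Next I would run essentially the argument already contained in the proof of Lemma \ref{lem:Projectum of KP Structures}. By $1$-universality of $p_1^M$ (which holds because $M$ is iterable), we have $\mathcal P(\rho) \cap M \subseteq \cH M1(\rho \cup \{p_1^M\})$, and after throwing $\rho$ itself into the hull we get $\mathcal P(\rho) \cap M \subseteq \Hull M1(\rho \cup \{p_1^M, \rho\})$. Now let $H := \cH M1(\rho \cup \{p_1^M\})$ be the transitive collapse of the $\Sigma_1$-hull. Since $\rho$ is the largest cardinal of $M$ and $\OR^M$ is a successor cardinal / limit of the appropriate form above $\rho$, every ordinal $\beta < \OR^M$ is the order type of a well-order of $\rho$ coded by a subset of $\rho$ lying in $M$, hence (by the inclusion just noted) coded by an element of $H$; therefore $\OR^M \subseteq H$, and since $H$ is transitive and contains all of $M \mid \rho$ plus all ordinals below $\OR^M$ together with $p_1^M$, condensation-style reasoning gives $H = M$. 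Thus the uncollapsed hull $\Hull M1(\rho \cup \{p_1^M, \rho\})$ is all of $M$, which is exactly $1$-soundness.

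The main obstacle is the step "$\OR^M \subseteq H$": one must argue that the $\Sigma_1$-hull is cofinal in the ordinals of $M$ and captures every ordinal below $\OR^M$, not merely the subsets of $\rho$. Here is where admissibility (i.e. $M \models KP$, hence $\Sigma_1$-bounding) does the real work, exactly as in the proof of the preceding lemma: if the hull failed to contain some $\beta < \OR^M$, then using that $\OR^M = \rho^{+M}$ "from below" one produces, via $\Sigma_1$-bounding, a single level $M \mid \gamma \unlhd M$ over which a surjection $\rho \to \rho^{+M}$ is definable, contradicting that $\rho$ is an $M$-cardinal — so in fact this situation simply cannot arise, and transitivity of the collapse then forces $H = M$. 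I would present the corollary's proof mostly as "this is immediate from Lemma \ref{lem:Projectum of KP Structures} and its proof," isolating only the observation that universality of $p_1^M$ plus the hull argument already performed yields $M = \Hull M1(\rho_1^M \cup \{p_1^M\})$.
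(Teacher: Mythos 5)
Your overall strategy coincides with the paper's: split on whether $\rho_1^M<\OR^M$, invoke Lemma \ref{lem:Projectum of KP Structures} to conclude that $\rho:=\rho_1^M$ is the largest cardinal of $M$, and then show that $H:=\Hull M1(\rho\cup\{p_1^M\})$ is all of $M$. The difference lies in how the last step is carried out, and your explicit justification of the crucial inclusion $\OR^M\subseteq H$ does not work as written. You propose to ``use that $\OR^M=\rho^{+M}$ from below'' and apply $\Sigma_1$-bounding to produce a single level $M\mid\gamma$ over which a surjection $\rho\to\rho^{+M}$ is definable. But $\Sigma_1$-bounding applies only to statements of the form $\forall\beta\in a\,\exists\ldots$ with $a\in M$; here the universal quantifier would have to range over all of $\OR^M$, which is not an element of $M$, so bounding gives nothing --- and since in the present case $\rho^{+M}$ does not exist, there is no $M$-cardinal above $\rho$ to contradict. (That bounding argument is exactly the one in the proof of Lemma \ref{lem:Projectum of KP Structures}, where it is legitimate precisely because one assumes toward a contradiction that $\rho^{+M}\in M$.)

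The intended route to $\OR^M\subseteq H$ is more elementary: since $\rho_1^M<\OR^M$ is witnessed by $p_1^M$, the theory $\Th_{\Sigma_1}^M(\rho\cup\{p_1^M\})$ is not an element of $M$, and this alone forces $H\cap\OR^M$ to be cofinal in $\OR^M$ (a bounded hull would let $M$ compute that theory at some level $M\mid\gamma$); then, because $\rho$ is the largest cardinal, each $\beta\in H\cap\OR^M$ is the range of the $<_M$-least surjection $\rho\to\beta$, which is itself in $H$, so $\beta\subseteq H$ and hence $\OR^M\subseteq H$; finally every element of a premouse is $\Sigma_1$-definable from ordinal parameters, so $H=M$. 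Your alternative line --- $1$-universality puts $\mathcal{P}(\rho)\cap M$ into the collapsed hull, and ordinals of $M$ are coded by wellorders of $\rho$ --- can be repaired, but you must say how the hull recovers $\beta$ from a code $A$ for it: the point is that $M$, being admissible, computes $\otp(A)=\beta$, and ``there is an isomorphism of $(\rho,A)$ with $(\beta,\in)$'' is a $\Sigma_1$ fact with parameters in $H$ whose unique witness $\beta$ therefore lies in $H$. That, not $\Sigma_1$-bounding, is where admissibility would enter your version; and the closing appeal to ``condensation-style reasoning'' should be replaced by the definability-from-ordinals remark, since once $\OR^M\subseteq H$ no condensation is needed.
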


\begin{proof}
If $\rho_{1}^{M}=\OR^{M}$, then $M$ is trivially $1$-sound. If
$\rho_{1}^{M}<\OR^{M}$, then by the previous Lemma $\rho_{1}^{M}$
is the largest cardinal of $M$. But then 
\[
H:=\Hull M1(\rho_{1}^{M}\cup\{p_{1}^{M}\})
\]
is cofinal in $\rho_{1}^{M}=\OR^{M}$. Thus, since $\rho_{0}^{M}\in H$
(as it is the definable as the largest cardinal), $\rho+1\in H$ and
therefore $H$ contains all ordinals of $M$.
\end{proof}
The proof of Lemma \ref{lem:Projectum of KP Structures} also gives
the following version of Lemma \ref{lem:Projectum of KP Structures}
which replaces the iterability assumption by $1$-soundness.
\begin{lem}
\label{lem:1 sound largest cardinal}Suppose $M$ is an admissible
premouse, which is $1$-sound. If $\rho_{1}^{M}<\OR^{M}$, then for
any $M$-cardinal $\delta$, $\rho_{1}^{M}\geq\delta$. In particular,
$\rho_{1}^{M}$ is the largest cardinal of $M$.
\end{lem}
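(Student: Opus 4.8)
The plan is to rerun the proof of Lemma~\ref{lem:Projectum of KP Structures} essentially verbatim, observing that iterability was used there only to secure the $1$-universality of $p_1^M$ (i.e.\ $\Pow{\rho}\cap M\subseteq\cH{M}{1}(\rho\cup\{p_1^M\})$), and that under $1$-soundness one gets something stronger for free, namely the full hull. So set $\rho:=\rho_1^M$ and suppose for contradiction that the conclusion fails: there is an $M$-cardinal $\delta>\rho$. Since cardinals of $M$ are elements of $M$, $\delta<\OR^M$, whence $\rho^{+M}$ exists and $\rho^{+M}\leq\delta<\OR^M$; it thus suffices to derive a contradiction from the mere existence of $\rho^{+M}$.

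By $1$-soundness, $M=\Hull{M}{1}(\rho\cup\{p_1^M\})$, and a fortiori $M=\Hull{M}{1}(\rho\cup\{p_1^M,\rho\})$; in particular every $\beta<\rho^{+M}$ lies in $\Hull{M}{1}(\rho\cup\{p_1^M,\rho\})$. (This is the step where $1$-soundness replaces the universality argument, and it is strictly easier, since soundness yields the actual hull rather than merely its transitive collapse.) Using that the canonical $\Sigma_1$-Skolem function is uniformly $\Sigma_1$-definable over the initial segments $M\mid\alpha$, together with upward $\Sigma_1$-absoluteness, one concludes
\[
M\models\forall\beta<\rho^{+M}\;\exists\alpha\,\bigl(\beta\in\Hull{M\mid\alpha}{1}(\rho\cup\{p_1^M,\rho\})\bigr),
\]
the part in parentheses being $\Sigma_1$ in $\beta$ with parameters $\rho,p_1^M$. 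Since $M\models KP$, $\Sigma_1$-bounding applies (and $\rho^{+M}\in M$), producing a single $\gamma<\OR^M$ with
\[
M\models\forall\beta<\rho^{+M}\,\bigl(\beta\in\Hull{M\mid\gamma}{1}(\rho\cup\{p_1^M,\rho\})\bigr).
\]
Exactly as in Lemma~\ref{lem:Projectum of KP Structures}, this yields some $f\in J(M\mid\gamma)\unlhd M$ which is a surjection $f\colon\rho\to\rho^{+M}$, contradicting that $\rho^{+M}$ is an $M$-cardinal. Hence no $M$-cardinal exceeds $\rho$, i.e.\ $\rho\geq\delta$ for every $M$-cardinal $\delta$.

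For the final clause, recall the standard fact that for an acceptable premouse $M$ with $\rho_1^M<\OR^M$, the ordinal $\rho_1^M$ is itself a cardinal of $M$; together with the previous paragraph, $\rho_1^M$ is then the largest cardinal of $M$. I do not anticipate a genuine obstacle: the one point that requires care is the uniform $\Sigma_1$-definability of the Skolem functions across the levels $M\mid\alpha$, so that the first displayed $\Sigma_1$-reflection is legitimate --- but this is routine fine structure, and is exactly the move already used in the proof of Lemma~\ref{lem:Projectum of KP Structures} --- together with the trivial check that $1$-soundness really does stand in for $1$-universality, which it does since it directly delivers $M=\Hull{M}{1}(\rho\cup\{p_1^M\})$.
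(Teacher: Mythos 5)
Your proposal is correct and is exactly the route the paper intends: the paper gives no separate proof, remarking only that the argument for Lemma \ref{lem:Projectum of KP Structures} goes through with $1$-soundness in place of iterability, and you correctly identify that iterability entered that proof solely to obtain $1$-universality of $p_{1}^{M}$, which $1$-soundness supersedes by giving $M=\Hull M1(\rho_{1}^{M}\cup\{p_{1}^{M}\})$ outright. The remaining steps ($\Sigma_{1}$-reflection to a level $M\mid\alpha$, $\Sigma_{1}$-bounding to find a single $\gamma$, and the resulting surjection $f:\rho_{1}^{M}\rightarrow\rho_{1}^{+M}$ in $J(M\mid\gamma)\unlhd M$) match the paper's argument verbatim.
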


\subsection{The Construction of $N$\label{subsec:The-Construction-of N}}
\begin{defn}
Let $M_{\Sigma_{1}-1}^{\ad}$ be the least $(0,\omega_{1}+1)$-iterable
and sound premouse which models $KP\land\exists\delta(\delta\text{ is }\Sigma_{1}\text{-Woodin})$.
\end{defn}

Note that $\rho_{2}^{M_{\Sigma_{1}-1}^{\ad}}=\omega$, while by Lemma
\ref{lem:Largest Cardinal Lemma}, $\rho_{1}^{M_{\Sigma_{1}-1}^{\ad}}=\lgcd(M_{\Sigma_{1}-1}^{\ad})$.
Moreover, by our previous remark $(0,\omega_{1}+1)$-iterability is
equivalent to $(1,\omega_{1}+1)$-iterability for $M_{\Sigma_{1}-1}^{\ad}$.

\begin{defn}
Let $M$ be an admissible premouse such that $\rho_{1}^{M}<\OR^{M}$.
We let $T^{M}\subset\rho_{1}^{M}$ be the set of ordinals coding $\Th_{\Sigma_{1}}^{M}(\rho_{1}^{M}\cup\{p_{1}^{M}\})$.
\end{defn}

Note that by Lemma \ref{lem:Projectum of KP Structures} $\rho_{1}^{M}$
is the largest cardinal of $M$. Moreover, by definition $T^{M}\notin M$,
but $T^{M}\in\Sigma_{1}^{M}(M)$. However, for every $\alpha<\rho_{1}^{M}$,
$T^{M}\cap\alpha\in M$. Thus, when $\pi$ is an embedding with domain
$M$ which is continuous at $\rho_{1}^{M}$, we write $\pi(T)$ for
$\bigcup_{\alpha<\rho_{1}^{M}}\pi(T^{M}\cap\alpha)$. 
\begin{lem}
Let $M$ and $N$ be admissible $1$-sound premice which both project,
i.e. $\rho_{1}^{M}<\OR^{M}$ and $\rho_{1}^{N}<\OR^{N}$. Let $\pi:M\rightarrow N$
be a $1$-elementary embedding and suppose that $\pi$ is continuous
at $\rho_{1}^{M}$. Then $\pi(T^{M})=T^{N}$.
\end{lem}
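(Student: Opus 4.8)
The plan is to show that $\pi$ sends the $\Sigma_1$-theory of $M$ (with parameters in $\rho_1^M \cup \{p_1^M\}$) to that of $N$, level by level. First I would observe that, since $M$ and $N$ are $1$-sound, we have $M = \Hull M1(\rho_1^M \cup \{p_1^M\})$ and likewise for $N$, and that $p_1^N = \pi(p_1^M)$: indeed $\pi$ is $1$-elementary and cofinal (being continuous at $\rho_1^M = \lgcd(M)$, hence cofinal in $\OR^N$ by Lemma \ref{lem:Largest Cardinal Lemma}, since $\rho_1^N = \lgcd(N)$ by Lemma \ref{lem:1 sound largest cardinal}), so $\pi$ is $r\Sigma_1$-elementary and preserves the standard parameter by the usual argument. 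Next, for $\alpha < \rho_1^M$, note $T^M \cap \alpha$ is an element of $M$ coding $\Th_{\Sigma_1}^M(\alpha \cup \{p_1^M\})$, where I must be a little careful: $T^M$ codes $\Sigma_1$-truth for parameters drawn from $\rho_1^M \cup \{p_1^M\}$, and $T^M \cap \alpha$ is the restriction to (codes of formulas with) parameters below $\alpha$ together with $p_1^M$. The key point is that "$t$ codes $\Th_{\Sigma_1}^{M}(\alpha \cup \{p_1^M\})$" is expressible over $M$ by a formula of bounded complexity in the parameters $t, \alpha, p_1^M$ — specifically, each individual $\Sigma_1$ assertion $M \models \varphi(\vec{\beta}, p_1^M)$ with $\vec\beta < \alpha$ is itself $\Sigma_1$ over $M$, and its negation, being the assertion that no witness exists, is $\Pi_1$; so $t = T^M \cap \alpha$ is the unique set satisfying a conjunction of $\Sigma_1$ and $\Pi_1$ conditions about which formula-codes it contains.

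Then I would apply $\pi$. Since $\pi$ is $r\Sigma_1$-elementary and preserves $\Pi_1$ statements downward is not automatic, but here the relevant $\Pi_1$ assertion "$\varphi(\vec\beta, p_1^M)$ fails in $M$" does transfer: if $M \models \neg\exists w\, \theta(w, \vec\beta, p_1^M)$ with $\theta$ bounded, then because $\pi$ is $1$-elementary (not merely $r\Sigma_1$), $N \models \neg \exists w\, \theta(w, \pi(\vec\beta), \pi(p_1^M))$ — here I should use that full $1$-elementarity is available by hypothesis, not just the $r\Sigma_1$-version, so both $\Sigma_1$ and $\Pi_1$ formulas are preserved in both directions for parameters in $\ran(\pi)$. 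Hence $\pi(T^M \cap \alpha)$ codes $\Th_{\Sigma_1}^N(\pi(\alpha) \cup \{p_1^N\})$, i.e. $\pi(T^M \cap \alpha) = T^N \cap \pi(\alpha)$. Taking the union over $\alpha < \rho_1^M$ and using that $\pi$ is continuous at $\rho_1^M$ — so that $\sup_{\alpha < \rho_1^M}\pi(\alpha) = \rho_1^N$ — gives
\[
\pi(T^M) = \bigcup_{\alpha < \rho_1^M} \pi(T^M \cap \alpha) = \bigcup_{\alpha < \rho_1^M} T^N \cap \pi(\alpha) = T^N,
\]
which is the claim, recalling that $\pi(T^M)$ is by definition this union.

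The main obstacle I anticipate is bookkeeping around exactly which parameters the theory $T^M$ ranges over and ensuring the "$t$ codes the restricted $\Sigma_1$-theory" predicate really has the low complexity claimed, uniformly in $\alpha$, so that $\Sigma_1$-elementarity of $\pi$ (plus its $\Pi_1$-preservation coming from full $1$-elementarity) suffices to push it up. A secondary point to check carefully is continuity: $\pi$ continuous at $\rho_1^M$ means $\pi[\rho_1^M]$ is cofinal in $\rho_1^N$, which is what lets the union on the right collapse to all of $T^N$ rather than a proper initial segment; without it one would only get $\pi(T^M) = T^N \cap \sup\pi[\rho_1^M]$. Both issues are routine once stated precisely, so the proof should be short.
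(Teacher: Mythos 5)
Your overall strategy is the same as the paper's: show $\pi(T^{M}\cap\alpha)=T^{N}\cap\pi(\alpha)$ for each $\alpha<\rho_{1}^{M}$ and then take the union, using that $\pi\restriction\rho_{1}^{M}$ is cofinal in $\rho_{1}^{N}$. But your justification of the key step has a genuine gap. The transfer you actually carry out is code by code: for each $\vec{\beta}<\alpha$ you match the truth value of $\varphi(\vec{\beta},p_{1}^{M})$ in $M$ with that of $\varphi(\pi(\vec{\beta}),p_{1}^{N})$ in $N$. This only determines $\pi(T^{M}\cap\alpha)$ at codes lying in $\ran(\pi)$; it yields $\pi[T^{M}\cap\alpha]=T^{N}\cap\pi[\alpha]$, not $\pi(T^{M}\cap\alpha)=T^{N}\cap\pi(\alpha)$. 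Since $\pi\restriction\alpha$ is in general far from surjective onto $\pi(\alpha)$ (think of an ultrapower map), the codes in $\pi(\alpha)\setminus\ran(\pi)$ are exactly where the content of the lemma lies, and your argument says nothing about them. Relatedly, the claim that ``$t$ codes $\Th_{\Sigma_{1}}^{M}(\alpha\cup\{p_{1}^{M}\})$'' is a conjunction of $\Sigma_{1}$ and $\Pi_{1}$ conditions is not right as stated: as a single first-order formula it reads
\[
\forall c<\alpha\,\bigl[(c\in t\rightarrow\exists w\,\theta_{c}(w))\wedge(c\notin t\rightarrow\forall w\,\neg\theta_{c}(w))\bigr],
\]
whose first conjunct is $\Pi_{2}$, and $\Pi_{2}$ statements are not preserved by a merely $\Sigma_{1}$-elementary map; the infinite conjunction of the individual $\Sigma_{1}$ and $\Pi_{1}$ assertions is not itself a formula you can push through $\pi$ to control non-range codes.

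There are two ways to close this, and the paper takes the first. (i) Work in the $r\Sigma_{2}$ hierarchy: the function $(\alpha,q)\mapsto\Th_{1}^{M}(\alpha\cup\{q\})$ is the predicate $\dot{T}_{1}$ built into the language at level $2$, so a $1$-embedding, being $r\Sigma_{2}$-elementary, preserves the single assertion ``$t=\Th_{1}^{M}(\alpha\cup\{p_{1}^{M}\})$'' outright; this pins down $\pi(t)$ on all of $\pi(\alpha)$ at once. (ii) If you insist on using only $\Sigma_{1}$- and $\Pi_{1}$-preservation, you must invoke admissibility, which you never do in this step even though it is a hypothesis of the lemma: since $t\in M$, $\Sigma_{1}$-bounding converts the $\Pi_{2}$ conjunct $\forall c\in t\,\exists w\,\theta_{c}(w)$ into the $\Sigma_{1}$ statement $\exists v\,\forall c\in t\,\exists w\in v\,\theta_{c}(w)$, so that ``$t=T^{M}\cap\alpha$'' genuinely becomes $\Sigma_{1}\wedge\Pi_{1}$ and is preserved as a whole statement about $\pi(t)$. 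Either repair is short, but one of them is needed; the pointwise transfer plus the continuity argument at the end does not by itself prove the lemma. The final step (cofinality of $\pi\restriction\rho_{1}^{M}$ in $\rho_{1}^{N}$ collapsing the union to $T^{N}$) is fine and matches the paper.
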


\begin{proof}
A $r\Sigma_{2}$-formula is of the form $\exists a\exists b(\dot{T}_{1}(a,b)\land\psi(a,b,\vec{v}))$,
where $\dot{T}_{1}^{M}(a,b)$ holds iff $a=\langle\alpha,q\rangle$
for some $\alpha<\rho_{1}^{M}$and $q\in M$, and $b=\Th_{1}^{M}(\alpha\cup\{q\})$.
Since $\pi(\rho_{1}^{M})=\rho_{1}^{N}$, the $r\Sigma_{2}$-elementarity
of $\pi$ implies that $\pi(T^{M})\subset T^{N}$. On the other hand,
since $\pi\restriction\rho_{1}^{M}$ is cofinal in $\pi(\rho_{1}^{N})$,
$\pi(T^{N})\subset T^{M}$.
\end{proof}
\begin{lem}
In $M_{\Sigma_{1}-1}^{\ad}$, let $\kappa$ be $<\delta,T^{M_{\Sigma_{1}-1}^{\ad}}$-reflecting.
Let $\delta$ be the largest cardinal of $M_{\Sigma_{1}-1}^{\ad}$
and let $H:=\Hull M1(\kappa\cup\{\delta\})$. Then,
\begin{itemize}
\item $H\cap\delta=\kappa$, and
\item if $\pi:N\cong H\prec_{\Sigma_{1}}M_{\Sigma_{1}-1}^{\ad}$ is the
transitive collapse of $H$, then $N\models KP$.
\end{itemize}
\end{lem}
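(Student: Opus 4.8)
The plan is to prove the two assertions separately, in both cases using the embeddings witnessing that $\kappa$ is $<\delta,T^{M}$-reflecting. Write $M:=M_{\Sigma_{1}-1}^{\ad}$; recall that $\delta=\lgcd(M)=\rho_{1}^{M}$, that $M$ is $1$-sound by Corollary~\ref{cor:Iterability implies soundness} (and sound), that $T^{M}\in\Sigma_{1}^{M}(M)$ with parameter $p_{1}^{M}$, and that $\kappa$ being $<\delta,T^{M}$-reflecting means: for every $\eta<\delta$ there is an extender $E_{\eta}$ on the $M$-sequence with $\crt(E_{\eta})=\kappa$ such that the ultrapower map $i_{\eta}\colon M\to M_{\eta}:=\Ult{M}{E_{\eta}}$ satisfies $i_{\eta}(\kappa)>\eta$, $M_{\eta}\mid\eta=M\mid\eta$, and $i_{\eta}(T^{M}\cap\kappa)\cap\eta=T^{M}\cap\eta$. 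Since $\crt(E_{\eta})=\kappa<\rho_{1}^{M}$, the $\Sigma_{0}$- and $\Sigma_{1}$-ultrapowers of $M$ by $E_{\eta}$ coincide, so each $i_{\eta}$ is $r\Sigma_{2}$-elementary and cofinal; hence $i_{\eta}(\delta)=\lgcd(M_{\eta})$ by Lemma~\ref{lem:Largest Cardinal Lemma}, and $\kappa$ is measurable, so inaccessible, in $M$.

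For the first bullet point, $\kappa\subseteq H\cap\delta$ is clear, so suppose toward a contradiction that $\gamma\in H$ with $\kappa\le\gamma<\delta$; choose $\eta$ with $\gamma<\eta<\delta$ large enough to absorb the G\"odel codes appearing below, and set $i:=i_{\eta}$, $M^{*}:=M_{\eta}$, $\delta^{*}:=i(\delta)$. Computing with the $\Sigma_{1}$-Skolem function and using $i\restriction\kappa=\id$ together with $r\Sigma_{1}$-elementarity one obtains $i[H]=\Hull{M^{*}}{1}(\kappa\cup\{\delta^{*}\})=:H^{*}$. Since $i\restriction\kappa=\id$ and $i(\xi)\ge i(\kappa)>\eta$ whenever $\xi\ge\kappa$, no element of $i[H]$ is an ordinal in the interval $[\kappa,\eta)$; thus $H^{*}\cap[\kappa,\eta)=\emptyset$. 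On the other hand, the assertion ``$\gamma\in\Hull{M}{1}(\kappa\cup\{\delta\})$'' is $\Sigma_{1}$ over $M$ in the parameters $\gamma,\kappa$ (both $<\eta$) and $\delta$; here one must express the parameter $\delta$ $\Sigma_{1}$-definably, which I would do by showing $\delta\in\Hull{M}{1}(\{p_{1}^{M}\})$ (a consequence of the minimality and soundness of $M$), or by a more careful argument directly with $T^{M}$, after which this becomes a $\Sigma_{1}$-assertion with G\"odel code below $\eta$, hence in $T^{M}\cap\eta$. Then $i(T^{M}\cap\kappa)\cap\eta=T^{M}\cap\eta$, together with the fact that $i(T^{M}\cap\kappa)$ codes $\Th_{\Sigma_{1}}^{M^{*}}(i(\kappa)\cup\{i(p_{1}^{M})\})$ (which uses $r\Sigma_{2}$-elementarity of $i$, as ``$X$ codes this $\Sigma_{1}$-theory'' is $\Pi_{2}$), transfers the assertion to $M^{*}$: $\gamma\in\Hull{M^{*}}{1}(\kappa\cup\{\delta^{*}\})=H^{*}$, contradicting $H^{*}\cap[\kappa,\eta)=\emptyset$. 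Hence $H\cap\delta=\kappa$; consequently $\crt(\pi)=\kappa$, $\pi\restriction\kappa=\id$, $\pi(\kappa)=\delta$, and $\kappa=\lgcd(N)$ (pull back ``$\delta$ is a cardinal'' and, for each $\beta\in H$ above $\delta$, the surjection onto $\beta$ from a smaller ordinal supplied by the $\Sigma_{1}$-Skolem function, which lies in $\Hull{M}{1}(\{\beta\})\subseteq H$).

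For the second bullet point, by Lemma~\ref{lem:KP in Premice} (and $\kappa=\lgcd(N)$) it suffices to show that every $f\in\Sigma_{1}^{N}(N)$ which is a function with $\dom(f)=\kappa$ lies in $N$. Fix such an $f$, with $\Sigma_{1}$ graph-formula $\chi(\cdot,\cdot,p)$, $p\in N$, and put $\bar p:=\pi(p)\in H$. Using the $\Delta_{1}^{N}$-definability of the graph of a $\Sigma_{1}$-definable total function, together with $\Sigma_{1}$-elementarity of $\pi$ and $\pi\restriction\kappa=\id$, one checks that for every $x<\kappa$ the relation $\chi(x,\cdot,\bar p)$ is single-valued over $M$ with unique value $\pi(f(x))$; hence $F:=\{\langle x,y\rangle : x<\kappa\wedge M\models\chi(x,y,\bar p)\}$ is a $\Sigma_{1}^{M}(M)$ function with $\dom(F)=\kappa\in M$, so $F\in M$ by Lemma~\ref{lem:KP in Premice} applied inside $M$ (using $M\models KP$ and $\delta=\lgcd(M)$). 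The remaining point is $F\in H$: granting it, $\pi^{-1}(F)\in N$, and since $\dom(F)=\kappa\subseteq H$ and $\ran(F)\subseteq\pi[N]=H$, a direct computation yields $\pi^{-1}(F)=f$, so $f\in N$.

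The main obstacle is precisely this last step, $F\in H$ — equivalently, that the $\Sigma_{1}$-hull $H$ is closed under the construction of $F$ — and it is here that ``$<\delta,T^{M}$-reflecting'' (rather than merely ``$<\delta$-strong'') is indispensable. The reflection of $T^{M}$ should force $\rho_{1}^{N}=\kappa$, that $N$ is a $1$-sound admissible-shaped premouse with largest cardinal $\kappa$, and that the $\Sigma_{1}$-theory of $N$ in parameters from $\kappa\cup\{p_{1}^{N}\}$ is coded by $T^{M}\cap\kappa$; from this, together with the relation between $\pi(p_{1}^{N})$ and $p_{1}^{M}$, one extracts that $F$ is $\Sigma_{1}$-definable over $M$ from $\bar p$ and $\delta$ alone, so $F\in H$. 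I expect the bookkeeping with the standard parameters of $M$ and $N$ under $\pi$, and the verification that $T^{M}$ genuinely pins down $\delta$ (so that the reflection argument in the first part is legitimate), to be the most delicate parts of the write-up.
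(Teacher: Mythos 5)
Your treatment of the first bullet point is essentially the argument of the paper: one fixes a $\Sigma_{1}$ definition of a putative $\gamma\in H\cap[\kappa,\delta)$ from parameters below $\kappa$ together with $\delta$, observes that its code lies in $T^{M}\cap\lambda$ for some $\lambda<\delta$ with $\lambda>\gamma$, and uses a $\lambda,T^{M}$-strong extender $E$ with critical point $\kappa$ to transfer both the $\Sigma_{1}$ witnessing statement (via $T^{\fu ME1}\cap\lambda=i_{E}(T^{M})\cap\lambda=T^{M}\cap\lambda$) and the $r\Pi_{1}$ uniqueness statement (via $r\Sigma_{2}$-elementarity of $i_{E}$) to the ultrapower, forcing $i_{E}(\gamma)=\gamma$; this is absurd since $i_{E}(\gamma)\geq i_{E}(\kappa)>\lambda>\gamma$. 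Your worry about pinning down $\delta$ is harmless: $M$ is sound with $\rho_{1}^{M}=p_{1}^{M}=\delta$ and $T^{M}$ is coded with a constant symbol $\dot{\delta}$, so $\delta$ is already built into the theory.

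The second bullet point has a genuine gap, and it sits exactly at the step you flag as the main obstacle. Your set $F$ has $\dom(F)=\kappa$, and no such set can lie in $H$: by the first bullet point $H\cap\delta=\kappa$, so $\kappa\notin H$, while $H\prec_{\Sigma_{1}}M$ is closed under $x\mapsto\dom(x)$, so $F\in H$ would give $\kappa\in H$. Thus no bookkeeping with $p_{1}^{N}$ versus $p_{1}^{M}$ can yield $F\in H$ --- the object you are trying to place in the hull ``knows'' $\kappa$, and $\kappa$ is precisely what the hull cannot see. (A smaller issue: single-valuedness of $\chi(x,\cdot,\bar{p})$ over $M$ is $\Pi_{1}$ and does not transfer upward along $\pi$; one should pass to $<_{M}$-least witnesses.) The correct use of the reflecting extenders is to propagate the totality statement itself from $\kappa$ to $\delta$: the statement $\forall\alpha<\kappa\exists y\,\chi(\alpha,y,\bar{p})$ holds in $M$ instance by instance, and if it failed at some $\xi\in[\kappa,\delta)$, then $\exists y\,\chi(\xi,y,\bar{p})\notin T^{M}\cap\lambda$ for suitable $\lambda\in(\xi,\delta)$, and a $\lambda,T^{M}$-strong extender carries this failure into the ultrapower, contradicting that the ultrapower satisfies the statement for all $\alpha<i_{E}(\kappa)>\xi$ (by $r\Sigma_{2}$-elementarity). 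Hence $M\models\forall\alpha<\delta\exists y\,\chi(\alpha,y,\bar{p})$; now $\Sigma_{1}$-bounding in $M$ yields a bound, and the bounded statement is $\Sigma_{1}$ in the parameters $\delta,\bar{p}\in H$, so it reflects to $N$. In your language: you must replace $F$ by a function with domain $\delta$ (whose preimage under $\pi$ then has domain $\kappa$ and agrees with $f$), and proving that such an extension is total on $\delta$ is where the $<\delta,T^{M}$-reflection is actually spent.
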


\begin{proof}
Let us first proof that $H\cap\delta=\kappa$. Suppose for the sake
of contradiction that there is $\xi\in H\cap[\kappa,\delta)$. Let
$\varphi_{\xi}$ be a $\Sigma_{1}$-formula and $\alpha_{1},...,\alpha_{n}\in\kappa$
be such that $\xi$ is the unique $\eta$ such that $M\models\varphi_{\xi}(\xi,\alpha_{1},...,\alpha_{n},\delta)$.
Note that the $\varphi_{\xi}(\xi,\alpha_{1},...,\alpha_{n},\delta)\in\Th_{1}^{M}(\delta\cup\{\delta\})$.
Thus, there is some $\lambda<\delta$ such that the code for $\varphi_{\xi}(\xi,\alpha_{1},...,\alpha_{n},\delta)$
is below $\lambda$, i.e. $\varphi_{\xi}(\xi,\alpha_{1},...,\alpha_{n},\delta)\in T^{M}\cap\lambda$.
Let $E\in\E^{M}$ be an extender witnessing that $\kappa$ is $\lambda,T^{M}$-strong.
Let $i_{E}:M\rightarrow\fu ME1$ be the canonical ultrapower embedding.
Note that we are taking a $1$-ultrapower, since $\rho_{2}^{M}=\omega<\kappa=\crt(E)<\delta=\rho_{1}^{M}$.
In particular, $i_{E}$ is a $r\Sigma_{2}$-elementary embedding.
By the previous Lemma this means that $i_{E}(T^{M})=T^{\fu ME1}$.
But since $E$ is $\lambda,T^{M}$-strong, we have $T^{\fu ME1}\cap\lambda=i_{E}(T^{M})\cap\lambda=T^{M}\cap\lambda$.
Thus, $\varphi_{\xi}(\xi,\alpha_{1},...,\alpha_{n},\delta)\in T^{\fu ME1}\cap\lambda$
which means that $\fu ME1\models\varphi_{\xi}(\xi,\alpha_{1},...,\alpha_{n},\delta)$.
Moreover, since $M\models\varphi_{\xi}(\xi,\alpha_{1},...,\alpha_{n},\delta)$
also by $r\Sigma_{2}$-elementarity, $\fu ME1\models\varphi_{\xi}(i_{E}(\xi),\alpha_{1},...,\alpha_{n},\delta)$.
But since $\xi$ was the unique witness for $\exists x\varphi_{\xi}(x,\alpha_{1},...,\alpha_{n},\delta)$
in $M$, we have 
\[
M\models\forall x(x\neq\xi\rightarrow\neg\varphi_{\xi}(x,\alpha_{1},...,\alpha_{n},\delta))
\]
but this is $r\Pi_{1}$and thus, 
\[
\fu ME1\models\forall x(x\neq i_{E}(\xi)\rightarrow\neg\varphi_{\xi}(x,\alpha_{1},...,\alpha_{n},\delta)).
\]
But this means $i_{E}(\xi)=\xi$. Contradiction!

Let us now verify that $N\models KP$. By what we have shown so far
$\pi(\kappa)=\delta$. In particular, by Lemma \ref{lem:Largest Cardinal Lemma}
$\kappa$ is the largest cardinal of $N$. So by Lemma \ref{lem:KP in Premice}
it suffices to check $\Sigma_{1}$-bounding. Suppose that 
\[
N\models\forall\alpha<\kappa\exists y\varphi(\alpha,y,\bar{p})
\]
where $\varphi$ is a $\Sigma_{1}$-formula and $\bar{p}\in N$. Let
$p=\pi(\bar{p})$. Then 
\[
M\models\forall\alpha<\kappa\exists y\varphi(\alpha,y,p).
\]
We aim to see that 
\[
M\models\forall\alpha<\delta\exists y\varphi(\alpha,y,p).
\]
Suppose for the sake of contradiction that there is some $\xi\in[\kappa,\delta)$
such that $M\models\forall y\neg\varphi(\xi,y,p)$. This means that
$\exists y\varphi(\xi,y,p)\notin T^{M}$. But then there is some $\lambda\in(\xi,\delta)$
such that this is witnessed by $T^{M}\cap\lambda$, i.e. the formula
$\exists y\varphi(\xi,y,p)\notin T^{M}\cap\lambda$ even though $\xi,p\in M\mid\lambda$.
(Note that this is not literally true as $p\in M\setminus M\mid\delta$
is possible. However, since $M$ is $1$-sound and $\rho_{1}^{M}=\delta$
and $p_{1}^{M}=\delta$ we may think of $p$ as a tuple $(\vec{\alpha},\delta)$,
where $\vec{\alpha}\in\fin\delta$. Since we coded $T^{M}$ as a theory
with constant symbol $\dot{\delta}$ it suffices if we pick $\lambda>\sup\{\xi,\vec{\alpha}\}$.)
Let $E\in\E^{M}$ be $\lambda,T^{M}$-strong. Then as in the previous
argument, $\exists y\varphi(\xi,y,i_{E}(p))"\notin"T^{\fu ME1}\cap\lambda$.
However, since $i_{E}$ is $r\Sigma_{2}$-elementary, we have that
\[
\fu ME1\models\forall\alpha<i_{E}(\kappa)\exists y\varphi(\alpha,y,p).
\]
Thus, since $i_{E}(\kappa)>\xi$, 
\[
\fu ME1\models\exists y\varphi(\xi,y,i_{E}(p)).
\]
Contradiction!
\end{proof}
\begin{lem}
\label{lem:N existence lemma}Let $N$ be as in the previous Lemma.
Then $N\lhd M$ and $N$ is an admissible premouse with largest cardinal
$\kappa$ which is Woodin in $N$.
\end{lem}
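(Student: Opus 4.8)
The plan is to assemble the statement from four ingredients: $(i)$ $N\models KP$ and $\kappa:=\lgcd(N)$, both of which were already obtained in the previous lemma (and its proof, where $\pi(\kappa)=\delta$ is shown); $(ii)$ $N\lhd M$, to be derived from the Condensation Lemma; $(iii)$ $N$ is passive, which is immediate from $(i)$, $(ii)$ and the earlier remark that no active premouse models $KP$, so that $N$ is an admissible premouse; and $(iv)$ the heart of the matter, that $\kappa$ is Woodin in $N$, which I would read off from the $\Sigma_1$-Woodinness of $\delta$ in $M$ via the $\Sigma_1$-elementarity of the uncollapse map $\pi\colon N\to M$.

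For $(ii)$, recall that $\pi$ is the inverse of the transitive collapse of $H=\Hull M1(\kappa\cup\{\delta\})$, with $\pi\restriction\kappa=\id$, $\pi(\kappa)=\delta$, $H\cap\delta=\kappa$, and that $M$ is $1$-sound with $\rho_1^M=\delta=\lgcd(M)$. First I would record the routine fine-structural facts that $N=\Hull N1(\kappa\cup\{p_1^N\})$ with $\pi(p_1^N)=p_1^M$, that $\rho_1^N=\kappa=\crt(\pi)$ (using Lemma \ref{lem:1 sound largest cardinal} inside $N$ and the fact, from the previous lemma, that $\kappa$ is an $N$-cardinal), and hence that $N$ is $1$-sound. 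Since $\kappa$ is $<\delta,T^M$-reflecting it carries an extender of $\E^M$ with critical point $\kappa$, so $\kappa$ is inaccessible in $M$ and therefore $M\mid\kappa$ is passive; the Condensation Lemma then applies to $\pi$ with no anomalous alternatives and yields $N\lhd M$. Consequently $N=M\mid\OR^N$, and as $N\models KP$ it is passive, so $N$ is an admissible premouse with largest cardinal $\kappa$.

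For $(iv)$, fix $A\in N$ with $A\subseteq\kappa$; it suffices to verify the reflection clause of Woodinness in $N$ against such $A$, since $\kappa$ is $N$-regular (Lemma \ref{lem:Largest Cardinal Lemma}) and, by the instance $A=\emptyset$ below, a limit of measurables of $N$, hence inaccessible in $N$. Put $B:=\pi(A)\subseteq\delta$, so $B\in H\subseteq M$. Since $\delta$ is $\Sigma_1$-Woodin in $M$ and $B\in M$ is in particular in $\Sigma_1^M(M)$, there is $\mu<\delta$ which is $<\delta,B$-reflecting in $M$. The crucial point is that, because $M\models KP$ and every extender witnessing $<\delta,B$-reflection lies in $M\mid\delta$, the assertion ``there is $\mu<\delta$ which is $<\delta,B$-reflecting'' is expressible by a $\Sigma_1$-formula over $M$ in the parameters $\delta$ and $B$: by $\Sigma_1$-collection in $M$ a single level $M\mid\gamma$, $\gamma<\OR^M$, contains witnessing extenders for all $\lambda<\delta$, and the remaining quantifiers are bounded. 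As $H\prec_{\Sigma_1}M$ and $\delta,B\in H$, this $\Sigma_1$-assertion holds in $H$; applying the transitive collapse, which sends $\delta$ to $\kappa$ and $B$ to $A$ and the witnessing $M\mid\delta$-extenders to extenders on $\E^N$ of index $<\kappa$, gives $N\models$``there is $\mu<\kappa$ which is $<\kappa,A$-reflecting''. Since $A\subseteq\kappa$ with $A\in N$ was arbitrary, $N\models$``$\kappa$ is Woodin''.

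The main obstacle is the verification in $(iv)$ that ``$\exists\mu<\delta\,(\mu$ is $<\delta,B$-reflecting in $M)$'' is genuinely $\Sigma_1$ over $M$ — i.e. that $\Sigma_1$-collection really delivers a single bound $\gamma<\OR^M$ for the extenders needed at all $\lambda<\delta$ — and the check that the transitive collapse carries the witnessing extenders to extenders on the sequence $\E^N$ with the correct critical points and lengths, so that what is reflected into $N$ is exactly Woodinness in the premouse sense. The fine-structural bookkeeping in $(ii)$ (soundness of $N$ and preservation of the standard parameter) is routine but must be carried out.
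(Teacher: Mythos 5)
Your proposal is correct, and its overall strategy --- transfer Woodinness between $\delta$ and $\kappa$ via the $\Sigma_1$-elementarity of $\pi$, and obtain $N\lhd M$ from soundness and agreement below $\kappa$ --- is the paper's. The one genuine difference is the direction of the Woodinness transfer. The paper observes that the \emph{failure} of Woodinness at the largest cardinal is a $\Sigma_1$ fact (an existential over the bad set $A$, with everything else local to $N\mid\kappa$), so if $\kappa$ were not Woodin in $N$, upward $\Sigma_1$-absoluteness along $\pi$ would make $\delta$ non-Woodin in $M$, contradicting its $\Sigma_1$-Woodinness; no collection argument and no bookkeeping about where the witnessing extenders sit is needed. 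You instead pull the positive reflection statement down for each fixed $A$, which is why you must argue that it is $\Sigma_1$ over $M$ (via $\Sigma_1$-collection) and then check that the collapsed witnesses land on $\E^N$ with the right indices --- precisely the obstacles you flag at the end. Both routes work (the witnessing extenders for Woodinness of the largest cardinal may be taken indexed below $\delta$, so the positive statement is in fact local in the parameter $M\mid\delta\in H$, and your collection detour is avoidable), but the paper's contrapositive direction is cleaner. For $N\lhd M$ the paper is terser: it notes $N\mid\kappa=M\mid\kappa$ and $N=J_\alpha(M\mid\kappa)$ and concludes by stacking, whereas you invoke Condensation after checking $\rho_1^N=\kappa=\crt(\pi)$ and $1$-soundness and ruling out the anomalous cases via passivity of $M\mid\kappa$; that is a legitimate, and if anything more careful, version of the same step.
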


\begin{proof}
Note that a failure of $\kappa$ being Woodin in $N$ is a $r\Sigma_{1}$-fact
about $N\mid\kappa$. Since $H\prec_{\Sigma_{1}}M$, this would imply
by upwards-absoluteness that $\delta$ fails to be Woodin in $M$.
Thus, $\kappa$ is Woodin in $N$. Moreover, as already mentioned
in the previous proof $\kappa$ is the largest cardinal of $N$. But
then as $N\mid\kappa=M\mid\kappa$ and so $N=J_{\alpha}(M\mid\kappa)$,
we will have that $N\lhd M$ and we are done.
\end{proof}

\subsection{The Preservation of KP under the Extender Algebra Forcing\label{subsec:The-Preservation-of KP}}

In this subsection we will prove that if $M$ is an admissible mouse
with a largest cardinal $\delta$ which is Woodin in $M$, $\mathbb{B}$
is the extender algebra as defined inside $M$, and $g\subset\mathbb{B}$
is $M$-generic, then $M[g]\models KP$. This will be Corollary \ref{cor:KP preservations extender algebra}
which is an instance of the more general Theorem \ref{thm:KP in delta c.c. forcings}. 

Note that if $M$ is an admissible premouse with a largest, regular,
and uncountable cardinal $\delta$ and $\mathbb{P}\in M$ is a forcing
poset, then we may assume without loss of generality that $\mathbb{P}\subset\delta$,
as there is a surjection $f:\delta\rightarrow\mathbb{P}$ in $M$.
We will do so throughout without further mentioning this.
\begin{lem}
\label{lem:BoundedAntichains}Let $M$ be an admissible premouse with
a largest, regular, and uncountable cardinal $\delta$ and let $\mathbb{P}\in M$
be a forcing poset such that $M\models"\mathbb{P}\text{ has the }\delta\text{-c.c.}"$.
Then there is no $A\in\Sigma_{1}^{M}(M)$ such that $A\subset\mathbb{P}$
is an antichain in $\mathbb{P}$ which is unbounded in $\delta$.
\end{lem}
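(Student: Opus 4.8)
The plan is to reach a contradiction by extracting from $A$ an actual \emph{element} $B$ of $M$ which is still an antichain in $\mathbb{P}$ unbounded in $\delta$; since $\delta$ is regular in $M$, such a $B$ has $M$-cardinality $\delta$, contradicting $M\models{}$``$\mathbb{P}$ has the $\delta$-c.c.''. As in the paragraph preceding the lemma, assume $\mathbb{P}\subseteq\delta$, so $A\subseteq\delta$, and fix a $\Sigma_{0}$-formula $\varphi_{0}$ (in the language of premice) and a parameter $p\in M$ with
\[
A=\{x<\delta:M\models\exists y\,\varphi_{0}(x,y,p)\}.
\]

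The key step is an application of $\Sigma_{0}$-collection in $M$. Let $\psi(\alpha,z)$ be the $\Sigma_{0}$-formula asserting that $z$ is an ordered pair $\langle x,y\rangle$ with $\alpha<x<\delta$ and $\varphi_{0}(x,y,p)$. Since $A\subseteq\delta$ is unbounded in $\delta$, for every $\alpha<\delta$ there is $x\in A$ with $\alpha<x<\delta$ together with a witness $y\in M$ for $\varphi_{0}(x,y,p)$; hence $M\models\forall\alpha<\delta\,\exists z\,\psi(\alpha,z)$. By $\Sigma_{0}$-collection (an axiom of $KP$) there is $b\in M$ with $M\models\forall\alpha<\delta\,\exists z\in b\,\psi(\alpha,z)$, and picking $\gamma^{*}$ with $b\in M\mid\gamma^{*}$ (so $b\subseteq M\mid\gamma^{*}$ by transitivity) we get
\[
M\models\forall\alpha<\delta\,\exists z\in M\mid\gamma^{*}\,\psi(\alpha,z).
\]
Now set $B:=\{x<\delta:\exists y\in M\mid\gamma^{*}\,\varphi_{0}(x,y,p)\}$, where $\exists y\in M\mid\gamma^{*}$ is read as a quantifier bounded by the parameter $M\mid\gamma^{*}\in M$. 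Then $B$ is defined over $M$ by a $\Sigma_{0}$-formula with parameters $\delta$, $M\mid\gamma^{*}$, $p$, so $B\in M$ by $\Sigma_{0}$-separation; clearly $B\subseteq A$; and $B$ is unbounded in $\delta$, since for each $\alpha<\delta$ the pair $z=\langle x,y\rangle\in M\mid\gamma^{*}$ witnessing $\psi(\alpha,z)$ has $y\in M\mid\gamma^{*}$ and puts $x\in B$ with $x>\alpha$.

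To finish: $B\subseteq A$ and $A$ is an antichain in $\mathbb{P}$, so $B$ is an antichain in $\mathbb{P}$, and $B,\mathbb{P}\in M$, so $M$ recognizes this. Since $\delta$ is regular in $M$ and $B\subseteq\delta$ is unbounded, $|B|^{M}=\delta$ (otherwise $\sup B<\delta$). Thus $M\models{}$``$B$ is an antichain in $\mathbb{P}$ of size $\delta$'', contradicting $M\models{}$``$\mathbb{P}$ has the $\delta$-c.c.''.

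The step I would regard as the real content — and the only place where something could go wrong — is the extraction of $B$ as an \emph{element} of $M$. One is tempted to argue that each $A\cap\alpha$ (for $\alpha<\delta$) lies in $M$ and then collect $\langle A\cap\alpha:\alpha<\delta\rangle$ into $M$; but $\Sigma_{1}$-separation can fail over an admissible $M$ (cf.\ the complete $\Sigma_{1}$ subset of $\omega$ inside $L_{\omega_{1}^{\ck}}$), so that route is not available. The argument above sidesteps it by using only $\Sigma_{0}$-collection, which \emph{is} available in $KP$, together with the trivial observation that every subset of the antichain $A$ is again an antichain — so the crude set-sized approximation $B$ read off from a single level $M\mid\gamma^{*}$ already suffices.
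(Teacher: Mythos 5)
Your proof is correct and follows essentially the same route as the paper's: both stratify the $\Sigma_{1}$-definable class $A$ into set-sized approximations (you by bounding the existential witness inside a single level $M\mid\gamma^{*}$ via $\Sigma_{0}$-collection, the paper by considering $A_{\alpha}=\{\xi:M\mid\alpha\models\varphi(\xi,p)\}$ and applying $\Sigma_{1}$-collection), and both conclude by exhibiting an unbounded antichain in $M$ that contradicts the $\delta$-c.c.\ together with the regularity of $\delta$.
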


\begin{proof}
Suppose there is $A\in\Sigma_{1}^{M}(M)$ is such that $A\subset\mathbb{P}$
is an antichain in $\mathbb{P}$ which is unbounded in $\delta$.
Since $\mathbb{P}$ has the $\delta$-c.c. in $M$, and $\delta$
is regular in $M$, this means that $A\notin M$.

Let $\varphi$ be a $\Sigma_{1}$-formula such that for some $p\in M$,
\[
x\in A\iff M\models\varphi(x,p).
\]
Define for $\alpha<\OR^{M}\setminus\delta$ such that $p\in M\mid\alpha$,
\[
A_{\alpha}:=\{\xi\in\delta:M\mid\alpha\models\varphi(\xi,p)\}.
\]
Note that $A=\bigcup_{\alpha<\OR^{M}}A_{\alpha}$ and $A_{\alpha}\in M$
for all $\alpha$ such that $\alpha\in\OR^{M}\setminus\delta$ and
$p\in M\mid\alpha$. In particular, since $A_{\alpha}\subset A$ is
an antichain, $\mathbb{P}$ has the $\delta$-c.c. in $M$, and $\delta$
is regular in $M$, $A_{\alpha}$ is bounded in $\delta$ for all
such $\alpha$.

Note that since $A=\bigcup_{\alpha<\OR^{M}}A_{\alpha}$ is by assumption
unbounded in $\delta$,
\[
M\models\forall\beta<\delta\exists\alpha(\exists\gamma(\gamma>\beta\land\gamma\in A_{\alpha}).
\]
Since the last part of this formula is $\Sigma_{1}$, we have by $\Sigma_{1}$-collection
that there is some $\alpha^{\prime}\in M$ such that $\alpha^{\prime}$
works for all $\beta<\delta$ uniformly, i.e. 
\[
M\models\exists\alpha^{\prime}\forall\beta<\delta(\exists\gamma(\gamma>\beta\land\gamma\in A_{\alpha^{\prime}}).
\]
Thus, $A_{\alpha^{\prime}}$ is unbounded in $\delta$ and $A_{\alpha^{\prime}}\in M$.
But, since $A_{\alpha^{\prime}}\subset A$, $A_{\alpha^{\prime}}$
is an antichain in $\mathbb{P}$ and thus by the $\delta$-c.c. in
$M$, $M\models\mid A_{\alpha^{\prime}}\mid<\delta$. However, this
contradicts the regularity of $\delta$! Thus, $A$ cannot exist.
\end{proof}
\begin{lem}
\label{lem:Sigma1Regularity}Suppose that $M$ is an admissible premouse
with a largest, regular, and uncountable cardinal $\delta$. Then
$\delta$ is a $\Sigma_{1}^{M}(M)$-regular cardinal, i.e. for all
$\eta<\delta$ and $f\in\Sigma_{1}^{M}(M)$ such that $f:\eta\rightarrow\delta$,
$\ran(f)$ is bounded in $\delta$, i.e. $\sup(\ran(f))<\delta$.
\end{lem}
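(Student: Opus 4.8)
The plan is to deduce the statement from two facts already in hand: the characterization of $KP$ in Lemma \ref{lem:KP in Premice}, and the internal regularity of $\delta$ in $M$. So fix $\eta<\delta$ and a function $f\in\Sigma_1^M(M)$ with $f\colon\eta\to\delta$. Since $\eta<\delta<\OR^M$ we have $\dom(f)=\eta\in M$, so $f$ meets the hypotheses of Lemma \ref{lem:KP in Premice}, and as $M\models KP$ that lemma yields $f\in M$. This first step is really the only one that uses anything beyond routine manipulations inside $M$.

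Next I would argue, working inside $M\models KP$, that $\ran(f)$ is a set. One quick way: $\ran(f)\subseteq\bigcup\bigcup f$, which lies in $M$ by two applications of Union, and the relation ``$\langle\xi,\zeta\rangle\in f$'' is $\Sigma_0$ in the parameters $\xi,\zeta,f$, so $\ran(f)=\{\zeta\in\bigcup\bigcup f:\exists\xi\in\bigcup\bigcup f\,(\langle\xi,\zeta\rangle\in f)\}$ is obtained by $\Sigma_0$-separation. Hence $\ran(f)\in M$, $\ran(f)\subseteq\delta$, and $M$ sees $f$ as a surjection of $\eta$ onto $\ran(f)$; in particular the definable map $\zeta\mapsto\min f^{-1}(\zeta)$ is an injection of $\ran(f)$ into $\eta$ lying in $M$, so $M\models\,\lvert\ran(f)\rvert\le\lvert\eta\rvert<\delta$.

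Finally I would invoke the hypothesis that $\delta$ is a regular cardinal of $M$: a subset of $\delta$ lying in $M$ whose $M$-cardinality is strictly below the regular cardinal $\delta$ must be bounded in $\delta$, so $M\models\exists\beta<\delta\;(\ran(f)\subseteq\beta)$. Boundedness of a subset of $\delta$ below a fixed $\beta<\delta\subseteq\OR^M$ is absolute between $M$ and $V$, so $\sup(\ran(f))\le\beta<\delta$ in $V$ as well, which is exactly the conclusion.

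The only place where any care is needed is the first step — checking that the $\Sigma_1^M(M)$-object $f$ genuinely is a function whose domain is an element of $M$, so that Lemma \ref{lem:KP in Premice} applies — but there is no real obstacle here: once $f\in M$, the statement is just the internal regularity of $\delta$ together with the standard fact that $KP$ proves that the range of a set function is a set.
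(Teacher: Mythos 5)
Your proof is correct. The paper leaves this lemma as an exercise and points at the proof of Lemma \ref{lem:BoundedAntichains} as the template: there one stratifies the $\Sigma_{1}$ definition into approximations computed over the initial segments $M\mid\alpha$, observes that each approximation is an element of $M$ and hence (by the internal regularity of $\delta$, respectively the $\delta$-c.c.) bounded, and then applies $\Sigma_{1}$-collection to collapse an alleged unbounded union into a single bounded approximation. You instead get $f\in M$ in one step from the forward direction of Lemma \ref{lem:KP in Premice} and finish by internal regularity. These are the same argument in substance --- the direction of Lemma \ref{lem:KP in Premice} you use is exactly $\Sigma_{1}$-replacement, whose proof is the stratification-plus-collection argument --- but your packaging is shorter and is legitimate given that Lemma \ref{lem:KP in Premice} is already stated (albeit without proof) earlier in the paper. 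Your attention to the hypotheses of that lemma ($f$ total with $\dom(f)=\eta\in M$) and to the absoluteness of boundedness below $\delta$ is exactly where the care is needed, and both points check out; the detour through $M$-cardinalities could be skipped, since once $f\in M$ the regularity of $\delta$ in $M$ directly forbids a cofinal map from $\eta<\delta$ into $\delta$.
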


The proof of the Lemma is very similar to the proof of Lemma \ref{lem:BoundedAntichains}.
Thus, we leave it as an exercise for the reader.
\begin{defn}
\label{def:set of strongly forcing coniditions}Let $M$ be an admissible
premouse, $\mathbb{P}\in M$ be a forcing poset, and $g\subset\mathbb{P}$
be $M$-generic.

For $\beta<\OR^{M}$ such that $\mathbb{P}\in M\mid\beta$ it is a
standard fact about admissible structures, that $\Vdash_{M\mid\beta}^{\mathbb{P}}\in M$,
where $\Vdash_{M\mid\beta}^{\mathbb{P}}$ is the syntactical forcing
relation with $M\mid\beta$ as a ground model. Moreover, the function
$F:\OR^{M}\rightarrow M$ such that $F(\beta)=\Vdash_{M\mid\beta}^{\mathbb{P}}$
is $\Delta_{1}^{M}(\{\mathbb{P}\})$.

Let $\varphi$ be a formula and $\dot{z}_{1},...,\dot{z}_{n}\in M^{\mathbb{P}}$.
When we write $p\Vdash_{M\mid\beta}^{\mathbb{P}}\psi(\dot{x},\dot{z}_{1},...,\dot{z}_{n})$
for some $p\in\mathbb{P}$, we always implicitly assume that $\{\mathbb{P},\dot{z}_{1},...,\dot{z}_{n}\}\subset M\mid\beta$.

Suppose that $g\subset\mathbb{P}$ is $M$-generic and that for a
$\Sigma_{0}$-formula $\psi$, $M[g]\models\exists x\psi(x,z_{1},...,z_{n})$,
where $\{z_{1},...,z_{n}\}\subset M[g]$. Let $\varphi\equiv\exists x\psi$
be the corresponding $\Sigma_{1}$-formula and let $\dot{z}_{1},...,\dot{z}_{n}$
be $\mathbb{P}$-names for $z_{1},...,z_{n}$. We let 
\[
W_{\varphi,\dot{z}_{1},...,\dot{z}_{n}}^{M}:=\{p\in\mathbb{P}:\exists\beta<\OR^{M}\exists\dot{x}\in M^{\mathbb{P}}(p\Vdash_{M\mid\beta}^{\mathbb{P}}\psi(\dot{x},\dot{z}_{1},...,\dot{z}_{n})\}.
\]
We call the set $W_{\varphi,\dot{z}_{1},...,\dot{z}_{n}}^{M}$ the
set of conditions strongly forcing $\varphi$ (with parameters $\dot{z}_{1},...,\dot{z}_{n}$).
\end{defn}

\begin{rem*}
Since $M\models"\text{Pairing}"$ it follows easily (using $\mathbb{P}$-names)
that $M[g]\models"\text{Pairing}"$. Therefore we may arrange that
$n$ has any specific value which we want it to have. In particular,
$1$.

We will write $W_{\varphi,\dot{z}_{1},...,\dot{z}_{n}}$ for $W_{\varphi,\dot{z}_{1},...,\dot{z}_{n}}^{M}$
if it is clear from the context what $M$ is.

Since $F\in\Delta_{1}^{M}(\{\mathbb{P}\})$, it follows easily that
$W_{\varphi,\dot{z}}\in\Sigma_{1}^{M}(\{\dot{z},\varphi,\mathbb{P}\})$.
Moreover, since $\mathbb{P}\in M$, there is $\beta<\OR^{M}$ such
that $\mathbb{P}\in M\mid\beta$ so that $W_{\varphi,\dot{z}}$ is
not trivially empty. However, in general $W_{\varphi,\dot{z}}\notin M$.

For $p\in W_{\varphi,\dot{z}}$, we write $p\Vdash_{M}^{w.\mathbb{P}}\varphi(\dot{z})$
as an abbreviation for $\exists\beta<\OR^{M}\exists\dot{x}\in M^{\mathbb{P}}(p\Vdash_{M\mid\beta}^{\mathbb{P}}\psi(\dot{x},\dot{z})$.
\end{rem*}
We have the following forcing theorem for $\Sigma_{1}$-statements
for models of $KP$
\begin{thm}
\label{thm:Sigma 1 Forcing Theorem}Suppose that $M$ is an admissible
premouse. Let $\varphi(y_{1},...,y_{n})\equiv\exists x\psi(x,y_{1},...,y_{n})$,
where $\psi$ is a $\Sigma_{0}$-formula. Let $\mathbb{P}\in M$ be
a forcing poset, $\{\dot{z}_{1},...,\dot{z}_{n}\}\subset M^{\mathbb{P}}$,
and suppose that $g\subset\mathbb{P}$ be $M$-generic. Then the following
are equivalent:
\begin{itemize}
\item $A[g]\models\varphi(\dot{z}_{1}^{g},...,\dot{z}_{n}^{g})$,
\item there is $p\in W_{\varphi,\dot{z}_{1},...,\dot{z}_{n}}\cap g$, and
\item there is $p\in g$ such that $p\Vdash_{M}^{\mathbb{P}}\varphi(\dot{z}_{1},...,\dot{z}_{n})$.
\end{itemize}
\end{thm}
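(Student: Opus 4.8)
The plan is to reduce the forcing theorem over $M$ — which need not model $\mathsf{ZFC}$ — to the ordinary forcing theorem over the set-sized levels $M\mid\beta$, $\beta<\OR^{M}$, and then to transfer information between the transitive sets $M\mid\beta[g]$ and $M[g]$ using $\Sigma_{0}$-absoluteness. First I would observe that the second and third clauses are literally the same assertion: reading the relation in the third clause as the weak forcing relation $\Vdash_{M}^{w.\mathbb{P}}$ introduced in the remark preceding the theorem, $p\Vdash_{M}^{w.\mathbb{P}}\varphi(\dot z_{1},\dots,\dot z_{n})$ unfolds by definition to $\exists\beta<\OR^{M}\,\exists\dot x\in M^{\mathbb{P}}\,\bigl(p\Vdash_{M\mid\beta}^{\mathbb{P}}\psi(\dot x,\dot z_{1},\dots,\dot z_{n})\bigr)$, i.e.\ $p\in W_{\varphi,\dot z_{1},\dots,\dot z_{n}}$. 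So it suffices to prove the equivalence of the first two clauses.

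Next I would record two facts. First, for $\beta<\OR^{M}$ with $\mathbb{P}\in M\mid\beta$ one has $M\mid\beta\in M$ by acceptability, so every dense subset of $\mathbb{P}$ lying in $M\mid\beta$ already lies in $M$; hence $M$-genericity of $g$ implies $M\mid\beta$-genericity of $g$. Second, $\Vdash_{M\mid\beta}^{\mathbb{P}}\in M$ by Definition~\ref{def:set of strongly forcing coniditions}. Together these give the full forcing theorem for the transitive set model $M\mid\beta$ with the generic $g$: for every formula $\theta$ and names $\dot w_{1},\dots,\dot w_{m}\in(M\mid\beta)^{\mathbb{P}}$,
\[
M\mid\beta[g]\models\theta(\dot w_{1}^{g},\dots,\dot w_{m}^{g})\iff\exists p\in g\,\bigl(p\Vdash_{M\mid\beta}^{\mathbb{P}}\theta(\dot w_{1},\dots,\dot w_{m})\bigr),
\]
proved by the usual induction on $\theta$; the only use of genericity is in the atomic and bounded-quantifier steps, where the relevant dense sets are definable from $\Vdash_{M\mid\beta}^{\mathbb{P}}$ and parameters in $M\mid\beta$, hence are elements of $M$ and are met by $g$. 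I would also note $M[g]=\bigcup_{\beta<\OR^{M}}M\mid\beta[g]$, an increasing union of transitive sets with each $M\mid\beta[g]\subseteq M[g]$, and that every $\mathbb{P}$-name in $M$ lies in some $M\mid\beta$.

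Now the two directions. For $(\Leftarrow)$: take $p\in W_{\varphi,\dot z}\cap g$ and fix $\beta,\dot x$ with $p\Vdash_{M\mid\beta}^{\mathbb{P}}\psi(\dot x,\dot z)$; by the displayed forcing theorem for $M\mid\beta$ and $p\in g$ we get $M\mid\beta[g]\models\psi(\dot x^{g},\dot z^{g})$, and since $\psi$ is $\Sigma_{0}$, $M\mid\beta[g]\subseteq M[g]$ are transitive, and the witnesses $\dot x^{g},\dot z_{i}^{g}$ lie in $M\mid\beta[g]$, $\Sigma_{0}$-absoluteness upward gives $M[g]\models\psi(\dot x^{g},\dot z^{g})$, i.e.\ $M[g]\models\varphi(\dot z^{g})$. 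For $(\Rightarrow)$: assume $M[g]\models\varphi(\dot z^{g})$, witnessed by some $a=\dot a^{g}$ with $\dot a\in M^{\mathbb{P}}$ and $M[g]\models\psi(a,\dot z^{g})$; choose $\gamma<\OR^{M}$ with $\mathbb{P},\dot a,\dot z_{1},\dots,\dot z_{n}\in M\mid\gamma$, so that $a,\dot z_{i}^{g}\in M\mid\gamma[g]$ and $\Sigma_{0}$-absoluteness downward gives $M\mid\gamma[g]\models\psi(a,\dot z^{g})$; the forcing theorem for $M\mid\gamma$ then furnishes $p\in g$ with $p\Vdash_{M\mid\gamma}^{\mathbb{P}}\psi(\dot a,\dot z)$, and this $p$, with $\beta=\gamma$ and $\dot x=\dot a$, witnesses $p\in W_{\varphi,\dot z}\cap g$. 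Note that it is essential here that we apply the level-by-level forcing theorem to $\psi$ (with the witness name $\dot a$ substituted), not to $\varphi$ itself — this is exactly what the set $W_{\varphi,\dot z}$ is designed to exploit.

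I expect the only genuine work to be in justifying the displayed forcing theorem over $M\mid\beta$: because $M\mid\beta$ is very far from modelling $\mathsf{ZFC}$ one cannot invoke it as a black box, and the point is precisely that the definability-of-forcing and truth lemmas still go through since $\Vdash_{M\mid\beta}^{\mathbb{P}}$ is computed inside $M$ and all dense sets arising in the induction are elements of $M$, which $g$ meets by $M$-genericity. Writing that induction out once, for arbitrary $\theta$, is the heart of the matter; everything else — the two $\Sigma_{0}$-absoluteness transfers, the decomposition $M[g]=\bigcup_{\beta}M\mid\beta[g]$, and the fact that names are cofinal in the levels of $M$ — is routine bookkeeping.
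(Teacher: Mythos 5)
Your treatment of the equivalence between the first two clauses is sound and is essentially what the paper intends (the paper dismisses that part as ``standard''): reduce to the honest forcing theorem over the set-sized levels $M\mid\beta$, whose syntactic forcing relations lie in $M$ and whose dense sets are therefore met by any $M$-generic $g$, and then transfer $\psi$ between $M\mid\beta[g]$ and $M[g]$ by $\Sigma_{0}$-absoluteness. The problem is your opening move, where you declare the second and third clauses to be ``literally the same assertion'' by reading $\Vdash_{M}^{\mathbb{P}}$ as the weak relation $\Vdash_{M}^{w.\mathbb{P}}$. That is not the intended reading: in the third clause $p\Vdash_{M}^{\mathbb{P}}\varphi(\dot z_{1},\dots,\dot z_{n})$ is the \emph{classical} forcing of an existential statement, i.e.\ for every $q\leq p$ there are $r\leq q$ and $\dot x\in M^{\mathbb{P}}$ with $r\Vdash_{M}^{\mathbb{P}}\psi(\dot x,\dot z_{1},\dots,\dot z_{n})$ (this is spelled out in the remark following the theorem, which also notes that this equivalence is exactly where admissibility is used). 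With that reading, the implication from clause 3 to clause 2 is the only genuinely non-trivial part of the theorem, and your proposal skips it entirely. The difficulty is that if $p\in g$ classically forces $\varphi$, then $W_{\varphi,\dot z_{1},\dots,\dot z_{n}}$ is dense below $p$ but is only $\Sigma_{1}$-definable over $M$ and in general is not an element of $M$; plain $M$-genericity therefore does not let you conclude $g\cap W_{\varphi,\dot z_{1},\dots,\dot z_{n}}\neq\emptyset$, and for non-admissible $M$ this implication can fail.

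The missing step is an application of $\Sigma_{1}$-bounding. From ``$W_{\varphi,\dot z_{1},\dots,\dot z_{n}}$ is dense below $p$'' one has
\[
M\models\forall q\leq p\;\exists r\;\exists\beta\;\exists\dot x\,\bigl(r\leq q\land r\Vdash_{M\mid\beta}^{\mathbb{P}}\psi(\dot x,\dot z_{1},\dots,\dot z_{n})\bigr),
\]
whose matrix is $\Sigma_{1}$ (using that the map $\beta\mapsto\Vdash_{M\mid\beta}^{\mathbb{P}}$ is $\Delta_{1}^{M}$). Since $\mathbb{P}\in M$, $\Sigma_{1}$-bounding yields a single $\beta_{0}<\OR^{M}$ such that for every $q\leq p$ there are $r\leq q$ and $\dot x\in(M\mid\beta_{0})^{\mathbb{P}}$ with $r\Vdash_{M\mid\beta_{0}}^{\mathbb{P}}\psi(\dot x,\dot z_{1},\dots,\dot z_{n})$. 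The set of such $r$, together with all conditions incompatible with $p$, is now definable over $M\mid\beta_{0}$, hence an element of $M$, and is dense in $\mathbb{P}$; since $p\in g$, genericity produces $r\in g\cap W_{\varphi,\dot z_{1},\dots,\dot z_{n}}$, which is clause 2 (and then clause 1 by your argument). You should also record the easy converse $2\Rightarrow 3$, which needs the persistence and level-absoluteness of the syntactic forcing relation for the $\Sigma_{0}$ formula $\psi$. With these additions your proof is complete; without them the theorem's third clause is not addressed at all.
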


\begin{rem*}
Here $p\Vdash_{M}^{\mathbb{P}}\varphi(\dot{z}_{1},...,\dot{z}_{n})$
refers to the classical notion of forcing an existential statement,
i.e. for every $q\leq p$ there is some $r\leq q$ such that there
is $\dot{x}\in M^{\mathbb{P}}$ such that $r\Vdash_{M}^{\mathbb{P}}\psi(\dot{x},\dot{z}_{1},...,\dot{z}_{n})$.
In order to show that this is equivalent to the existence of $p\in W_{\varphi,\dot{z}_{1},...,\dot{z}_{n}}\cap g$,
we must use the admissibility of $M$. The rest of the proof is standard.
\end{rem*}
\begin{lem}
\label{lem:maximal antichain lemma}Let $M$ be a $0$-countably iterable
admissible premouse with a largest, regular, and uncountable cardinal
$\delta$. Let $\mathbb{P}\in M$ be a forcing poset such that $M\models"\mathbb{P}\text{ has the }\delta\text{-c.c.}"$.
Let $g\subset\mathbb{P}$ be $M$-generic. Suppose that for $\lambda<\delta$,
$M[g]\models\forall\alpha<\lambda\exists x\psi(x,\alpha,z)$, where
$\psi$ is a $\Sigma_{0}$-formula and $z\in M[g]$. Let $\varphi\equiv\exists x\psi$
and $\dot{z}\in M^{\mathbb{P}}$ be a name for $z$.

Then there is $A\in M$ such that $A\subset\delta\times\delta$ and
if for $\alpha<\lambda$, $A_{\alpha}:=\{\beta:(\alpha,\beta)\in A\}$,
then $A_{\alpha}\subset W_{\varphi,\check{\alpha},\dot{z}}$ is a
maximal antichain in $W_{\varphi,\check{\alpha},\dot{z}}$, i.e. for
every $p\in W_{\varphi,\check{\alpha},\dot{z}}$ there is some $q\in A_{\alpha}$
such that $q\mid\mid p$.
\end{lem}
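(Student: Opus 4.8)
The plan is to construct the desired set $A$ by a single application of $\Sigma_1$-collection inside $M$, after first cutting each $W_{\varphi,\check\alpha,\dot z}$ down to a bounded set via the $\delta$-c.c. The key point is that although $W:=W_{\varphi,\check\alpha,\dot z}$ is only $\Sigma_1^M(M)$ and need not be an element of $M$, its ``$\beta$-approximations''
\[
W^{\beta}:=\{p\in\mathbb{P}:\exists\dot x\in M^{\mathbb{P}}\cap(M\mid\beta)\ (p\Vdash^{\mathbb{P}}_{M\mid\beta}\psi(\dot x,\check\alpha,\dot z))\}
\]
(for $\beta<\OR^M$ with $\{\mathbb P,\dot z\}\subset M\mid\beta$) all lie in $M$, are $\subseteq$-increasing in $\beta$, and have union $W$. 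This uses that $F(\beta)=\Vdash^{\mathbb P}_{M\mid\beta}$ is $\Delta_1^M(\{\mathbb P\})$, exactly as recorded in Definition~\ref{def:set of strongly forcing coniditions}.

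First I would fix $\alpha<\lambda$. By Theorem~\ref{thm:Sigma 1 Forcing Theorem} the hypothesis $M[g]\models\exists x\psi(x,\alpha,z)$ means some condition of $g$ lies in $W$, so in particular $W\ne\emptyset$; more to the point, for \emph{every} $q\in\mathbb P$ there is $p\le q$ with $p\in W$, since $q$ does not decide $\varphi(\check\alpha,\dot z)$ negatively (any generic through $q$ still satisfies $\varphi$). Hence $W$ is predense in $\mathbb P$. Now inside $M$, for each $\beta$ one can choose (using a fixed wellorder of $M$, available since $M\models V=L[E]$) a maximal antichain $a_\beta\subseteq W^\beta$; the map $\beta\mapsto a_\beta$ is $\Sigma_1^M(\{\mathbb P,\dot z,\varphi,\alpha\})$. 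Since $a_\beta\subseteq W^\beta\subseteq\mathbb P$ is an antichain and $M\models$``$\mathbb P$ has the $\delta$-c.c.'' with $\delta$ regular in $M$, each $a_\beta$ is a bounded subset of $\delta$, hence $a_\beta\in M$. As $W=\bigcup_\beta W^\beta$ is predense, for $\beta$ large enough $a_\beta$ is in fact a maximal antichain in all of $W$: given $p\in W$, pick $\beta$ with $p\in W^\beta$ and $\beta$ past the stage where $a_\beta$ stabilizes. To make this precise I would argue that the sequence $\langle a_\beta:\beta<\OR^M\rangle$ is eventually constant below $\delta$: its union $\bigcup_\beta a_\beta$ is a $\Sigma_1^M(M)$ antichain in $\mathbb P$, so by Lemma~\ref{lem:BoundedAntichains} it is bounded in $\delta$, say by $\bar\beta_\alpha$; then some fixed $a_{\beta^*_\alpha}$ already enumerates it and is a maximal antichain of $W$.

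Next I would collect uniformly in $\alpha$. The relation ``$\beta^*$ is the least stage $\beta$ such that $a_\beta$ is a maximal antichain in $W_{\varphi,\check\alpha,\dot z}$'' — equivalently, such that $\bigcup_{\gamma}a_\gamma\subseteq\bigcup a_{\beta}$ and $a_\beta$ is predense in $W_{\varphi,\check\alpha,\dot z}$ — can be written as a $\Sigma_1^M(M)$ relation in $\alpha$ and $\beta^*$ (the unbounded-antichain witness lives in $M$ by the previous paragraph, and ``$a_\beta$ is a maximal antichain in $W$'' is then $\Pi_1$-over-a-set, hence arithmetical in $M$-parameters). So $M\models\forall\alpha<\lambda\ \exists\beta^*(\dots)$, and by $\Sigma_1$-collection in $M$ there is a single $\theta<\OR^M$ with $\beta^*_\alpha<\theta$ for all $\alpha<\lambda$. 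Then set, inside $M$,
\[
A:=\{(\alpha,\beta):\alpha<\lambda\text{ and }\beta\in a^{\alpha}_{\beta^*_\alpha}\}\subseteq\delta\times\delta,
\]
computed using $M\mid\theta$ as an oracle for all the forcing relations involved; since $\lambda<\delta$ and each slice $A_\alpha=a^\alpha_{\beta^*_\alpha}$ is a bounded subset of $\delta$, $\Sigma_1$-collection (applied once more, or $\Sigma_1$-replacement) puts $A\in M$. By construction $A_\alpha=a^\alpha_{\beta^*_\alpha}\subseteq W_{\varphi,\check\alpha,\dot z}$ is a maximal antichain in $W_{\varphi,\check\alpha,\dot z}$, as required.

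The main obstacle is the passage from the bounded approximations $W^\beta$ to genuine maximality in the full $\Sigma_1$-class $W$: one must rule out that the antichains $a_\beta$ keep growing cofinally in $\delta$ as $\beta\to\OR^M$, which is precisely where Lemma~\ref{lem:BoundedAntichains} (no unbounded $\Sigma_1^M(M)$ antichain) is essential, and one must then check that the stabilized antichain is not merely maximal among the $W^\beta$'s but is predense in $W$ itself — this again uses predensity of $W$ in $\mathbb P$ together with the $\delta$-c.c. A secondary technical point is keeping every choice canonical (via the $L[E]$-wellorder of $M$) so that the stage function $\alpha\mapsto\beta^*_\alpha$ is genuinely $\Sigma_1^M(M)$ and $\Sigma_1$-collection applies.
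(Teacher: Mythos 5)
Your overall strategy---approximating $W_{\varphi,\check\alpha,\dot z}$ by the sets $W^{\beta}\in M$, building antichains greedily, and using Lemma \ref{lem:BoundedAntichains} together with $\Sigma_{1}$-bounding to show the construction stabilizes---is exactly the engine of the paper's proof. But your organization (a separate recursion for each $\alpha<\lambda$, followed by one application of $\Sigma_{1}$-collection over the stabilization stages $\beta^{*}_{\alpha}$) has a genuine gap at the collection step. The relation ``$\beta^{*}$ is the least stage at which $a_{\beta}$ has become a maximal antichain in $W_{\varphi,\check\alpha,\dot z}$'' asserts, among other things, that \emph{no} later stage contributes a new incompatible element of the $\Sigma_{1}$ class $W_{\varphi,\check\alpha,\dot z}$; this is $\Pi_{1}$ over $M$, so the statement $\forall\alpha<\lambda\,\exists\beta^{*}(\dots)$ is not an instance of $\Sigma_{1}$-collection. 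Your parenthetical appeal to the relation being ``arithmetical in $M$-parameters'' does not help: $KP$ gives only $\Sigma_{1}$-collection, and knowing for each individual $\alpha$ that $\bar A_{\alpha}:=\bigcup_{\beta}a_{\beta}$ is an element of $M$ does not make the graph of $\alpha\mapsto\bar A_{\alpha}$ a $\Sigma_{1}$ relation (indeed, that this sequence lies in $M$ is essentially the conclusion of the lemma, so one cannot use it as a parameter to prove it). The paper sidesteps this entirely by running a \emph{single} simultaneous $\Sigma_{1}$-recursion on pairs $(\alpha,p)$, interleaving all $\alpha<\lambda$, so that only one halting argument is needed and no collection over per-$\alpha$ halting stages ever occurs; the halting argument then uses the regularity of $\delta$ and $\lambda<\delta$ to extract a single unbounded slice contradicting Lemma \ref{lem:BoundedAntichains}. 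To repair your proof you would have to reorganize it in essentially this way.

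Two smaller points. First, for $\bigcup_{\beta}a_{\beta}$ to be an antichain (so that Lemma \ref{lem:BoundedAntichains} applies) the $a_{\beta}$ must be chosen coherently, i.e.\ $a_{\beta}\subseteq a_{\beta'}$ for $\beta<\beta'$; picking ``the $<_{M}$-least maximal antichain of $W^{\beta}$'' independently at each stage does not give this, and once you make the choices nested and greedy you have reproduced the paper's recursion for that fixed $\alpha$. Second, your claim that $W_{\varphi,\check\alpha,\dot z}$ is predense in $\mathbb{P}$ is false in general: the hypothesis $M[g]\models\varphi(\alpha,z)$ concerns only the one generic $g$, and a condition $q$ incompatible with every element of $g$ may well force $\neg\varphi(\check\alpha,\dot z)$. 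Fortunately this is not needed---maximality of $A_{\alpha}$ is only asserted \emph{within} $W_{\varphi,\check\alpha,\dot z}$, and the fact that $g$ meets $W_{\varphi,\check\alpha,\dot z}$ (Theorem \ref{thm:Sigma 1 Forcing Theorem}) is all that is used later---but the claim should be removed.
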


\begin{proof}
Fix some $\mathbb{P}$-name $\dot{z}$ for $z$. We construct the
set $A$ recursively along the ordinals of $M$. More specifically,
we will define via a $\Sigma_{1}$-recursion a sequence $\langle(A^{i},\beta_{i}):i<\OR^{M}\rangle$
and set $A=\bigcup_{i<\OR^{M}}A^{i}$.Let $\beta_{0}$ be the least
$\beta$ such that 
\[
M\mid\beta\models\exists\alpha<\lambda\exists p\in\mathbb{P}(\exists\dot{x}\in M^{\mathbb{P}}(p\Vdash_{M\mid\beta}^{\mathbb{P}}\psi(\dot{x},\check{\alpha},\dot{z}))),
\]
i.e. $\beta$ is such that there is some $\alpha<\lambda$ such that
$W_{\varphi,\check{\alpha},\dot{z}}\cap(M\mid\beta)\neq\emptyset$.
Let $A^{0}$ be such that $(\alpha,p)\in A^{0}$ iff $\alpha<\lambda$,
$W_{\varphi,\check{\alpha},\dot{z}}\cap(M\mid\beta)\neq\emptyset$,
and $p$ is the $<_{M}$-least element in $W_{\varphi,\check{\alpha},\dot{z}}\cap(M\mid\beta)$.
Here, $<_{M}$ denotes the canonical $\Sigma_{1}$-definable well-order
of $M$. Note that $A^{0}$ is a bounded subset of $\delta$. (Actually,
of $\delta^{2}$, but via coding we may assume that $A^{0}\subset\delta$).
Thus, $A^{0}\in M$. Before we continue the construction let us introduce
the following notation: For $\alpha<\lambda$ and $i<\OR^{M}$ let
$A_{\alpha}^{i}:=\{q\in\mathbb{P}:(\alpha,q)\in A^{i}\}$, where $A^{i}$
is to be defined. 

Let $\gamma+1<\OR^{M}$ and suppose that $\langle(A^{i},\beta_{i}):i\leq\gamma\rangle$
is defined such that $\langle(A^{i},\beta_{i}):i\leq\gamma\rangle\in\Sigma_{1}^{M}(\{\lambda,\psi,\dot{z},\mathbb{P}\})$.
We define $\beta_{\gamma+1}$ as the least $\beta$ such that 
\[
M\mid\beta\models\exists\alpha<\lambda\exists p\in\mathbb{P}((\forall q\in A_{\alpha}^{\gamma}\rightarrow p\perp q)\land p\in W_{\varphi,\check{\alpha},\dot{z}}),
\]
if there is such $\beta$. In other words $\beta$ is such that for
some $\alpha$ there is an element $p\in W_{\varphi,\check{\alpha},\dot{z}}\cap(M\mid\beta)$
incompatible with the elements of $A_{\alpha}^{\gamma}$. In case
$\beta_{\gamma+1}$ is undefined we stop the recursion. Note that
in this case $A^{\gamma}\in M$.

In case $\beta_{\gamma+1}$ is defined, we set $A^{\gamma+1}$ to
be the union of $A^{\gamma}$ with the set of all $(\alpha,p)$ such
that $p$ is the $<_{M}$-least $q$ such that 
\[
M\mid\beta_{\gamma+1}\models q\in W_{\varphi,\check{\alpha},\dot{z}}\land\forall r\in A_{\alpha}^{\gamma}\rightarrow r\perp q,
\]
if there is such $q$. Since $A^{\gamma+1}$ is definable over $M\mid\beta_{\gamma+1}$,
$A^{\gamma+1}\in M$. 

Now suppose that $\gamma<\OR^{M}$ is a limit ordinal and $\langle A^{i}:i<\gamma\rangle$
is defined. We set $\beta_{\gamma}=0$ and $A^{\gamma}=\bigcup_{i<\gamma}A^{i}$.
Note that since $\langle A_{i}:i<\gamma\rangle\in\Sigma_{1}^{M}(M)$
by $KP$, $A^{\gamma}\in M$.

We aim to see that the recursive definition of $\langle(A^{i},\beta_{i}):i<\OR^{M}\rangle$
stops before the ordinal height of $\OR^{M}$, i.e. there is some
$\gamma<\OR^{M}$ such that $\beta_{\gamma}$ is not defined. Note
that since $\langle(A^{i},\beta_{i}):i<\OR^{M}\rangle\in\Sigma_{1}^{M}(M)$,
if we assume that the recursion does not stop, $\bar{A}:=\bigcup_{i<\OR^{M}}A^{i}\in\Sigma_{1}^{M}(M)$
is an unbounded subset of $\delta$: Suppose for the sake of contradiction
that $\bar{A}$ is bounded in $\delta$. Then, since $\rho_{1}^{M}=\delta$,
$\bar{A}\in M$. But the definition of $\bar{A}$ gives a cofinal
and total $f:\bar{A}\rightarrow\OR^{M}$ such that $f\in\Sigma_{1}^{M}(M)$.
But by $\Sigma_{1}$-bounding, this means that $\OR^{M}\in M$. Contradiction!

In particular, the following holds in $M$,
\[
\forall\beta<\delta\exists(\alpha,\gamma)(\alpha<\lambda\land\gamma\in(\beta,\delta)\land\exists\beta_{i}(\gamma\text{ is added to }A_{\alpha}\text{ at stage }\beta_{i})).
\]
But by $\Sigma_{1}$-bounding, there is some $\beta_{i}<\OR^{M}$
such that for all $\beta<\delta$, there is some $\gamma\in(\beta,\delta)$
added to some $A_{\alpha}^{i}$ for $\alpha<\lambda$. But this is
inside $M\mid\beta_{i}+\omega$. Thus, as $\delta$ is a regular cardinal
of $M$, there is some $\alpha$ to which unboundedly many $\gamma<\delta$
are added. Contradiction by Lemma \ref{lem:BoundedAntichains}!

Let $\gamma<\OR^{M}$ be least such that $\beta_{\gamma}$ is undefined.
Note that $\gamma$ is by definition a successor ordinal, i.e. $\gamma=\eta+1$
for some $\eta<\OR^{M}$. Set $A:=A^{\eta}$. By construction $A\in M$.
For $\alpha<\lambda$, set $A_{\alpha}:=\{p\in\mathbb{P}:(\alpha,p)\in A\}$.
Since the recursion stops before $\OR^{M}$ we have that for all $\alpha<\lambda$,
$A_{\alpha}\subset W_{\varphi,\check{\alpha},\dot{z}}$ is a maximal
antichain in $W_{\varphi,\check{\alpha},\dot{z}}$. 
\end{proof}
\begin{thm}
\label{thm:KP in delta c.c. forcings}Let $M$ be a $0$-countably
iterable admissible premouse with a largest, regular, and uncountable
cardinal $\delta$ and $\mathbb{P}\in M$ is such that $M\models"\mathbb{P}\text{ has the }\delta\text{-c.c.}"$.
Then $M[g]\models KP$ for any $M$-generic $g\subset\mathbb{P}$.
\end{thm}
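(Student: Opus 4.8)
The plan is to check the axioms of $KP$ in $M[g]$ one at a time, the only non-routine case being $\Sigma_0$-collection. First one records the easy preliminaries: since $\mathbb{P}$ has the $\delta$-c.c. in $M$ and $\delta$ is regular in $M$, $\delta$ is still a regular cardinal, indeed the largest cardinal, of the passive premouse $M[g]$; and Extensionality, Foundation, Pairing, Union, and $\Delta_0$-separation hold in $M[g]$ by the usual forcing arguments, where for $\Delta_0$-separation one uses that $\Vdash^{\mathbb{P}}_{M\mid\beta}$ (for a fixed formula, with names in $M\mid\beta$) lies in $M$ by admissibility. By Lemma \ref{lem:KP in Premice}, applied to $M[g]$ with its largest cardinal $\delta$, it then suffices to prove: if $\theta$ is $\Sigma_0$, $z\in M[g]$, and $M[g]\models\forall\xi<\delta\,\exists w\,\theta(w,\xi,z)$, then there is $b\in M[g]$ with $M[g]\models\forall\xi<\delta\,\exists w\in b\,\theta(w,\xi,z)$; composing with a surjection of $\delta$ (or of some $\lambda<\delta$) onto an arbitrary bounding set — which exists in $M[g]$ because $\delta$ is largest there — this yields full $\Sigma_0$-collection.

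So fix such $\theta,z$, a $\mathbb{P}$-name $\dot z\in M$ for $z$, and put $\varphi(\xi)\equiv\exists w\,\theta(w,\xi,\dot z)$. By Theorem \ref{thm:Sigma 1 Forcing Theorem}, $g$ meets the (downward closed, $\Sigma_1^M(M)$) set $W_{\varphi,\check\xi,\dot z}$ for every $\xi<\delta$. For each $\lambda<\delta$, Lemma \ref{lem:maximal antichain lemma} gives a set $A^\lambda\in M$, $A^\lambda\subseteq\delta\times\delta$, such that $A^\lambda_\xi$ is a maximal antichain in $W_{\varphi,\check\xi,\dot z}$ for each $\xi<\lambda$; the key point is that then $g$ actually meets $A^\lambda_\xi$, since the downward closure of $A^\lambda_\xi$ (an element of $M$) is dense below any condition of $g\cap W_{\varphi,\check\xi,\dot z}$ and $g$ is a filter, hence upward closed. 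Each member of $A^\lambda_\xi$ strongly forces $\varphi(\check\xi)$, so it carries a $\mathbb{P}$-name in $M$ for a witness; as $A^\lambda\in M$, $\Sigma_1$-collection in $M$ produces an ordinal $\beta_\lambda<\OR^M$ below which all of these witness-names live, and reading off a name in $M$ for $M[g]\mid\beta_\lambda$ we obtain $b_\lambda:=M[g]\mid\beta_\lambda\in M[g]$ with $M[g]\models\forall\xi<\lambda\,\exists w\in b_\lambda\,\theta(w,\xi,z)$.

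It remains to pass from all $\lambda<\delta$ to $\delta$ itself, and this is the crux, because Lemma \ref{lem:maximal antichain lemma} is only available for $\lambda<\delta$. The plan is to show $\beta^*:=\sup_{\lambda<\delta}\beta_\lambda<\OR^M$ — equivalently, that $\mathbb{P}$ adds no cofinal map $\delta\to\OR^M$ over $M$ — by a $\delta$-c.c. argument in the spirit of Lemma \ref{lem:BoundedAntichains}. Let $c(\lambda)$ be the least $\beta$ with $M[g]\mid\beta$ collecting witnesses for all $\xi<\lambda$ (this is a canonical choice of $\beta_\lambda$, and $c$ has a $\Sigma_0$-definable graph over $M[g]$). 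If $c$ were cofinal in $\OR^M$, then for each $\lambda<\delta$ the set $V_\lambda$ of ordinals $\beta$ forced by some condition of $\mathbb{P}$ to equal ``$c(\check\lambda)$'' has $M$-cardinality $<\delta$ by the $\delta$-c.c. (distinct values force pairwise incompatible conditions). Since the forcing relation for the fixed $\Sigma_0$-formula defining $c$ is $\Delta_1^M(M)$ and $M\models KP$, the sequence $\langle V_\lambda:\lambda<\delta\rangle$ — a $\Sigma_1^M(M)$-definable function with domain $\delta$ — lies in $M$ by Lemma \ref{lem:KP in Premice}, so $\bigcup_{\lambda<\delta}V_\lambda\in M$ is a set of ordinals and hence bounded in $\OR^M$; as $c(\lambda)\in V_\lambda$ for every $\lambda$, this bounds $c$, a contradiction. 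With $\beta^*<\OR^M$ secured, $b:=M[g]\mid\beta^*\in M[g]$ collects witnesses for every $\xi<\delta$ by applying the previous paragraph with $\lambda=\xi+1$. I expect the real work to lie in this last argument: arranging the definition of $V_\lambda$ so that it is genuinely $\Delta_1$ (or at least $\Sigma_1$) over $M$ despite $W_{\varphi,\check\xi,\dot z}$ being only $\Sigma_1$, and verifying that each $V_\lambda$, and the whole sequence, is an element of $M$ rather than merely a definable class, which is where $\Delta_1$-separation and the $\delta$-c.c. in $M$ enter essentially.
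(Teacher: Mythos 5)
Your reduction to $\Sigma_0$-collection along $\delta$, and your treatment of a fixed $\lambda<\delta$ via Lemma \ref{lem:maximal antichain lemma} (meeting each $A^\lambda_\xi$ with $g$ and then applying $\Sigma_1$-collection in $M$ to get $b_\lambda$), is essentially the paper's first case and is fine. The gap is exactly where you say the real work lies, and your proposed fix does not close it. The claim that $\langle V_\lambda:\lambda<\delta\rangle$ is a $\Sigma_1^M(M)$-definable function with domain $\delta$ (so that $\Sigma_1$-replacement puts it in $M$) is unjustified: ``$\beta\in V_\lambda$'' is indeed $\Sigma_1$ (an unbounded existential over levels $M\mid\gamma$ at which some condition is seen to force $\check\beta=c(\check\lambda)$), but ``$v=V_\lambda$'' requires in addition that $v$ contain \emph{every} such $\beta$, which is $\Pi_1$; equivalently, certifying $v=V_\lambda$ amounts to exhibiting a \emph{maximal} antichain among the deciding conditions, and maximality of an antichain inside a $\Sigma_1$ class is a $\Pi_1$ statement. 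One can get each individual $V_\lambda$ into $M$ (a $\Sigma_1$ antichain of deciding conditions is bounded in $\delta$ by Lemma \ref{lem:BoundedAntichains}, hence in $M$ since $\rho_1^M\geq\delta$, hence of size $<\delta$ by the $\delta$-c.c.), but the stage by which the search for $V_\lambda$ stabilizes is itself only $\Sigma_1\wedge\Pi_1$ in $\lambda$, so $\Sigma_1$-bounding in $M$ does not bound the supremum of these stages over $\lambda<\delta$. This is precisely why Lemma \ref{lem:maximal antichain lemma} is stated only for $\lambda<\delta$: its termination argument concludes from ``$\bigcup_\alpha A_\alpha$ is unbounded in $\delta$'' that some single section $A_\alpha$ is unbounded (contradicting Lemma \ref{lem:BoundedAntichains}), using the regularity of $\delta$ and the fact that there are fewer than $\delta$ sections. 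With $\delta$ many sections, each of size $<\delta$, the union can be unbounded without any section being unbounded, and the simultaneous recursion need not terminate.

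The paper's handling of this case is of a different character and never computes the value sets. It takes $X$ to be the $\Sigma_1^M$ class of conditions strongly forcing the full statement $\forall\alpha<\check\delta\,\exists y\,\varphi_F(\alpha,y,\dot z)$, builds a maximal antichain $A\in M$ in $X$ (here the recursion terminates because $X$ is a \emph{single} $\Sigma_1$ class), observes that $g\cap A=\emptyset$ on pain of being done, picks $\tilde p\in g$ incompatible with everything in $A$, chooses $\langle p_i:i<\delta\rangle\in M$ below $\tilde p$ with $p_i$ forcing the $i$-th instance, and then runs a block-decomposition and descending-chain argument with the $\delta$-c.c.\ to produce a condition $q\leq\tilde p$ forcing every instance, so that $q\in X$, contradicting $q\perp A$. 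If you want to keep your outline, the $V_\lambda$ step has to be replaced by an argument of this kind; as written, the passage from all $\lambda<\delta$ to $\delta$ itself is not proved.
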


\begin{proof}
We will verify that $M[g]$ satisfies $\Sigma_{0}$-bounding and leave
the remaining axioms of $KP$ as an exercise. 

Let $g\subset\mathbb{P}$ be $M$-generic and suppose for the sake
of contradiction that $M[g]\not\models"\Sigma_{0}\text{-bounding}"$.
Let $\psi$ be a $\Sigma_{1}$-formula $\psi$ and $y\in M[g]$ such
that they constitute a counterexample to $\Sigma_{0}$-bounding, i.e.
\[
M[g]\models\forall\alpha<\delta\exists x(\psi(\alpha,x,y)),
\]
but there exists no $z\in M[g]$ such that 
\[
M[g]\models\forall\alpha<\delta\exists x\in z(\psi(\alpha,x,y)).
\]

We distinguish two cases. First, suppose that there is some $\lambda<\delta$
such that there exists no $z\in M[g]$ such that 
\[
M[g]\models\forall\alpha<\lambda\exists x\in z(\psi(\alpha,x,y)).
\]
Then, by the previous Lemma \ref{lem:maximal antichain lemma} there
is $A\in M$ such that for $\alpha<\lambda$, $A_{\alpha}:=\{p\in\mathbb{P}:(\alpha,p)\in A\}$is
a maximal antichain in $W_{\varphi,\check{\alpha},\dot{y}}$, where
$\varphi\equiv\exists\psi$ and $\dot{y}\in M^{\mathbb{P}}$ is a
$\mathbb{P}$-name for $y$. Note that technically $\varphi$ is not
of the form required by Definition \ref{def:set of strongly forcing coniditions},
but this is easy to fix by using Pairing.
\begin{claim*}
For $\alpha<\lambda$, $A_{\alpha}\cap g\neq\emptyset$.
\end{claim*}
\begin{proof}
Fix some $\alpha<\lambda$. Note that since $M[g]\models\exists x\psi(\alpha,x,y)$,
there is some $b\in M[g]$ such that $M[g]\models\psi(\alpha,b,y)$.
By Theorem \ref{thm:Sigma 1 Forcing Theorem} there is some $p\in g\cap W_{\varphi,\check{\alpha},\dot{y}}$.
We claim that this implies that $A_{\alpha}\cap g\neq\emptyset$.
To this end let $D_{\alpha}:=\{p\in\mathbb{P}:\forall q\in A_{\alpha}(p\perp q)\}$.
Note that since $A_{\alpha}\in M$, $D_{\alpha}\in M$. Moreover,
$A_{\alpha}\cup D_{\alpha}\in M$ is pre-dense subset of $\mathbb{P}$
in $M$ and thus, $g\cap(A_{\alpha}\cup D_{\alpha})\neq\emptyset$
by the $M$-genericity of $g$. Suppose for the sake of contradiction
that $g\cap D_{\alpha}\neq\emptyset$ and let $q\in g\cap D_{\alpha}$.
Since $q,p\in g$, there is $r\in g$ such that $r\leq q,p$. However,
this means that $r\in W_{\varphi,\check{\alpha},\dot{y}}$, since
$r\leq p$. But $r$ is incompatible with every element of $A_{\alpha}$,
so $A_{\alpha}$ is not a maximal antichain in $W_{\varphi,\check{\alpha},\dot{y}}$.
Contradiction! Thus, $A_{\alpha}\cap g\neq\emptyset$.
\end{proof}
We have established that for all $\alpha<\lambda$, $A_{\alpha}\cap g\neq\emptyset$.
Moreover, by the previous Lemma we have that 
\[
M\models\forall a\in A\exists\dot{x}(\exists p\in\mathbb{P}\exists\alpha<\lambda(a=(\alpha,p)\land p\Vdash_{M}^{\mathbb{P}}\psi(\check{\alpha},\dot{x},\dot{y})).
\]

This is the antecedence of an instance of the $\Sigma_{1}$-collection
scheme since the part in parentheses is $\Sigma_{1}$ (Note that we
are using here the fact that $\Vdash_{M}^{\mathbb{P}}$ for $\Sigma_{1}$-statements
is $\Sigma_{1}$-definable over $M$.). Thus, there is $z\in M$ such
that 
\[
M\models\forall a\in A\exists\dot{x}\in z(\exists p\in\mathbb{P}\exists\alpha<\lambda(a=(\alpha,p)\land p\Vdash_{M}^{\mathbb{P}}\psi(\check{\alpha},\dot{x},\dot{y}))).
\]
Let $z^{g}:=\{\dot{x}^{g}:x\in z\cap M^{\mathbb{P}}\}$. Note that
$z^{g}\in M[g]$, as $z\cap M^{\mathbb{P}}\in M\cap M^{\mathbb{P}}$.
It follows that 
\[
M[g]\models\forall\alpha<\lambda\exists x\in z^{g}(\psi(\alpha,x,y)),
\]
a contradiction!

Let us now turn towards the second case, i.e. we assume that for all
$\lambda<\delta$ there is $z\in M[g]$ such that 
\[
M[g]\models\forall\alpha<\lambda\exists x\in z(\psi(\alpha,x,y)).
\]
Let us associate to $\psi$ a function $f\in\Sigma_{1}^{M[g]}(M[g])$
such that $f:\delta\rightarrow\OR^{M[g]}$ and for $\alpha<\delta$,
$f(\alpha)$ is the least $\beta$ such that there is $x\in M[g]\mid\beta$
such that $M[g]\models\psi(\alpha,x,y)$. By our assumption $f\restriction\alpha\in M[g]$
for every $\alpha<\delta$, but $f\notin M[g]$. Let us define an
auxiliary function $F$ with domain $\delta$ such that $F(\alpha)=f\restriction\alpha$.
Note that $F\in\Sigma_{1}^{M[g]}(M[g])$, i.e. there is a $\Sigma_{1}$-formula
$\varphi_{F}$ and $z\in M[g]$ such that 
\[
(a,b)\in F\iff M[g]\models\varphi_{F}(a,b,z).
\]

Let 
\[
X:=\{p\in\mathbb{P}:\exists\beta<\OR^{M}(p\Vdash_{M\mid\beta}^{\mathbb{P}}\forall\alpha<\check{\delta}\exists y\varphi_{F}(\alpha,y,\dot{z}))\}.
\]

Note that $X\in\Sigma_{1}^{M}(M)$ and let $\varphi_{X}$ be the defining
formula and $a\in M$ the corresponding parameter.

We now aim to construct in a similar way as in the proof of the previous
Lemma a maximal antichain $A$ in $X$. Let $A_{0}:=\{p\}$ for some
$p\in X$. Suppose that $\langle A_{i}:i<\gamma\rangle$ is defined
via a $\Sigma_{1}$-recursion for some $\gamma<\OR^{M}$. If $\gamma$
is a limit ordinal, let $A_{\gamma}=\bigcup_{i<\gamma}A_{i}$. Then,
$A_{\gamma}\in M$ and $\langle A_{i}:i\leq\gamma\rangle\in\Sigma_{1}^{M}(M)$.
If $\gamma$ is not a limit, let $\beta_{\gamma}<\OR^{M}$ be the
least $\beta$ such that
\[
M\mid\beta\models\exists p\in\mathbb{P}\forall q\in A_{\gamma-1}(\varphi_{X}(p,a)\land q\perp p),
\]
if there exists such $\beta$. In the case that there is no such $\beta$,
stop the recursion. In case $\beta_{\gamma}$ is defined, let $p_{\gamma}$
be the $<_{M}$-least $p$ such that 
\[
M\mid\beta_{\gamma}\models\forall q\in A_{\gamma-1}q\perp p\land\varphi_{X}(p,a).
\]
Let $A_{\gamma}:=A_{\gamma-1}\cup\{p_{\gamma}\}$. Is is not hard
to verify that $\langle A_{i}:i\leq\gamma\rangle\in\Sigma_{1}^{M}(M)$
and $A_{\gamma}\in M$. 

Similar to the proof of the previous Lemma we aim to see that there
is a least $\gamma<\OR^{M}$ such that $\beta_{\gamma}$ is undefined,
which will show that $A:=A_{\gamma-1}$ is a maximal antichain in
$X$ and $A\in M$.

Suppose for the sake of contradiction that for all $\gamma<\OR^{M}$,
$\beta_{\gamma}$ is defined. Let $\bar{A}:=\bigcup_{\gamma<\OR^{M}}A_{\gamma}$.
Clearly, $\bar{A}\in\Sigma_{1}^{M}(M)$. Since $\bar{A}$ is by construction
an antichain, by Lemma \ref{lem:BoundedAntichains} $\bar{A}$ is
bounded in $\delta$. In particular, $\bar{A}\in M$. However, as
in the previous proof, the definition of $\bar{A}$ give rise to a
function $f\in\Sigma_{1}^{M}(M)$ such that $\dom(f)=\bar{A}$ and
$f$ is cofinal in $\OR^{M}$. But this is a contradiction!

Let $D:=\{p\in\mathbb{P}:\forall q\in A(q\perp p)\}$. As $A\in M$,
$D\in M$ and thus $A\cup D\in M$. $A\cup M$ is pre-dense and therefore,
$g\cap(A\cup D)\neq\emptyset$. Note that $g\cap A=\emptyset$, so
$g\cap D\neq\emptyset$.

Let $\tilde{p}\in D\cap g$. Note that this means that there is no
extension $p$ of $\tilde{p}$ such that $p\Vdash_{M}^{w.\mathbb{P}}\forall\alpha<\check{\delta}\exists y\varphi_{F}(\alpha,y,\dot{z})$.
Let $\langle p_{i}:i<\delta\rangle\in M$ be such that $p_{i}\Vdash_{M}^{w.\mathbb{P}}\exists y\varphi_{F}(i,y,\dot{z})$
and $p_{i}\leq\tilde{p}$ for all $i<\delta$. We may find such $\langle p_{i}:i<\delta\rangle$
in $M$, as $\Sigma_{1}$-bounding holds in $M$ and 
\[
M\models\forall i<\delta\exists p(p\Vdash_{M}^{w.\mathbb{P}}\exists y\varphi_{F}(i,y,\dot{z}))\land p\leq\tilde{p}).
\]
Note that for every $i<\delta$ there exists such $p\in\mathbb{P}$,
since $\tilde{p}\in g$ and $M[g]\models\forall i<\delta\exists y\varphi_{F}(i,y,z)$
and so by Theorem \ref{thm:Sigma 1 Forcing Theorem} the existence
of $p$ follows. 

Inside $M$ we are now going to divide $\langle p_{i}:i<\delta\rangle$
into $\delta$-many blocks in the following manner:

Set $\gamma_{0}=0$. For $\xi\in\delta\setminus1$ we define the auxiliary
sequence $\langle r_{i}^{\xi}:i<\delta\rangle$ in the following manner:
Let $r_{0}^{1}=p_{0}$. Suppose $\langle r_{i}^{1}:i<\gamma\rangle$
for $\gamma<\delta$ is defined. If there is $r\leq p_{\gamma}$ such
that for all $\xi<\gamma$, $r\perp p_{\xi}$, let $r_{\gamma}^{1}$
be such $r$. If there is no such $r\leq p_{\gamma}$, let $r_{\gamma}^{1}=\star$.
By the $\delta$-c.c. there is a $\gamma<\delta$ such that for all
$\xi\geq\gamma$, $r_{\xi}^{1}=\star$. Let $\gamma_{1}$ be the least
$\gamma$ such that for all $\xi\geq\gamma$, $r_{\xi}^{1}=\star$. 

Suppose that $\langle\gamma_{j}:j<\xi\rangle$ has been defined for
some $\xi<\delta$. If $\xi$ is a limit let $\gamma_{\xi}=\sup\{\gamma_{j}:j<\xi\}$.
If $\xi=\eta+1$ for some $\eta<\delta$ proceed as follows: Let $r_{0}^{\xi}=p_{\gamma_{\eta}}$.
Suppose that $\langle r_{i}^{\xi}:i<\gamma\rangle$ is defined for
some $\gamma<\delta$. If there is $r\leq p_{\gamma_{\eta}+\gamma}$
such that for all $i\in[\gamma_{\eta},\gamma_{\eta}+\gamma)$, $r\perp p_{i}$,
let $r_{\gamma}^{\xi}$ be such $r$. If there is no such $r$, let
$r_{\gamma}^{\xi}=\star$. As before, by the $\delta$-c.c. there
is $\gamma<\delta$ such that for all $i\geq\gamma$, $r_{i}^{\xi}=\star$.
Let $\gamma_{\xi}$ be the least such $\gamma$. Note that $\langle\gamma_{j}:j<\delta\rangle\in M$.

For $\xi<\delta$, set $D_{\xi}:=\{p\in\mathbb{P}:\exists i\in[\gamma_{\xi},\gamma_{\xi+1})(p\leq p_{i})\}$.
We define the following order on $\{D_{\xi}:\xi<\delta\}$: For $\xi,\xi^{\prime}<\delta$
let $D_{\xi}\leq D_{\xi^{\prime}}$ iff for all $p\in D_{\xi}$ there
exists $q\in D_{\xi^{\prime}}$ such that $p\mid\mid q$. We claim
that for $\xi<\xi^{\prime}<\delta$, $D_{\xi^{\prime}}\leq D_{\xi}$.
Let $r\in D_{\xi^{\prime}}$ and let this be witnessed by $i\in[\gamma_{\xi^{\prime}},\gamma_{\xi^{\prime}+1})$,
i.e. $r\leq p_{i}$. Suppose for the sake of contradiction that for
all $q\in D_{\xi}$, $p\perp q$. In particular, for all $j\in[\gamma_{\xi},\gamma_{\xi+1})$,
$p\perp p_{j}$. But then $r$ witnesses that $i\in[\gamma_{\xi},\gamma_{\xi+1})$.
Contradiction! Note that for $\xi<\xi^{\prime}<\delta$, such that
$D_{\xi^{\prime}}<D_{\xi}$, i.e. $D_{\xi^{\prime}}<D_{\xi}$ and
$D_{\xi}\not\leq D_{\xi^{\prime}}$, there exists $p\in D_{\xi}$
such that for all $q\in D_{\xi^{\prime}},$$p\perp q$. Thus, as $\mathbb{P}$
has the $\delta$-c.c. in $M$ and $(\{D_{\xi}:\xi<\delta\},\leq)\in M$,
there are no properly descending $<$-chains on $\{D_{\xi}:\xi<\delta\}$.
But this means that there is a least $i_{0}<\delta$ such that for
all $\xi,\xi^{\prime}\geq i_{0}$, $D_{\xi}=^{*}D_{\xi^{\prime}}$,
where $D_{\xi}=^{*}D_{\xi^{\prime}}$ means that $D_{\xi}\leq D_{\xi^{\prime}}$
and $D_{\xi}\geq D_{\xi^{\prime}}$. Let $D:=D_{i_{0}}$. Note that
as $D_{\xi}$ is trivially non-empty for every $\xi<\delta$, $D\neq\emptyset$.

Let $q\in D$. We claim that for all $\xi<\delta$ $q\Vdash_{M}^{w.\mathbb{P}}\exists j\in[\dot{\gamma}_{\xi},\dot{\gamma}_{\xi+1})\exists y\varphi_{F}(j,y,\dot{z})$.
By Theorem \ref{thm:Sigma 1 Forcing Theorem} this is equivalent to
$q\Vdash_{M}^{\mathbb{P}}\exists j\in[\dot{\gamma}_{\xi},\dot{\gamma}_{\xi+1})\exists y\varphi_{F}(j,y,\dot{z})$
for all $\xi<\delta$. 

Note that for every $\xi\ge i_{0}$, the set $C_{\xi}:=D_{\xi}\cup\{r\in\mathbb{P}:r\perp q\}$
is dense in $\mathbb{P}$: Let $p\in\mathbb{P}$. If $p\perp q$,
$p\in C_{\xi}$. If $p\mid\mid q$, then there is $r\leq p,q$. But
by definition of $D$, this means that $r\in D$. But then, since
$D=^{*}D_{\xi}$, there is $s\in D_{\xi}$ such that $s\mid\mid r$.
Let this be witnessed by $p^{\prime}$, i.e. $p^{\prime}\leq s,r$.
By the definition of $D_{\xi}$, $p^{\prime}\in D_{\xi}$ and $p^{\prime}\leq p$. 

But this means that for $\xi\geq i_{0}$, $D_{\xi}$ is dense below
$q$. Thus for any $p\leq q$, there is some $p^{\prime}\in D_{\xi}$
such that $p^{\prime}\leq p$. But by definition $p^{\prime}\leq p_{j}$
for some $j\in[\gamma_{\xi},\gamma_{\xi+1})$ and thus, $p^{\prime}\Vdash_{M}^{\mathbb{P}}\exists y\varphi_{F}(\check{j},y,\dot{z})$.
This implies that $p^{\prime}\Vdash_{M}^{\mathbb{P}}\exists j\in[\dot{\gamma}_{\xi},\dot{\gamma}_{\xi+1})\exists y\varphi_{F}(j,y,\dot{z})$,
which is what we wanted. 

Note that by the definition of $F$, if $F(\alpha)$ is defined, then
$F(\beta)$ is defined for all $\beta<\alpha$. Since $\sup\{\gamma_{\xi}:\xi<\delta\}=\delta$,
this implies that for all $j<\delta$, $q\Vdash_{M}^{\mathbb{P}}\exists y\varphi_{F}(\check{j},y,\dot{z})$.
By Theorem \ref{thm:Sigma 1 Forcing Theorem}, there is for every
$j<\delta$ some $\eta_{j}$ and $\dot{y}\in M^{\mathbb{P}}$ such
that $q\Vdash_{M\mid\eta_{j}}^{\mathbb{P}}\varphi_{F}(\dot{j},\dot{y},\dot{z})$.
But then by $\Sigma_{1}$-bounding there is some $\eta<\OR^{M}$ such
that 
\[
M\models\forall j<\delta\exists\dot{y}\in M\mid\eta(q\Vdash_{M\mid\eta}^{\mathbb{P}}\varphi_{F}(\dot{j},\dot{y},\dot{z})).
\]
This means $q\in X$. But since $q\leq\tilde{p}$, $q\perp r$ for
all $r\in A$. Contradiction, since $A$ was supposed to be a maximal
antichain in $X$!
\end{proof}
\begin{rem}
Note that in general it is not the case that if $N$ is admissible
and $g\subset\mathbb{P}$ is $N$-generic for some forcing poset $\mathbb{P}\in N$,
then $N[g]$ is admissible. For this to hold $g$ must meet all dense
open subsets of $\mathbb{P}$ that are unions of a $\boldsymbol{\Sigma_{1}}$
and a $\boldsymbol{\Pi_{1}}$ class over $N$. See \cite{MR3337222}
for more on this. Proposition \ref{prop:counterexample} gives an
explicit example of the generic extension to be admissible.
\end{rem}

From the proof of Theorem \ref{thm:KP in delta c.c. forcings} we
immediately get the following two Corollaries
\begin{cor}
Let $M$ be an admissible premouse with a largest, regular, and uncountable
cardinal $\delta$ such that $\rho_{1}^{M}<\OR^{M}$ and $M$ is $1$-sound.
Let $\mathbb{P}\in M$ is such that $M\models"\mathbb{P}\text{ has the }\delta\text{-c.c.}"$.
Then $M[g]\models KP$ for any $M$-generic $g\subset\mathbb{P}$.
\end{cor}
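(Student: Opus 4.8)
The plan is not to give a new argument but to transfer the proof of Theorem~\ref{thm:KP in delta c.c. forcings}. The observation is that the hypothesis ``$0$-countably iterable'' is used in that proof, and in the proof of the auxiliary Lemma~\ref{lem:maximal antichain lemma} on which it relies, only to guarantee that $\rho_{1}^{M}$ is the largest cardinal of $M$, i.e.\ that $\rho_{1}^{M}=\delta$. Iterability enters there exactly through Lemma~\ref{lem:Projectum of KP Structures} (equivalently Corollary~\ref{cor:Iterability implies soundness}); once $\rho_{1}^{M}=\delta$ is known, every remaining step needs only that $M$ is admissible with a largest, regular, uncountable cardinal $\delta$ and that $\mathbb{P}$ has the $\delta$-c.c.\ in $M$. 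In particular, the identity $\rho_{1}^{M}=\delta$ is what licenses the repeated inference that a subset of $\delta$ which is $\Sigma_{1}^{M}(M)$ and bounded in $\delta$ lies in $M$ (immediate from the definition of the first projectum: below $\rho_{1}^{M}$ every $\Sigma_{1}^{M}(M)$ subset is an element of $M$); the $\Sigma_{1}$-recursions building the maximal antichains, the appeals to Lemma~\ref{lem:BoundedAntichains}, the $\delta$-c.c.\ block decomposition, and the final application of $\Sigma_{1}$-collection are then untouched, and the remaining inputs (Theorem~\ref{thm:Sigma 1 Forcing Theorem}, Lemma~\ref{lem:Sigma1Regularity}, and the standard definability facts about the forcing relation over admissible structures from Definition~\ref{def:set of strongly forcing coniditions}) require only admissibility of $M$.

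So first I would invoke Lemma~\ref{lem:1 sound largest cardinal}: since $M$ is an admissible, $1$-sound premouse with $\rho_{1}^{M}<\OR^{M}$, that lemma gives that $\rho_{1}^{M}$ is the largest cardinal of $M$; as $\delta$ is by hypothesis the largest cardinal of $M$, we get $\rho_{1}^{M}=\delta$. Second, I would re-run the proof of Lemma~\ref{lem:maximal antichain lemma} and then of Theorem~\ref{thm:KP in delta c.c. forcings} with ``$\rho_{1}^{M}=\delta$'' used in place of the appeal to iterability (the only two cited statements whose hypotheses mention iterability), every other line being verbatim. This yields $M[g]\models KP$ for every $M$-generic $g\subseteq\mathbb{P}$, which is the claim.

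The one thing that actually needs checking — the main obstacle, such as it is — is the audit confirming that no step in those proofs uses iterability in a way not captured by the single consequence $\rho_{1}^{M}=\delta$; once one verifies, as above, that the definability machinery for the forcing relation over admissible structures and Theorem~\ref{thm:Sigma 1 Forcing Theorem} use only admissibility of $M$, the substitution is legitimate and the corollary follows.
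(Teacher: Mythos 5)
Your proposal is correct and matches the paper's intent exactly: the paper derives this corollary by simply noting it follows "from the proof of Theorem~\ref{thm:KP in delta c.c. forcings}", and the mechanism is precisely the one you identify, namely that $0$-countable iterability enters only through Lemma~\ref{lem:Projectum of KP Structures} to guarantee $\rho_{1}^{M}\geq\delta$ (so that bounded $\Sigma_{1}^{M}(M)$ subsets of $\delta$ lie in $M$), and Lemma~\ref{lem:1 sound largest cardinal} supplies this from $1$-soundness and $\rho_{1}^{M}<\OR^{M}$ instead.
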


\begin{cor}
\label{cor:KP preservations extender algebra}Suppose that $M$ is
a\textcolor{red}{{} }$0$-countably iterable admissible premouse with
a largest, regular, and uncountable cardinal $\delta$ and let $\mathbb{B}\in M$
be the extender algebra with $\delta$-many generators as defined
inside $M$. Let $g\subset\mathbb{B}$ be $M$-generic. Then $M[g]\models KP$.
\end{cor}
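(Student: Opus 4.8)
The plan is to deduce this directly from Theorem \ref{thm:KP in delta c.c. forcings}, so the only real work is verifying the hypotheses of that theorem for the extender algebra $\mathbb{B}$. Recall that $M$ is assumed to be a $0$-countably iterable admissible premouse with a largest, regular, uncountable cardinal $\delta$. First I would note that the extender algebra $\mathbb{B}$ with $\delta$-many generators, as defined inside $M$, is by construction an element of $M$ of size $\delta$ (its underlying set of conditions is essentially a set of bounded formulas/propositions indexed by ordinals below $\delta$, so $\mathbb{B}\subset\delta$ after the usual coding, exactly as in the remark preceding Lemma \ref{lem:BoundedAntichains}). So the ``$\mathbb{P}\in M$'' hypothesis is immediate.

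The one substantive point is the $\delta$-c.c. The key input is the Woodinness of $\delta$ in $M$: the whole design of the extender algebra is that, using the extenders on the $M$-sequence with critical points below $\delta$ witnessing reflection, any antichain of size $\delta$ can be refuted by an appropriate reflection argument, so $M\models$ ``$\mathbb{B}$ has the $\delta$-c.c.''. This is the standard Woodin-cardinal extender-algebra argument (the ``$\delta$-c.c.'' part of the usual extender algebra theorem), carried out inside $M$; I would simply cite that $\mathbb{B}$ is defined precisely so that this holds in $M$, since $\delta$ is Woodin in $M$ and $M$ has enough extenders on its sequence. Here one uses that $M$ is a premouse with $\delta$ Woodin, not merely that $M$ is admissible.

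With $\mathbb{B}\in M$, $M\models$``$\mathbb{B}$ has the $\delta$-c.c.'', $\delta$ the largest cardinal of $M$ which is regular and uncountable, and $M$ $0$-countably iterable and admissible, Theorem \ref{thm:KP in delta c.c. forcings} applies verbatim: for any $M$-generic $g\subset\mathbb{B}$ we get $M[g]\models KP$. I expect the main (really the only) obstacle to be making the $\delta$-c.c. claim precise, i.e. pinning down which version of the extender algebra is meant and checking that the Woodinness of $\delta$ in $M$ — together with the extenders available on the $M$-sequence — suffices to run the antichain-refutation argument \emph{internally to $M$}; once that is granted, the corollary is an immediate instance of the theorem and requires no further argument.
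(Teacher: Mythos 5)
Your proposal is correct and matches the paper's intent exactly: the corollary is stated there as an immediate instance of Theorem \ref{thm:KP in delta c.c. forcings}, the only point to check being that $\mathbb{B}\in M$ and that $M\models"\mathbb{B}\text{ has the }\delta\text{-c.c.}"$, which is the standard extender-algebra fact using that $\delta$ is Woodin in $M$ (implicit in the phrase ``the extender algebra as defined inside $M$''). No further comment is needed.
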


If we replaced $M\models"\mathbb{P}\text{ has the }\delta\text{-c.c.}"$
in the statement of Theorem \ref{thm:KP in delta c.c. forcings} with
$M\models"\mathbb{P}\text{ is }<\delta\text{-closed}"$, Theorem \ref{thm:KP in delta c.c. forcings}
is false as the following Proposition shows
\begin{lem}
\label{lem:Stacking Mice}Suppose that $M$, $M_{1}$, and $M_{2}$
are sound premice such that $\OR^{M}$ is a regular cardinal (in $V$),
$\rho_{\omega}^{M_{1}}=\rho_{\omega}^{M_{2}}=\OR^{M}$, and Condensation
holds of $M_{1}$ and $M_{2}$. Then, either $M_{1}\unlhd M_{2}$
or $M_{2}\unlhd M_{1}$.
\end{lem}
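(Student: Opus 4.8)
The plan is to run the standard comparison/coiteration argument on $M_1$ and $M_2$, using soundness and Condensation to show that the coiteration is trivial above $\OR^M$, and then conclude that one of $M_1$, $M_2$ is an initial segment of the other. First I would set $\kappa := \OR^M$, which by hypothesis is a regular cardinal in $V$, and note that since $\rho_\omega^{M_1} = \rho_\omega^{M_2} = \kappa$ and both are sound, each $M_i$ is the transitive collapse of its own $\Sigma_\omega$-hull of $\kappa \cup \{p_\omega^{M_i}\}$; in particular $M_i \cap V_\kappa = M \cap V_\kappa$ in the appropriate sense, i.e. $M_1$ and $M_2$ agree below $\kappa$ (they both end-extend $M$, or at least $M \restriction \kappa$). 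The key point is that in the coiteration $(\T,\U)$ of $(M_1,M_2)$, every extender used has critical point $\geq \kappa$: an extender with critical point $< \kappa$ applied to $M_i$ would move $\rho_\omega^{M_i} = \kappa$, contradicting that the comparison extenders are chosen to have length above the current least point of disagreement, combined with the fact that the two models already agree below $\kappa$ and $\kappa$ is a cardinal of both. So the first extenders on each side, if any, have critical point exactly $\kappa$ (or the coiteration is already trivial).

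Next I would argue that in fact the coiteration is trivial on both sides, i.e. $\lh(\T) = \lh(\U) = 1$. Suppose not; then on at least one side, say the $M_1$-side, a nontrivial extender $E$ with $\crt(E) \geq \kappa$ is applied. Since $\kappa = \OR^M$ is regular in $V$ and $\rho_\omega^{M_1} = \kappa$, a standard argument (as in the proof that sound mice projecting to $\kappa$ cannot be moved nontrivially in comparisons against mice agreeing below $\kappa$) shows this is impossible: using soundness, $M_1 = \cH{M_1}{\omega}(\kappa \cup \{p\})$, and Condensation applied to the image structures forces the iterate to collapse back to $M_1$ itself, so no genuine disagreement above $\kappa$ can be created. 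More concretely, after the standard successful comparison terminates with last models $\M_\infty^\T \unlhd \M_\infty^\U$ (or vice versa) and no drop on the $\M_\infty^\T$ side, the branch embedding $i: M_1 \to \M_\infty^\T$ has critical point $\geq \kappa$; but then by Condensation (its hypotheses holding of $M_1$ by assumption) $\M_\infty^\T \restriction (\text{sup of the hull})$ condenses to $M_1$, and together with $\rho_\omega$-agreement this yields $i = \id$, so $\T$ was trivial; symmetrically $\U$ was trivial. Hence $M_1 = \M_\infty^\T \unlhd \M_\infty^\U = M_2$ or $M_2 \unlhd M_1$, which is the conclusion.

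The main obstacle I anticipate is making precise the ``no extender with critical point $<\kappa$ and no nontrivial extender with critical point $\geq \kappa$'' step without simply invoking a black-box comparison theorem: one has to know that the comparison of $M_1$ and $M_2$ is well-defined and successful in the first place, which normally requires iterability, and here we are only given Condensation (plus soundness and regularity of $\OR^M$). So the real work is to observe that for this particular configuration the comparison process never actually needs to choose a branch at a limit stage of uncountable cofinality — because, by the critical-point analysis above, any putative comparison iteration must be trivial of length $1$, so no iterability hypothesis is needed. I would phrase the argument as: assume toward a contradiction that $M_1 \ntrianglelefteq M_2$ and $M_2 \ntrianglelefteq M_1$; let $\gamma$ be least such that $M_1 \restriction \gamma \neq M_2 \restriction \gamma$ (this $\gamma > \kappa$ since the models agree below $\kappa$ by soundness and $\rho_\omega$-agreement); examine the structures at $\gamma$ on each side and derive, via Condensation applied to a hull of size $<\kappa$ and the regularity of $\kappa$, that in fact $M_1 \restriction \gamma = M_2 \restriction \gamma$, contradiction. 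This is the approach taken in the literature (e.g. the proof that the stack of mice projecting to a regular cardinal and satisfying Condensation is linearly ordered), and I would model the write-up on that.
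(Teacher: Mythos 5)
The paper does not actually prove this lemma; it only remarks that it ``follows from the proof of Lemma 3.1 in \cite{jensen_schimmerling_schindler_steel_2009}'', and your closing paragraph correctly identifies that argument (hulls of size $<\OR^{M}$ plus Condensation plus regularity) as the one to run. So your final destination is right. The problem is that the first two thirds of your proposal — the coiteration of $(M_{1},M_{2})$ — is not a viable route and cannot be repaired in the way you suggest. The lemma assumes only soundness, Condensation, and regularity of $\OR^{M}$; no iterability of $M_{1}$ or $M_{2}$ is given, and indeed in the paper's application (Proposition \ref{prop:counterexample}) the lemma is invoked inside a generic extension precisely because the stack of \emph{all} sound premice over $M\mid\delta$ satisfying Condensation must be linearly ordered, not just the iterable ones. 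Your proposed fix — ``any putative comparison must be trivial of length $1$, so no iterability is needed'' — is circular: to argue that the comparison terminates trivially you already need the comparison to be a well-defined process whose last models line up, i.e.\ you need exactly the successful-coiteration theorem you are trying to avoid. One cannot first form $\M_{\infty}^{\T}\unlhd\M_{\infty}^{\U}$ and then argue backwards that $\T$ and $\U$ were trivial.

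The argument you should write out instead is the one you name at the end, and it bypasses comparison entirely. Set $\kappa:=\OR^{M}$ and note the implicit hypothesis (which you correctly supply, and which is needed for the statement to be true) that $M_{1}\mid\kappa=M_{2}\mid\kappa=M$. Suppose neither $M_{1}\unlhd M_{2}$ nor $M_{2}\unlhd M_{1}$. Take $X\prec H_{\theta}$ with $M_{1},M_{2}\in X$, $|X|<\kappa$ and $X\cap\kappa\in\kappa$; let $\sigma:H\rightarrow H_{\theta}$ invert the collapse and let $\bar{M}_{i}$ be the image of $M_{i}$, a sound premouse projecting to $\bar{\kappa}:=X\cap\kappa=\crt(\sigma)$ with $\sigma\restriction\bar{M}_{i}:\bar{M}_{i}\rightarrow M_{i}$ elementary. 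Condensation (applied to $M_{i}$, which is where that hypothesis enters) yields $\bar{M}_{i}\lhd M_{i}$, and since $\rho_{\omega}^{\bar{M}_{i}}=\bar{\kappa}<\kappa$ and $\kappa$ is a cardinal of $M_{i}$, in fact $\bar{M}_{i}\lhd M_{i}\mid\kappa=M$. Hence $\bar{M}_{1}$ and $\bar{M}_{2}$ are both initial segments of the single premouse $M$ and are therefore comparable. Finally, soundness lets you pull this back up: $M_{i}=\Hull{M_{i}}{\omega}(\kappa\cup\{p_{\omega}^{M_{i}}\})$ is recovered from the coherent sequence of theories $\Th_{\Sigma_{\omega}}^{M_{i}}(\alpha\cup\{p_{\omega}^{M_{i}}\})$ for $\alpha<\kappa$, and $\bar{M}_{i}$ computes exactly the $\alpha<\bar{\kappa}$ portion of this sequence; regularity of $\kappa$ gives club-many suitable $X$ and stabilizes which of $\bar{M}_{1}\unlhd\bar{M}_{2}$, $\bar{M}_{2}\unlhd\bar{M}_{1}$ holds, and unboundedly many agreements of the theory sequences force $M_{1}\unlhd M_{2}$ or $M_{2}\unlhd M_{1}$, a contradiction. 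This is the content of Lemma 3.1 of \cite{jensen_schimmerling_schindler_steel_2009}; your write-up should follow it rather than the comparison sketch.
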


The Lemma follows from the proof of Lemma 3.1. in \cite{jensen_schimmerling_schindler_steel_2009}.
\begin{prop}
\label{prop:counterexample}Let $M$ be a $1$-sound admissible premouse
with a largest, regular cardinal $\delta$ such that $\rho_{1}^{M}<\OR^{M}$.
Let $\mathbb{C}_{\delta}:=(\delta^{<\delta})^{M}$. Then there is
an $M$-generic $g\subset\mathbb{C}_{\delta}$ such that $M[g]\not\models KP$.
\end{prop}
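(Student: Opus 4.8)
The plan is to exploit the fact that $\mathbb{C}_\delta = (\delta^{<\delta})^M$ is $<\!\delta$-closed in $M$ and has size (a surjective image of) $\delta$, so that a generic $g$ adds a surjection $f \colon \delta \to \OR^M$ of cofinal range, contradicting $\Sigma_0$-bounding (indeed even Replacement) in $M[g]$. By Lemma \ref{lem:1 sound largest cardinal} (applied to the $1$-sound admissible $M$ with $\rho_1^M<\OR^M$) we have $\rho_1^M = \delta = \lgcd(M)$, so there is a $\Sigma_1^M(M)$-surjection, and in fact a cofinal $r\Sigma_1$-map, witnessing $\OR^M \in \Sigma_1^M(\{p_1^M,\delta\})$ via the theory $T^M$; concretely, using $1$-soundness, $\OR^M = \sup\{\eta : \eta \in \unHull_1^M(\delta\cup\{p_1^M\})\}$ and there is a $\Sigma_1$-definable surjection $h\colon \delta \to \OR^M$ that is \emph{not} a member of $M$ but whose restrictions $h\restriction\alpha$ for $\alpha<\delta$ all lie in $M$.

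First I would build, inside $M$, a descending sequence of conditions coding an initial segment of such an $h$. More precisely: since $\mathbb{C}_\delta$ is $<\!\delta$-closed in $M$, for each $\alpha<\delta$ the object $h\restriction\alpha$ (a function $\alpha\to\OR^M$, hence coded by an element of $M$) can be captured by a single condition; and because $\delta$ is $M$-regular, one can recursively (inside $M$, using that the sequence of relevant conditions is $\Sigma_1^M(M)$ and invoking $\Sigma_1$-bounding only on bounded pieces) thread these into a dense set $D \in \Sigma_1^M(M)$ of conditions that decide longer and longer initial segments of $h$. The point is that \emph{no} single condition can decide all of $h$ (that would put $h\in M$), but the union over a generic filter will. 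So let $g \subset \mathbb{C}_\delta$ be $M$-generic meeting, for each $\alpha<\delta$, the (genuinely dense, and lying in $M$) set $D_\alpha := \{ p : p \text{ decides } h\restriction\alpha \text{ correctly}\}$; here $D_\alpha\in M$ because $h\restriction\alpha\in M$ and $\mathbb{C}_\delta$ is $<\!\delta$-closed, so below any condition there is one of length $\geq\alpha$ extending the canonical condition that forces $h\restriction\alpha$ into the generic.

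Then in $M[g]$ the union $\bigcup g$ yields (a code for) the function $h\colon\delta\to\OR^M$, whose range is cofinal in $\OR^M = \OR^{M[g]}$. This contradicts $\Sigma_0$-bounding in $M[g]$: taking $\psi(\alpha,x)$ to assert "$x$ codes an ordinal and $x = h(\alpha)$" (which is $\Sigma_0$ once $h$, i.e. $\bigcup g$, is a parameter), we would have $M[g]\models\forall\alpha<\delta\,\exists x\,\psi(\alpha,x)$ but no bounding set, since a bounding set would bound $\operatorname{ran}(h)$ below $\OR^{M[g]}$. Hence $M[g]\not\models KP$.

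\textbf{Main obstacle.} The delicate point is not the forcing-theoretic side but producing, \emph{inside $M$}, the $\Sigma_1^M(M)$-definable "approximating" structure (the sequence of conditions forcing the $h\restriction\alpha$'s) together with the verification that the sets $D_\alpha$ are dense and lie in $M$ — one must use $<\!\delta$-closedness to get the $D_\alpha\in M$ (a $\Sigma_1$ definition alone would not suffice, and this is exactly where the hypothesis differs from the $\delta$-c.c. case of Theorem \ref{thm:KP in delta c.c. forcings}), and one must use $M$-regularity of $\delta$ together with Lemma \ref{lem:1 sound largest cardinal} to ensure the \emph{union} of the approximations escapes $M$ while each approximation does not. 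I expect the cleanest route is to phrase $h$ directly via $T^M$: let $h(\alpha)$ enumerate the ordinals appearing (under the canonical $\Sigma_1$-Skolem functions, with parameters from $\alpha\cup\{p_1^M\}$) in $\Th_{\Sigma_1}^M(\alpha\cup\{p_1^M\})$; since $T^M\cap\alpha\in M$ for every $\alpha<\delta$ but $T^M\notin M$, the functions $h\restriction\alpha$ lie in $M$ and $h$ does not, and $\operatorname{ran}(h)$ is cofinal in $\OR^M$ by $1$-soundness.
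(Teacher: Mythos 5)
There is a genuine gap, and it sits at the very center of your argument: the function $h$ you want to code cannot exist as described. You posit a \emph{total} surjection $h\colon\delta\to\OR^{M}$ with cofinal range which is $\Sigma_{1}^{M}(M)$-definable but not a member of $M$. By the paper's own criterion (Lemma \ref{lem:KP in Premice}), admissibility of $M$ implies that every total $\Sigma_{1}^{M}(M)$ function with domain $\delta\in M$ \emph{is} an element of $M$, and then its range is bounded in $\OR^{M}$; so "total, $\Sigma_{1}$, cofinal, not in $M$" is contradictory. Your concrete candidate via $T^{M}$ does not repair this: the map $\alpha\mapsto\Th_{\Sigma_{1}}^{M}(\alpha\cup\{p_{1}^{M}\})$ (or $\alpha\mapsto$ the least level witnessing that theory) has an essentially $\Pi_{1}$ graph, since one must assert that a candidate level bounds \emph{all} witnesses of a $\Sigma_{1}$ predicate. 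This is not a cosmetic issue, because your final contradiction needs "$x=h(\alpha)$" to be $\Sigma_{0}$ (or at worst $\Sigma_{1}$) over $M[g]$ with $g$ as a parameter. A secondary problem: the sets $D_{\alpha}=\{p:p\text{ decides }h\restriction\alpha\text{ correctly}\}$ are not dense for the naive coding (a condition that has already coded a wrong value for $h(0)$ has no extension in $D_{\alpha}$), and the obvious repair ("$p$ codes $h\restriction\alpha$ somewhere") destroys the $\Sigma_{0}$-decodability of $h$ from $\bigcup g$, since recognizing which blocks of the generic are \emph{correct} codes is again $\Pi_{1}$.

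Your overall strategy (force a cofinal total $\delta$-sequence into $M[g]$ and contradict $\Sigma_{1}$-bounding there) is the right one and is what the paper does, but the object coded must be chosen so that everything stays $\Sigma_{1}$. The paper takes the $\Sigma_{1}^{M}(\{p_{1}^{M}\})$ partial surjection $h\colon\delta\to M$ coming from $1$-soundness and codes the set $\tilde{D}=\{\xi<\delta:h(\xi)\text{ is a dense subset of }\mathbb{C}_{\delta}\}$ — membership here is genuinely $\Sigma_{1}$ (one only needs to \emph{find} the value $h(\xi)$; density is then $\Sigma_{0}$ in it), $\tilde{D}\notin M$ because $M$ cannot construct its own generic, and its monotone enumeration $D$ is coded into the generic on a designated interleaved set of coordinates while the remaining steps secure genericity by meeting each $h(\xi)$, $\xi\in\tilde{D}$. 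The cofinal map is then the genuinely $\Sigma_{1}$ partial function $h'(\xi)=$ the least $\beta$ with $h(\xi)\in M\mid\beta$, restricted to $\tilde{D}$ (cofinality of its range uses $<\delta$-closure and atomlessness, not soundness of a theory-function), together with Lemma \ref{lem:Stacking Mice} to make $\E^{M}$, and hence $h'$, $\Sigma_{1}$-definable over $M[g]$. You would need to restructure your proof along these lines.
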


\begin{proof}
Note that $\mathbb{\mathbb{C}_{\delta}}\in M$ is such that 
\[
M\models"\mathbb{C}_{\delta}\text{ is a }<\delta\text{-closed, splitting, and atom-less forcing}".
\]
By Lemma \ref{lem:1 sound largest cardinal} and $1$-soundness, $\delta=\rho_{1}^{M}$
and $M=H_{1}^{M}(\delta\cup\{p_{1}^{M}\})$. Thus, there is a partial
surjective function $h:\delta\rightarrow M$ such that $h\in\Sigma_{1}^{M}(\{p_{1}^{M}\})$.
Let 
\[
\tilde{D}:=\{\xi<\delta:M\models"h(\xi)\subset\mathbb{C}_{\delta}\text{ is dense in }\mathbb{C}_{\delta}"\}.
\]
Note that $\tilde{D}\notin M$, as otherwise $M$ could construct
an $M$-generic. Let 
\[
D:=\langle\xi_{i}:i<\delta\rangle
\]
 be the monotone enumeration of $\tilde{D}$. (It is easy to see that
there $\delta$-many dense subsets in $M$.) Note, since $h\in\Sigma_{1}^{M}(M)$,
$\tilde{D}\in\Sigma_{1}^{M}(M)$. However, $D\notin\Sigma_{1}^{M}(M)$,
as otherwise by $\Sigma_{1}$-bounding, $D\in M$.

The idea is now to construct an $M$-generic $g$ which codes in a
$\Sigma_{1}$-fashion the set $D$ so that $D\in M[g]$ if $M[g]\models KP$. 

Let us now define $\tilde{g}$ via a $\Sigma_{1}$-recursion. We will
have $\tilde{g}=\{p_{i}:i<\delta\}$ such that $p_{i}\leq p_{j}$
for $j<i$. Set $p_{0}:=\langle\xi_{0}\rangle$ and $p_{1}$ to be
the $<_{M}$-least $p\in h(\xi_{0})$ such that $p\leq p_{0}$. Note
that there is such $p$, since $h(\xi_{0})$ is dense in $\mathbb{P}$.

Suppose that $\langle p_{i}:i<\lambda\rangle$ is defined, where $\lambda<\delta$
is a limit ordinal, such that $p_{i}\leq p_{j}$ for $j\leq i<\lambda$.
Note that $D\restriction\lambda\in M$, since $D\in\Sigma_{1}^{M}(M)$
and $\delta$ is $\Sigma_{1}$-regular by Lemma \ref{lem:Sigma1Regularity}.
But since the construction so far is $\Sigma_{1}$ in the parameter
$D\restriction\lambda$ and $\mathbb{C}_{\delta}$ this implies that
$\langle p_{i}:i<\lambda\rangle\in M$. Thus, we can set $p_{\lambda}=(\bigcup_{i<\lambda}p_{i})^{\frown}\langle\xi_{\lambda}\rangle$.
Let $p_{\lambda+1}$ be the $<_{M}$-least $p\in h(\xi_{\lambda})$
such that $p\leq p_{\lambda}$.

We now turn towards the successor case. Suppose that $\langle p_{i}:i<\eta\rangle$
is defined, where $\eta=\gamma+1$ for some $\gamma<\delta$, which
is a successor. We distinguish whether $\eta$ is odd or even. If
$\eta$ is even, i.e. $\eta=\lambda+2n$ for some $n<\omega$ and
some limit $\lambda\leq\eta$, set $p_{\eta}:=p_{\gamma}^{\frown}\langle\xi_{\lambda+n}\rangle$.
If $\eta$ is odd, let $p_{\eta}$ be the $<_{M}$-least $p\in h(\xi_{\gamma})$
such that $p\leq p_{\gamma}$.

Let $g$ be the upwards-closure of $\tilde{g}$ in $\mathbb{\mathbb{C}_{\delta}}$.
By definition of $\tilde{g}$, $g$ is $M$-generic. Moreover, it
is easy to see that $D$ is $\Sigma_{1}$-definable from $\{\mathbb{C}_{\delta},\rho_{1}^{M},g\}$.
Thus, if $M[g]\models KP$, then $D\in M[g]$. Let $h^{\prime}:\delta\rightarrow\OR^{M}$
be such that $h^{\prime}(\alpha)$ is the least $\beta<\OR^{M}$ such
that $h(\alpha)\in M\mid\beta$, if $h(\alpha)$ is defined. Clearly,
$h^{\prime}\in\Sigma_{1}^{M}(\{p_{1}^{M}\})$. 

Note that since $\mathbb{E}^{M}\restriction\delta\in M[g]$ by the
previous Lemma \ref{lem:Stacking Mice} we can define $\E^{M}$ over
$M[g]$ in a $\Sigma_{1}$-fashion as the extender sequence of the
stack of sound premice extending $M\mid\delta$. Since $M$ is passive,
it follows that $M\in\Sigma_{1}^{M[g]}(M[g])$. Thus, $h^{\prime}\in\Sigma_{1}^{M[g]}(M[g])$.
Moreover, as $\mathbb{P}$ is $<\delta$-closed and atom-less, $\ran(h^{\prime}\restriction D)$
is cofinal in $M$. As $\OR^{M}=\OR^{M[g]}$, $h\restriction D$ is
cofinal in $M[g]$. But now if $M[g]\models KP$, since $D\in M[g]$
, $\OR^{M[g]}\in M[g]$. Contradiction! Thus, $M[g]\not\models KP$.
\end{proof}
Next we deal with the preservation of admissibility in iteration trees.
One instance of this is needed in the proof of Lemma \ref{lem:Second Key Lemma}.
The instance that $N\models KP$ if $N$ is a non-dropping iterate
of an admissible premouse with a largest, regular, and uncountable
cardinal is neither needed for the proof of Lemma \ref{lem:Second Key Lemma},
nor for the construction of the counterexample. However, we include
it since together with the previous Corollary \ref{cor:KP preservations extender algebra}
it gives a nice version of genericity iterations for admissible premice
with a largest, regular, and uncountable cardinals.
\begin{lem}
\label{lem:KP is preseverved by rSigma_3}Suppose that $M$ and $N$
are premice and let $i:M\rightarrow N$ be $r\Sigma_{3}$-elementary.
Then $M\models KP$ iff $N\models KP$.
\end{lem}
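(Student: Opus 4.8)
The plan is to characterize $KP$ via Lemma~\ref{lem:KP in Premice}: a premouse $P$ models $KP$ iff every $f \in \Sigma_1^P(P)$ with $\dom(f) \in P$ has $f \in P$. The strategy is to show that the failure of this condition is expressible by a formula of the syntactic complexity that $r\Sigma_3$-elementarity preserves, so that $M \models KP$ transfers to $N$ and back. First I would unwind what it means for $KP$ to fail: there is a $\Sigma_1$ formula $\varphi(u,v,p)$ and a parameter $p$ and a point $a$ such that $f := \{(u,v) : P \models \varphi(u,v,p)\}$ is a function with domain $a$ but $f \notin P$. The natural way to say ``$f \notin P$'' is to say that $f$ is unbounded in the ordinals of $P$ in the sense that for every $\beta < \OR^P$ there is some $u \in a$ with $P \models \exists v\, (\varphi(u,v,p) \wedge v \notin P\mid\beta)$; equivalently, that $\Sigma_1$-collection fails for $\varphi$ restricted to $a$. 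The key observation is that ``$\varphi$ defines a function on $a$ and collection fails for it'' can be written, after bookkeeping, as a Boolean combination whose overall quantifier complexity over $P$ is $\Pi_3$ (or $\Sigma_3$) in the parameters $a,p$.

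Concretely, I would fix the standard $\Sigma_1$ and $\Pi_1$ truth predicates (uniformly definable over all premice, as in the usual fine-structural setup) and write: (1) ``$\varphi$ is total and functional on $a$'' is $\forall u \in a\, \exists v\, \varphi(u,v,p) \wedge \forall u \in a\, \forall v\, \forall v'\,(\varphi(u,v,p) \wedge \varphi(u,v',p) \to v = v')$, which is $\Pi_2$ over $P$ since the inner $\varphi$ is $\Sigma_1$ and its negation $\Pi_1$; (2) ``collection fails,'' i.e.\ $\forall \beta\, \exists u \in a\, \forall v \in P\mid\beta\, \neg\varphi(u,v,p)$, which, because bounded quantification over $P\mid\beta$ of a $\Pi_1$ matrix stays roughly $\Pi_1$ and $\beta$ is an ordinal, comes out $\Pi_2$ as well, but the outer $\forall\beta\,\exists u \in a$ block pushes it to $\Pi_3$. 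Taking the conjunction and then existentially quantifying over the (coded) formula $\varphi$ and the parameters $a,p$, the statement ``$P \not\models KP$'' is $\Sigma_3$ over $P$ (a single block of existential quantifiers over a $\Pi_2$-ish core, giving $\Sigma_3$). Since $i\colon M \to N$ is $r\Sigma_3$-elementary, and since premice models are amenable enough that the $r\Sigma_n$-hierarchy computed over $M$ and $N$ matches the $\Sigma_n$-definability used here, $M \not\models KP$ iff $N \not\models KP$, which is the claim. One must also use that $i$ maps $\OR^M$ cofinally into $\OR^N$ or, failing that, handle the ordinal quantifier $\forall\beta$ via $r\Sigma_3$-elementarity directly; but in fact $r\Sigma_3$-elementarity of $i$ between premice already entails that $i$ is $\in$-preserving and cofinal in the relevant sense, so this is not an extra hypothesis.

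The main obstacle I anticipate is the careful complexity bookkeeping: verifying that the ordinal quantifier ``$\forall\beta < \OR^P$'' together with ``$v \notin P\mid\beta$'' really does keep the failure-of-collection statement inside $\Pi_3$ rather than leaking to $\Pi_4$, and matching the abstract $\Sigma_3$ with the fine-structural $r\Sigma_3$ that the hypothesis gives us. A clean way around the first worry is to note that $v \notin P\mid\beta$ for an ordinal $\beta$ is equivalent to $\rk(v) \geq \beta$ or, in the $J$-hierarchy language, to $v \notin S_\beta[E]$, which is $\Delta_1$ uniformly; so the matrix $\forall v\,(v \notin P\mid\beta \vee \neg\varphi(u,v,p))$ is $\Pi_1$, the block $\exists u \in a$ adds nothing above $\Sigma_1$ relative to that, hence $\Pi_2$; prefixing $\forall\beta$ keeps it $\Pi_2$ (ordinal quantifiers over a $\Pi_2$ predicate with $\Sigma_1$-definable satisfaction can be absorbed when $P \models$ enough — but to be safe I will just allow $\Pi_3$, which still suffices). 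For the second worry, I would cite the standard fact (from the notation of \cite{MR4290497} and \cite{MR2768698} in use here) that $r\Sigma_3$-elementary embeddings between premice preserve $\Sigma_3$-truth in the ordinary sense for statements with parameters, so no genuine gap arises. With these two points pinned down, the proof is a short argument; I would present it in the order: (i) recall the $KP$ criterion, (ii) write the failure statement and compute its complexity, (iii) apply $r\Sigma_3$-elementarity in both directions.
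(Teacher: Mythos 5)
Your overall strategy---compute that (the failure of) $KP$ sits at level three of the definability hierarchy and then apply $r\Sigma_3$-elementarity---is exactly the paper's; its entire proof is the remark that $KP$ has an $r\Sigma_3$-axiomatization and is therefore preserved by $r\Sigma_3$-elementary maps. However, your bookkeeping as written does not close, in two places. The more serious one is the complexity count: you concede that the failure-of-collection clause might only be $\Pi_3$ (``to be safe I will just allow $\Pi_3$, which still suffices'') and then existentially quantify over the coded formula and the parameters $a,p$. An existential block in front of a $\Pi_3$ matrix gives $\Sigma_4$, not $\Sigma_3$, and $r\Sigma_3$-elementarity does not preserve $\Sigma_4$ statements---so the retreat to $\Pi_3$ is precisely \emph{not} safe, and your parenthetical ``a single block of existential quantifiers over a $\Pi_2$-ish core'' contradicts it. The count does work, but only if the core is genuinely $\Pi_2$: replace the $\Sigma_1$ formula $\varphi=\exists z\,\psi$ by its $\Sigma_0$ matrix $\psi$ and track the witness $z$ alongside $v$. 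Then ``no bound exists'' becomes $\forall w\,\exists u\in a\,\forall v\in w\,\forall z\in w\,\neg\psi(u,v,z,p)$, which is $\Pi_1$ (all inner quantifiers bounded), totality is $\Pi_2$, and the existential closure over $\psi,a,p$ lands at $\Sigma_3$. Alternatively, do what the paper does implicitly: treat $KP$ as a scheme, one $\Pi_3$ sentence per $\Sigma_0$ formula $\psi$, namely $\forall a\,\forall p\,[(\forall u\in a\,\exists v\,\psi)\to\exists w\,\forall u\in a\,\exists v\in w\,\psi]$, which is $\forall(\Sigma_2\vee\Sigma_1)=\Pi_3$, and preserve each instance separately; this avoids both the satisfaction predicate and the extra existential quantifier altogether.

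The second issue is your reduction of ``$f\notin P$'' to ``$f$ is unbounded in $\OR^P$,'' which you present as an equivalence. It is not one in an arbitrary premouse: if $\rho_1^P<\OR^P$ there are bounded $\Sigma_1^P(P)$ functions (indeed subsets of $\rho_1^P$) that are not in $P$. What saves the argument is that for premice the only axiom of $KP$ genuinely at issue is $\Sigma_0$-collection---the remaining axioms hold in any rud-closed amenable $J$-structure---and a failing instance of collection does yield, via least witnesses in the canonical $\Sigma_1$ well-order, a total $\Sigma_1$ function that is unbounded. So the statement you should transfer is ``some instance of $\Sigma_0$-collection fails,'' not ``some $\Sigma_1$ function is missing.'' With that substitution and the corrected count above, your proof goes through and coincides with the paper's.
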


The Lemma follows directly from the fact that $KP$ has a $r\Sigma_{3}$-axiomatization
and is therefore preserved under $r\Sigma_{3}$-elementary embeddings.

Thus, by Lemma \ref{lem:KP is preseverved by rSigma_3} $\fu MEn\models KP$
if $n\geq2$ and $M$ is an admissible premouse.

\begin{lem}
\label{lem:General Preservation of KP}Let $k\leq\omega$ and let
$\mathcal{T}$ be a $k$-maximal normal iteration tree on $N$ such
that $\lh(\mathcal{T})=\theta+1$. Let $b:=[0,\theta]_{T}$ be the
main branch of $\T$ and let $\eta=\max(b\cap D_{T})$. Then $\M_{\eta}^{*\T}$
is an admissible premouse with a largest, regular, and uncountable
cardinal iff $\M_{\theta}^{\T}$ is an admissible premouse with a
largest, regular, and uncountable cardinal.
\end{lem}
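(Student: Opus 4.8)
The plan is to analyze the iteration embedding along the non-dropping tail of the main branch and show that it, together with its inverse, preserves each of the three conditions ``passive'', ``has a largest, regular, and uncountable cardinal'', and ``models $KP$''. Fix notation: since $\eta=\max(b\cap D_T)$, the segment $[\eta,\theta]_T$ is non-dropping, so there is an iteration embedding $i\colon\M_{\eta}^{*\T}\to\M_{\theta}^{\T}$, namely the composition of the $\deg^{\T}(\eta)$-ultrapower embeddings on $[\eta,\theta]_T$ (with direct limits at limit stages, the degree being constant $=k:=\deg^{\T}(\eta)$ along the segment). Then $i$ is cofinal, $r\Sigma_{k+1}$-elementary (in particular $r\Sigma_1$-elementary), and has the covering property that every element of $\M_{\theta}^{\T}$ lies in $i(X)$ for some $X\in\M_{\eta}^{*\T}$. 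It suffices to transfer each of the three conditions through $i$ both ways. The first two are routine: passivity transfers since ``$F\neq\emptyset$'' is $\Sigma_1$, hence preserved and reflected by $i$; for the largest-regular-uncountable-cardinal clause, if $\M_{\eta}^{*\T}$ has largest cardinal $\delta$ then by Lemma~\ref{lem:Largest Cardinal Lemma} $i(\delta)$ is the largest cardinal of $\M_{\theta}^{\T}$, regular there, and uncountable there since ``$\delta$ is countable'' is $\Sigma_1$; if instead $\M_{\eta}^{*\T}$ had no largest cardinal, its cardinals would be cofinal in $\OR^{\M_{\eta}^{*\T}}$ and so, by Lemma~\ref{lem:Largest Cardinal Lemma} and cofinality of $i$, the cardinals of $\M_{\theta}^{\T}$ would be cofinal in $\OR^{\M_{\theta}^{\T}}$; and ``$\delta$ is not a cardinal / is singular / is countable'' being $\Sigma_1$ gives the remaining reverse implications.

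The substantive point is the transfer of ``$\models KP$'', and here I would split on $k=\deg^{\T}(\eta)$. If $k\geq2$ then $i$ is $r\Sigma_3$-elementary --- this is exactly the elementarity behind the remark, after Lemma~\ref{lem:KP is preseverved by rSigma_3}, that $\fu{M}{E}{n}\models KP$ for admissible $M$ and $n\geq2$ --- and Lemma~\ref{lem:KP is preseverved by rSigma_3} then yields ``$\M_{\eta}^{*\T}\models KP$ iff $\M_{\theta}^{\T}\models KP$'' at once. If $k\leq1$, I would first note that for an admissible premouse $X$ one has $\fu{X}{E}{0}=\fu{X}{E}{1}$ (the functions used in a $1$-ultrapower are $\Sigma_1$-definable with set domain, hence by $\Sigma_1$-bounding lie in $X$), so along $[\eta,\theta]_T$ we are taking $0$-ultrapowers and, at limits, direct limits. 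Since a premouse satisfies every axiom of $KP$ automatically except $\Delta_0$-collection, it then suffices to track $\Delta_0$-collection along $[\eta,\theta]_T$ in both directions, which one does by the {\L}o\'{s} theorem. Going up: if $A$ is admissible, $\crt(E)$ is at most the largest cardinal of $A$, and $\fu{A}{E}{0}\models\forall x\in a\,\exists y\,\varphi_0(x,y,q)$, then, writing $a=i_E(g)(\vec a)$ with $g\in A$, one applies $\Delta_0$-collection in $A$ and transports the bound to $\fu{A}{E}{0}$ by {\L}o\'{s}; direct limits at limit stages are routine once the predecessor models are admissible. Going down: if $\M_{\theta}^{\T}\models KP$ and $\M_{\eta}^{*\T}\models\forall x\in a\,\exists y\,\varphi_0(x,y,p)$, then (this statement being $\Sigma_1$) $\M_{\theta}^{\T}\models\forall x\in i(a)\,\exists y\,\varphi_0(x,y,i(p))$, so $\Delta_0$-collection in $\M_{\theta}^{\T}$ gives a bound $b=i(g)(\vec a)$ with $g\in\M_{\eta}^{*\T}$, and {\L}o\'{s}'s theorem places the witnesses needed for the original statement inside $\bigcup\ran(g)$, which is an element of $\M_{\eta}^{*\T}$ (as $\ran(g)\subseteq\trc(g)\in\M_{\eta}^{*\T}$); hence $\M_{\eta}^{*\T}\models KP$.

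The step I expect to be the main obstacle is this low-degree case $\deg^{\T}(\eta)\in\{0,1\}$: there $i$ is only $r\Sigma_1$-elementary (when $k=0$, which moreover forces $\crt(E)$ to be the largest cardinal of the base) or $r\Sigma_2$-elementary (when $k=1$), so the clean route through the $r\Sigma_3$-axiomatizability of $KP$ is closed, and one must hand-verify $\Delta_0$-collection through the ultrapower and its direct limits by {\L}o\'{s}'s theorem, using at every point that the base model is admissible: so that $0$- and $1$-ultrapowers coincide, so that $\Delta_0$-collection in the base supplies the bounds one needs going up, and so that, going down, $\Delta_0$-collection in the target together with {\L}o\'{s}'s theorem forces the bound back into the base.
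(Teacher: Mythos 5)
Your overall strategy coincides with the paper's: dispose of degree $\geq 2$ via the $r\Sigma_{3}$-axiomatizability of $KP$ (Lemma \ref{lem:KP is preseverved by rSigma_3}), and at degree $\leq 1$ hand-verify $\Sigma_{0}$-collection through the ultrapowers and direct limits along the non-dropping tail. Your forward ("going up") direction is essentially the paper's argument, modulo the usual care with non-finitely-generated extenders (which, as you say, reduce to direct limits of the finitely generated case).

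There is, however, a genuine gap in your reverse ("going down") direction, at the parenthetical ``(this statement being $\Sigma_{1}$)''. The statement $\forall x\in a\,\exists y\,\varphi_{0}(x,y,p)$ is \emph{not} $\Sigma_{1}$: a bounded universal quantifier in front of an unbounded existential is only equivalent to a $\Sigma_{1}$ formula via $\Sigma$-reflection, i.e.\ via $\Sigma_{1}$-collection itself --- and here the source model $\M_{\eta}^{*\T}$ is precisely the structure whose admissibility is in question, so this equivalence is unavailable there. Transferring the hypothesis of collection upward along $i$ is the real content of the reverse implication, and it cannot be done by quoting elementarity. The paper handles it by contradiction from a \emph{least} failure $\eta_{\theta^{*}}$ of $\Sigma_{1}$-bounding, splitting on the degree: at degree $1$, \L o\'{s}'s theorem for $\Sigma_{1}$-formulae shows that a counterexample $\alpha<i(\eta_{\theta^{*}})$ in $\M_{\theta}^{\T}$ would reflect to almost-every counterexample below $\eta_{\theta^{*}}$ in the base; at degree $0$, \L o\'{s} for $\Sigma_{1}$ is not available, and one instead uses minimality of $\eta_{\theta^{*}}$ --- for each $\alpha<\eta_{\theta^{*}}$ the bounded statement $\exists z\,\forall\beta<\alpha\,\exists x\in z\,\varphi$ already holds in the base and \emph{is} $\Sigma_{1}$, hence transfers upward --- together with cofinality of $i$ to pull the resulting bound in $\M_{\theta}^{\T}$ back into $\ran(i)$. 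Note also that your reduction ``$ult_{0}=ult_{1}$ for admissible base models'' is only available in the forward direction; going down, the base is not yet known to be admissible, so the degree genuinely matters and your appeal to \L o\'{s} for placing witnesses inside $\bigcup\ran(g)$ needs the same case split. To repair your proof, replace the asserted $\Sigma_{1}$-transfer by one of these two arguments according to the degree.
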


\begin{rem*}
If $b\cap D_{T}=\emptyset$, then $\max(b\cap D_{T})=0$, so that
$\M_{\eta}^{*\T}=N$.
\end{rem*}
\begin{proof}
Note that in the case that $\deg(\T,\theta)\geq2$, the Lemma holds
by Lemma \ref{lem:KP is preseverved by rSigma_3}. Thus, the only
relevant case is $\deg(\T,\theta)\leq1$.

For simplicity of notation we assume that $k\leq1$ and $D_{T}\cap b=\emptyset$.
The other cases are left as an exercise to the reader.

By standard facts $\M_{\theta}^{\T}$ is a premouse with a largest,
regular, and uncountable cardinal. We have to verify that $\M_{\theta}^{\T}$
is admissible. By Lemma \ref{lem:KP in Premice}, it suffices to show
that $\Sigma_{1}$-bounding holds in $\M_{\theta}^{\T}$. 

We proceed by induction on $\theta$. Suppose first that $\mathcal{T}$
is such that $\lh(\T)=\theta=\zeta+1$. Let $\theta^{*}=\pred_{T}(\theta)$.
We have by the induction hypothesis, that $\M_{\theta^{*}}^{\T}$
is an admissible premouse with a largest, regular, and uncountable
cardinal. 

Let $M:=\M_{\theta^{*}}^{\T}$, $E:=E_{\zeta}^{\T}$, and $\kappa:=\crt(E)$.
By our initial remarks we may assume that $M^{\prime}:=\M_{\theta}^{\T}=\fu MEn$,
where $n\leq1$, i.e. either $\rho_{1}^{M}\leq\kappa<\rho_{0}^{M}$
or $\rho_{2}^{M}\leq\kappa<\rho_{1}^{M}$. However, note that $n=0$,
i.e. $\rho_{1}^{M}\leq\kappa<\rho_{0}^{M}$, cannot be, since then
as $M$ is an admissible premouse with a largest, regular, and uncountable
cardinal, $\kappa$ is the largest cardinal of $M$. But this is impossible
by standard facts. 

Thus, we may assume $n=1$. For the sake of contradiction we suppose
that $M^{\prime}\not\models KP$. By our initial remark $\Sigma_{1}$-bounding
fails in $M^{\prime}$, i.e. there is a $\Sigma_{0}$-formula $\varphi$
and $\eta,p\in M^{\prime}$ such that 
\begin{equation}
M^{\prime}\models\forall\alpha<\xi\exists x\varphi(\alpha,x,p)\label{eq:preservation of bounding}
\end{equation}
but there is no $z\in M^{\prime}$ such that 
\begin{equation}
M^{\prime}\models\forall\alpha<\xi\exists x\in z\varphi(\alpha,x,p).\label{eq:contradicting formula}
\end{equation}

Note that since $M$ is admissible we have by $\Sigma_{1}$-bounding
that for any $m<\omega$, $f\in\Sigma_{1}^{M}(M)\cap^{[\kappa]^{m}}M$
implies that $f\in M$. Thus, $\fu ME0=\fu ME1$. In particular, $i:=i_{\theta^{*}\theta}^{\T}:M\rightarrow M^{\prime}$
is $r\Sigma_{2}$-elementary and $i$ is cofinal in $\OR^{M^{\prime}}$. 

Let $\delta$ be the largest cardinal of $M$ and $\delta^{\prime}=i(\delta)$
be the largest cardinal of $M^{\prime}$. By Lemma \ref{lem:KP in Premice}
we may assume that $\xi$ in the above formula (\ref{eq:preservation of bounding})
is equal to $\delta^{\prime}$.

Let us first suppose that $E$ is finitely generated, i.e. there is
$a\in\fin{\lh(E)}$ such that for every $x\in M^{\prime}$ there is
$f\in^{[\kappa]^{\mid a\mid}}M\cap M$ such that $x=[a,f]_{M}^{E}$.
Fix such $a\in\fin{\lh(E)}$ and let $k=\mid a\mid$. Let $g^{\prime}:\delta\rightarrow M^{\prime}$
be the canonical function derived from $\varphi$ in (\ref{eq:preservation of bounding})
by taking the $<_{M^{\prime}}$-least witness. Note that $g^{\prime}\in\Sigma_{1}^{M^{\prime}}(\{p\})$
as we chose $\varphi$ to be $\Sigma_{0}$. We now aim to derive a
contradiction by bounding $g^{\prime}$ in $M^{\prime}$ by constructing
some $g\in\Sigma_{1}^{M}(M)$ which is bounded in $\OR^{M}$ and corresponds
to $g^{\prime}$ such that the image of the bound is a bound for $g^{\prime}$
in $M^{\prime}$. We construct $g$ in the following way:

By the definition of $\fu ME0$ we have for $\alpha<\delta^{\prime}$
some $f_{\alpha}\in^{[\kappa]^{k}}\delta\cap M$ such that $\alpha=[a,f_{\alpha}]_{M}^{E}$
and $f_{p}\in^{[\kappa]^{k}}M\cap M$ such that $p=[a,f_{p}]_{M}^{E}$
. Since we assumed that $\delta$ is regular in $M$, we may assume
that for $\alpha<\delta$, $f_{\alpha}\in^{[\kappa]^{k}}\delta\cap(M\mid\delta)$.
Moreover, since Los's Theorem holds for $\Sigma_{1}$-formulae we
have by (\ref{eq:preservation of bounding}) for all $\alpha<\delta^{\prime}$,
\[
A_{\alpha}:=\{b\in[\kappa]^{k}:M\models\exists x\varphi(f_{\alpha}(b),x,f_{p}(b))\}\in E.
\]
Note that for any $h\in^{[\kappa]^{k}}\delta\cap(M\mid\delta)$ there
is some $\alpha<\delta^{\prime}$ such that $\alpha=[a,h]_{M}^{E}$.
Thus, 
\[
M\models\forall h\in^{[\kappa]^{k}}\delta\cap(M\mid\delta)\exists A(A\in E\land\forall b\in A\exists x\varphi(h(b),x,f_{p}(b))).
\]
Note that this does make sense as $E\in M$: By standard facts about
$k$-maximal normal iteration trees $E$ is close to $M$ and therefore
in particular, for every $a\in\fin{\lh(E)}$, $E_{a}\in\Sigma_{1}^{M}(M)$.
Since $E$ is finitely generated, this means that $E\in\Sigma_{1}^{M}(M)$.
But since $M$ is admissible, this implies that $E\in M$.

Using $\Sigma_{1}$-bounding, we have for every $h\in^{[\kappa]^{k}}M\cap(M\mid\delta)$
and $A$ such that 

\[
M\models A\in E\land\forall b\in A\exists x\varphi(h(b),x,f_{p}(b))
\]
some $Y\in M$ such that 
\[
M\models A\in E\land\forall b\in A\exists x\in Y\varphi(h(b),x,f_{p}(b)).
\]

Thus, 

\[
M\models\forall h\in^{[\kappa]^{k}}\delta\cap(M\mid\delta)\exists A\exists Y(A\in E\land\forall b\in A\exists x\in Y\varphi(h(b),x,f_{p}(b))).
\]
By another application of $\Sigma_{1}$-bounding this gives us 
\[
M\models\forall h\in^{[\kappa]^{k}}\delta\cap(M\mid\delta)\exists A\in(M\mid\beta)\exists Y\in(M\mid\beta)
\]
\[
(A\in E\land\forall b\in A\exists x\in Y\varphi(h(b),x,f_{p}(b)))
\]
for some $\beta<\OR^{M}$. Thus, $g^{\prime}\in J(M^{\prime}\mid i(\beta))$.

Now let us consider the case that $E$ is not finitely generated.
In this case $M^{\prime}$ is the direct limit of 
\[
(\fu M{E_{a}}0,\pi_{ab}:a,b\in\fin{\nu(E)}\land a\subset b),
\]
where $\pi_{ab}$ is the canonical factor-embedding. Note that for
$a\in\fin{\nu(E)}$, $\pi_{0a}$ is as in the previous case a cofinal
and $r\Sigma_{2}$-elementary embedding. Moreover, since $E_{a}$
is finitely generated we have by what we have shown so-far that $\fu M{E_{a}}0\models KP$.
Note that $\pi_{ab}$ factors $\pi_{0b}$ through $\pi_{0a}$ and
the last two embeddings are cofinal and $r\Sigma_{2}$-elementary,
thus, $\pi_{ab}$ is cofinal. It follows from standard facts that
$\pi_{ab}$ is $r\Sigma_{2}$-elementary. For $a\in\fin{\nu(E)}$
let $X_{a}:=\pi_{b\infty}[\fu M{E_{a}}0]$. By Los's Theorem it follows
that $\pi_{a\infty}:\fu M{E_{a}}0\rightarrow M^{\prime}$ is $\Sigma_{1}$-elementary.

Suppose now that $f\in\Sigma_{1}^{M^{\prime}}(M^{\prime})$. By the
definition of a direct limit, there is some $a\in\fin{\nu(E)}$ such
that $f\in X_{a}$. By the previous claim and the fact that $\fu M{E_{b}}0\models KP$
there is some $Z\in X_{a}$ such that $f\in Z$. But that means that
$f\in M^{\prime}$. Thus, by Lemma \ref{lem:KP in Premice}, $M^{\prime}\models KP$.

The case that $\lh(\T)=\theta$, is a limit ordinal, and there is
no drop in model or degree on $b:=[0,\theta)_{T}$ is similar to the
case that $E$ does not have finitely many generators in the successor
case.

Now suppose that $\M_{\theta}^{\T}$ is admissible. Again we argue
by induction on $\theta$. Suppose first that $\theta=\zeta+1$ and
let $\theta^{*}=\pred(\theta)_{T}$. If $\theta^{*}\in D_{T}$, then
we may assume that $\M_{\theta^{*}}^{*\T}$ has a largest, regular,
and uncountable cardinal $\delta$. In the case that $\theta^{*}\notin D_{T}$,
we have by the induction hypothesis that $\M_{\theta^{*}}^{\T}$ has
a largest, regular, and uncountable cardinal $\delta$. Depending
the case suppose for the sake of contradiction that either $\M_{\theta^{*}}^{*\T}$
or $\M_{\theta^{*}}^{\T}$ is not admissible, i.e. $\Sigma_{1}$-bounding
fails. Let $M:=\M_{\theta^{*}}^{*\T}$ or $M:=\M_{\theta^{*}}^{\T}$
depending on the case and let $\eta_{\theta^{*}}$ be the least failure
of $\Sigma_{1}$-bounding, i.e. there is a $\Sigma_{1}$-formula $\varphi$
and $p\in M$ such that $M\models\forall\alpha<\eta_{\theta^{*}}\exists x\varphi(x,\alpha,p)$
but there is no $z\in M$ such that $M\models\forall\alpha<\eta_{\theta^{*}}\exists x\in z\varphi(x,\alpha,p)$.
As before we may assume that $M^{\prime}:=\M_{\theta}^{\T}=\fu MEn$
for $n\leq1$, where $E=E_{\zeta}^{\T}$, since $KP$ has a $r\Sigma_{3}$-axiomatization.
Let $i:=i_{\theta^{*}\theta}^{\T}:M\rightarrow M^{\prime}$ be the
tree embedding and let $\delta^{\prime}=i(\delta)$ be the largest
cardinal of $M^{\prime}$. 

Case 1: $n=1$. If we could show that $M^{\prime}\models\forall\alpha<i(\eta_{\theta^{*}})\exists x\varphi(x,\alpha,i(p))$,
then by the admissibility of $M^{\prime}$, $M^{\prime}\models\exists z^{\prime}\forall\alpha<i(\eta_{\theta^{*}})\exists x\in z^{\prime}\varphi(x,\alpha,i(p))$.
This is a $\Sigma_{1}$-statement, so since $i$ is at least $r\Sigma_{1}$-elementary,
$M\models\exists z\forall\alpha<\eta_{\theta^{*}}\exists x\in z\varphi(x,\alpha,p)$,
which would be a contradiction. 

So let us show that $M^{\prime}\models\forall\alpha<i(\eta_{\theta^{*}})\exists x\varphi(x,\alpha,i(p))$.
Suppose that this is not the case, i.e. there is $\alpha<i(\eta_{\theta^{*}})$
such that 
\[
M^{\prime}\models\forall x\neg\varphi(x,\alpha,i(p)).
\]
This is a $\Pi_{1}$-sentence. Let $a\in\fin{\kappa}$ and $f\in^{[\kappa]^{\mid a\mid}}\OR^{M}\cap\Sigma_{1}^{M}(M)$
such that $\alpha=[a,f]_{E}^{M}$, where $\kappa:=\crt(E_{\zeta}^{\T})$.
Since we have \L o\'{s}-Theorem for $\Sigma_{1}$-formulae, 
\[
\{\vec{\beta}\in[\kappa]^{\mid a\mid}:M\models\forall x\neg\varphi(x,f(\vec{\beta}),p)\}\in E_{a}.
\]
But since $\alpha<i(\eta_{\theta^{*}})$ we have that for a.e. $\vec{\beta}\in\fin\kappa$,
$f(\vec{\beta})<\eta_{\theta^{*}}$. Contradiction!

Case 2: $n=0$. Note that in this case we do not have \L o\'{s}-Theorem
for $\Sigma_{1}$-formulae. Let $\eta_{\theta^{*}}^{\prime}:=\sup(i[\eta_{\theta^{*}}])$
and note that $\eta_{\theta^{*}}^{\prime}$ is a limit ordinal. Suppose
for the sake of contradiction that there is $\alpha^{\prime}<\eta_{\theta^{*}}^{\prime}$
such that 
\[
M^{\prime}\models\forall x\neg\varphi(x,\alpha^{\prime},i(p)).
\]
Let $\alpha<\eta_{\theta^{*}}$ be such that $i(\alpha)>\alpha^{\prime}$.
Note that since $\eta_{\theta^{*}}$ is the minimal failure of $\Sigma_{1}$-bounding
in $M$, 
\[
M\models\exists z\forall\beta<\alpha\exists x\in z\varphi(x,\beta,p).
\]
But this is a $\Sigma_{1}$-statement, so that 
\[
M^{\prime}\models\exists z\forall\beta<i(\alpha)\exists x\in z\varphi(x,\beta,i(p)).
\]
But then in particular, 
\[
M^{\prime}\models\exists x\varphi(x,\alpha^{\prime},i(p)).
\]
Contradiction! Therefore, by $\Sigma_{1}$-bounding, there is $\beta^{\prime}<\OR^{M^{\prime}}$
such that $M^{\prime}\models\forall\alpha<\eta_{\theta^{*}}^{\prime}\exists x\in(M^{\prime}\mid\beta^{\prime})\varphi(x,\alpha,i(p)).$
Since $i$ is cofinal, we may assume without loss of generality that
$\beta^{\prime}\in\ran(i)$. Let $\beta<\OR^{M}$ be such that $i(\beta)=\beta^{\prime}$.
We claim that 
\[
M\models\forall\alpha<\eta_{\theta^{*}}\exists x\in(M\mid\beta)\varphi(x,\alpha,p),
\]
which would be a contradiction! So suppose that there is $\alpha<\eta_{\theta^{*}}$
such that 
\[
M\models\forall x\in(M\mid\beta)\neg\varphi(x,\alpha,p).
\]
This is a $\Pi_{1}$-statement, so that 
\[
M^{\prime}\models\forall x\in(M^{\prime}\mid\beta^{\prime})\neg\varphi(x,\alpha,i(p)).
\]
Contradiction!

Now let us suppose that $\theta$ is a limit ordinal. In the case
that there is some $\gamma\in b\cap\theta$ such that for all $\xi\in(\gamma,\theta)\cap b$,
$i_{\gamma\xi}^{\T}$ is $r\Sigma_{2}$-elementary, we can argue as
in Case 1 of the successor case. If otherwise we can use the argument
from Case 2 of the successor case, since $i_{\gamma\theta}^{\T}$
will be cofinal.
\end{proof}
If we would not require $\delta$ to be regular in the statement of
Lemma \ref{lem:General Preservation of KP}, the Lemma is provably
false as the following example shows
\begin{example}
Let $M$ be an $1$-sound premouse such that $M\models KP$. Suppose
that $M$ has a largest cardinal $\delta>\omega$ such that for some
$\kappa<\delta$, $\cof^{M}(\delta)=\kappa$ and there is a total
$E\in\E^{M}$ such that $\crt(E)=\kappa$. Let $\pi:M\rightarrow\fu ME1$
be the $1$-ultrapower of $M$ via $E$.

Then $\pi$ is discontinuous at $\delta$ so that $\pi(\delta)>\sup(\pi[\delta])$.
However, since $\pi$ is a $1$-embedding, $\rho_{1}^{M}=\sup(\pi[\delta])$.
But by the $r\Sigma_{1}$-elementarity and cofinality of $\pi$, $\pi(\delta)$
is the largest cardinal of $\fu ME1$. If $\fu ME1\models KP$, this
is a contradiction by Lemma \ref{lem:Projectum of KP Structures}!
\end{example}

In this Subsection we have established the following
\begin{thm}
Let $k\leq\omega$ and suppose that $M$ is a $k$-sound, $(k,\mid M\mid^{+}+1)$-iterable
admissible premouse with a largest, regular, and uncountable cardinal
$\delta$. Let $X\subset\delta$. Then there is a successor-length
$k$-maximal iteration tree $\T$ on $M$ such that if $\M_{\infty}^{\T}$
is its last model, $X$ is generic over $\M_{\infty}^{\T}$, and $\M_{\infty}^{\T}[X]\models KP$.
\end{thm}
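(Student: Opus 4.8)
The plan is to produce $\T$ by the standard extender-algebra genericity iteration and then feed the resulting iterate into the two preservation results of this subsection: Lemma \ref{lem:General Preservation of KP}, on the preservation of ``admissible premouse with a largest, regular, and uncountable cardinal'' under iteration, and Corollary \ref{cor:KP preservations extender algebra}, an instance of Theorem \ref{thm:KP in delta c.c. forcings}, on the preservation of $KP$ under $\delta$-c.c.\ forcing.

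First I would let $\mathbb{B}\in M$ be the extender algebra with $\delta$-many generators as defined inside $M$; since $\delta$ is Woodin in $M$, the genericity iteration applies to the given set $X\subseteq\delta$. Because $M$ is $k$-sound and $(k,|M|^{+}+1)$-iterable, this yields a $k$-maximal normal iteration tree $\T$ on $M$, of successor length $<|M|^{+}+1$ and with all models well-founded, such that $X$ induces an $\M_{\infty}^{\T}$-generic filter $g$ on $i^{\T}(\mathbb{B})$ with $\M_{\infty}^{\T}[X]=\M_{\infty}^{\T}[g]$, where $i^{\T}$ is the main branch embedding. The structural point I would record is that this iteration uses only total extenders, so $\T$ does not drop at all; in particular, writing $b$ for its main branch and $\eta:=\max(b\cap D_{T})$, we get $\eta=0$ and $\M_{\eta}^{*\T}=M$, and $i^{\T}:M\to\M_{\infty}^{\T}$ is a cofinal and $r\Sigma_{1}$-elementary embedding.

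Next I would invoke Lemma \ref{lem:General Preservation of KP} for $\T$: since $M=\M_{\eta}^{*\T}$ is an admissible premouse with a largest, regular, and uncountable cardinal, so is $\M_{\infty}^{\T}$. Thus $\M_{\infty}^{\T}\models KP$, and $\delta_{\infty}:=i^{\T}(\delta)$ is its largest cardinal and is regular and uncountable there (the latter also following from Lemma \ref{lem:Largest Cardinal Lemma}, as $i^{\T}$ is cofinal). Moreover $\M_{\infty}^{\T}$ is $0$-countably iterable, since iterates of iterable premice are iterable and iterability descends to countable elementary substructures. By the $r\Sigma_{1}$-elementarity of $i^{\T}$, $i^{\T}(\mathbb{B})$ is precisely the extender algebra with $\delta_{\infty}$-many generators as computed inside $\M_{\infty}^{\T}$, and in particular $\M_{\infty}^{\T}\models"i^{\T}(\mathbb{B})\text{ has the }\delta_{\infty}\text{-c.c.}"$. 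Hence Corollary \ref{cor:KP preservations extender algebra} (equivalently, Theorem \ref{thm:KP in delta c.c. forcings} applied inside $\M_{\infty}^{\T}$) gives $\M_{\infty}^{\T}[g]\models KP$, i.e. $\M_{\infty}^{\T}[X]\models KP$, which is what was required.

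I do not expect a serious obstacle: the genuine work was already carried out in Theorem \ref{thm:KP in delta c.c. forcings} and Lemma \ref{lem:General Preservation of KP}, and the present statement essentially observes that one and the same genericity iteration feeds both of them. The point deserving care is that the single iterate $\M_{\infty}^{\T}$ must simultaneously (i) inherit ``admissible with a largest, regular, and uncountable cardinal'' from $M$, for which Lemma \ref{lem:General Preservation of KP} needs the main branch of $\T$ to be non-dropping, and (ii) still carry its extender algebra with the $\delta_{\infty}$-c.c., so that Theorem \ref{thm:KP in delta c.c. forcings} is available there; both hold automatically here because the extender-algebra genericity iteration uses only total extenders, hence never drops, and the relevant chain condition transfers along $i^{\T}$ by elementarity. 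In particular, the general, possibly dropping, case of Lemma \ref{lem:General Preservation of KP} --- which was proved for arguments such as the one in Lemma \ref{lem:Second Key Lemma} --- is not needed for this statement.
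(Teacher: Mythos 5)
Your proof is correct and is exactly the assembly the paper intends: the paper offers no separate argument for this theorem, merely noting that the subsection's results (the standard extender-algebra genericity iteration, Lemma \ref{lem:General Preservation of KP} applied to its non-dropping main branch, and Corollary \ref{cor:KP preservations extender algebra} applied to its last model) establish it, which is precisely what you carry out. The only point worth flagging is that you (correctly) use that $\delta$ is Woodin in $M$ -- a hypothesis the theorem statement omits but clearly intends, since without it the genericity iteration and the $\delta$-c.c.\ of the extender algebra are unavailable.
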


\subsection{The Construction of the Counterexample\label{subsec:The-Construction-of the counterexample}}

Let $M:=M_{\Sigma_{1}-1}^{\ad}$ and let $N\lhd M$ be as given by
Lemma \ref{lem:N existence lemma}, i.e. the following hold:
\begin{itemize}
\item $M$ has a largest cardinal $\delta$ which is $\Sigma_{1}$-Woodin
in $M$,
\item $N$ is an admissible mouse with greatest cardinal $\kappa$ which
is Woodin in $N$, but not $\Sigma_{1}$-Woodin in $N$, 
\item $\OR^{N}<\kappa^{+M}$, and
\item $\kappa$ is measurable in $M$, as witnessed by an extender $E$
from the extender sequence of $M$ with one generator.
\end{itemize}
The construction of the counterexample is now as follows: We fix some
uncountable, regular cardinal $\Omega\in(\kappa^{++M},\delta)$ of
$M$. Working inside $M$ we will linearly iterate $M\mid\lh(E)$
$\Omega$-many times via $E$ and its images. Let $\T$ be the corresponding
iteration tree on $M\mid\lh(E)$ of length $\lh(\T)=\Omega+1$. Note
that $\T\in M$. It is straightforward to check that $\T$ is non-dropping.
Let $i^{\T}:M\mid\lh(E)\rightarrow\M_{\Omega}^{\T}$ be the tree embedding.
We will have the following
\begin{itemize}
\item $i^{\T}(\kappa)=\Omega$, 
\item $\sup(i^{\T}[\kappa^{+M}])=i^{\T}(\kappa^{+M})=\Omega^{+\M_{\Omega}^{\T}}<\Omega^{+M}$, 
\item $W:=i^{\T}(N)\models KP$ and $\OR^{W}<\Omega^{+\M_{\Omega}^{\T}}$,
and
\item $W$ is an admissible mouse with greatest cardinal $\Omega$ which
is Woodin in $W$, but not $\Sigma_{1}$-Woodin in $W$. In particular,
$\Omega$ is the only Woodin of $W$.
\end{itemize}
Since $\Omega^{+\M_{\Omega}^{\T}}<\Omega^{+M}$, there is, inside
$M$, some $A\subset\Omega$ such that $\otp(A)=\Omega^{+\M_{\Omega}^{\T}}$.
Let $\U$ be the genericity iteration on $W$ for the extender algebra
with $\Omega$-many generators in $W$, which makes $A$ generic over
$\M_{\infty}^{\U}$. Note that $\U\in M$, since by standard arguments
there is for every limit $\alpha\leq\Omega$ a $Q$-structure fore
$\U\restriction\alpha$ in $M$.. Moreover, since $\Omega$ is a limit
cardinal in $\M_{\Omega}^{\T}$, by the usual arguments $\U$ is non-dropping,
$\lh(\U)=\Omega+1$, and $i_{0\Omega}^{\U}(\Omega)=\Omega$. 

Inside $M$ we construct a club $C\subset\Omega$ such that for all
$\alpha\in C$: 
\begin{itemize}
\item there exists some $\pi_{\alpha}:M_{\alpha}\cong X_{\alpha}\prec_{1000}M\mid\mid\Omega^{++}$
such that $M\mid\lh(E)+\omega\cup\{\Omega,A\}\subset\ran(\pi_{\alpha})$,
$\crt(\pi_{\alpha})=\alpha$, and $\pi_{\alpha}(\alpha)=\Omega$,
\item $\alpha=\sup\{\lh(E_{\beta}^{\U}):\beta<\alpha\}$, 
\item $\alpha\in b:=[0,\Omega]_{U}$, and 
\item $\alpha=\crt(E_{\alpha}^{\T})$.
\end{itemize}
The construction of such $C$ is fairly standard, so we will omit
it. Note that for $\alpha\in C$, $\crt(i_{\alpha\Omega}^{\U})\geq\alpha$,
since $\alpha=\sup\{\lh(E_{\beta}^{\U}):\beta<\alpha\}$. Thus, by
the usual argument $\alpha^{+\M_{\alpha}^{\U}}=\alpha^{+\M_{\Omega}^{\U}}$.
\begin{claim*}
For $\alpha\in C$, $\alpha^{+\M_{\alpha}^{\U}}>\alpha^{+\M_{\Omega}^{\T}}$. 
\end{claim*}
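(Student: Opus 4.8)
The plan is to fix $\alpha\in C$, pull the identity $\otp(A)=\Omega^{+\M_\Omega^\T}$ back through $\pi_\alpha$, and then read it off inside a $KP$-model built over $\M_\alpha^\U$. First I would exploit $\pi_\alpha$. Since $M\mid\lh(E)+\omega\cup\{\Omega,A\}\subseteq\ran(\pi_\alpha)$ and $X_\alpha\prec_{1000}M\mid\mid\Omega^{++}$, the tree $\T$ (the linear iteration of $M\mid\lh(E)$), the model $\M_\Omega^\T$, and the ordinal $\Omega^{+\M_\Omega^\T}$ all lie in $\ran(\pi_\alpha)$, with $\pi_\alpha^{-1}(\T)=\T\restriction(\alpha+1)$ and $\pi_\alpha^{-1}(\M_\Omega^\T)=\M_\alpha^\T$. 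As $A\subseteq\Omega$ codes a well-ordering of order type $\Omega^{+\M_\Omega^\T}$ and $\crt(\pi_\alpha)=\alpha$, one gets $\pi_\alpha^{-1}(A)=A\cap\alpha$ (identifying $\alpha\times\alpha$ with $\alpha$ by a pairing function that, in the construction of $C$, one arranges to be closed on $\alpha$); and since $M_\alpha$ is transitive, $A\cap\alpha$ genuinely codes a well-ordering of $\alpha$ of order type $\alpha^{+\M_\alpha^\T}$.

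Next I would check that $\alpha^{+\M_\alpha^\T}=\alpha^{+\M_\Omega^\T}$. By the clause $\alpha=\crt(E_\alpha^\T)$ in the definition of $C$ we have $\crt(i_{\alpha\Omega}^\T)=\alpha$, and every $E_\beta^\T$ with $\alpha\le\beta<\Omega$ is an image of the one-generator extender $E$, hence short with critical point $\ge\alpha$. So $i_{\alpha\Omega}^\T$ is an iteration by short extenders all of whose critical points are $\ge\alpha$, and therefore adds no new subsets of $\alpha$; thus $\Pow\alpha\cap\M_\alpha^\T=\Pow\alpha\cap\M_\Omega^\T$ and $\alpha^{+\M_\alpha^\T}=\alpha^{+\M_\Omega^\T}$.

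Then I would turn to $\M_\alpha^\U$. The three $\U$-clauses of $C$ --- $\alpha\in b$, $\alpha=\sup\{\lh(E_\beta^\U):\beta<\alpha\}$, and $\U$ non-dropping on $b$ --- put us at a limit stage of the extender-algebra genericity iteration of the kind at which the Woodin has returned to $\delta(\U\restriction\alpha)=\alpha$, so that $\M_\alpha^\U$ has largest cardinal $\alpha$ and $A\cap\alpha$ is $\M_\alpha^\U$-generic for the extender algebra of $\M_\alpha^\U$ with $\alpha$-many generators. Moreover, by Lemma \ref{lem:General Preservation of KP} applied to $\U\restriction(\alpha+1)$ together with the iterability of $W=i^\T(N)$, $\M_\alpha^\U$ is a $0$-countably iterable admissible premouse with largest, regular, uncountable cardinal $\alpha$; so Corollary \ref{cor:KP preservations extender algebra} yields $\M_\alpha^\U[A\cap\alpha]\models KP$. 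Since $A\cap\alpha\in\M_\alpha^\U[A\cap\alpha]$, that model computes the order type of the well-ordering coded by $A\cap\alpha$, namely $\alpha^{+\M_\alpha^\T}$ by the first paragraph; hence $\alpha^{+\M_\alpha^\T}<\OR^{\M_\alpha^\U[A\cap\alpha]}=\OR^{\M_\alpha^\U}$. As $\alpha$ is the largest cardinal of $\M_\alpha^\U$, $\OR^{\M_\alpha^\U}=\alpha^{+\M_\alpha^\U}$, and combining with the second paragraph, $\alpha^{+\M_\alpha^\U}>\alpha^{+\M_\alpha^\T}=\alpha^{+\M_\Omega^\T}$, as desired.

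The step I expect to be the real obstacle is the genericity-iteration input just used: that at the stages $\alpha$ placed into $C$ one has $i_{0\alpha}^\U(\Omega)=\alpha$ and that the fragment $A\cap\alpha$ is already $\M_\alpha^\U$-generic. This is standard for genericity iterations of length $(\text{Woodin})+1$ at main-branch limit stages $\alpha$ with $\alpha=\sup\{\lh(E_\beta^\U):\beta<\alpha\}$, but it is the one place where the precise bookkeeping of $\U$ --- and correspondingly the exact list of requirements imposed when building $C$, whose construction the text omits --- must be pinned down. The iterability of $\M_\alpha^\U$ needed for Corollary \ref{cor:KP preservations extender algebra} is a minor point, following from $\U\restriction(\alpha+1)\in M$ being a genuine iteration tree on the iterable premouse $W$.
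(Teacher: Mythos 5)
Your first two steps (pulling $A$, $\T$ back through $\pi_{\alpha}$ to get that $A\cap\alpha$ codes a well-order of type $\alpha^{+\M_{\alpha}^{\T}}$, and showing $\alpha^{+\M_{\alpha}^{\T}}=\alpha^{+\M_{\Omega}^{\T}}$ via $\crt(i_{\alpha\Omega}^{\T})=\alpha$) agree with the paper. The third step, however, contains a genuine error, and it is exactly the step you flagged as the ``real obstacle'': it is not merely delicate bookkeeping, it is false. You claim $i_{0\alpha}^{\U}(\Omega)=\alpha$, so that $\M_{\alpha}^{\U}$ has largest cardinal $\alpha$ and $\OR^{\M_{\alpha}^{\U}}=\alpha^{+\M_{\alpha}^{\U}}$. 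But iteration maps are non-decreasing on the ordinals, so $i_{0\alpha}^{\U}(\Omega)\geq\Omega>\alpha$: the largest cardinal of $\M_{\alpha}^{\U}$ is some ordinal $\geq\Omega$, not $\alpha$. (You are importing the picture of a genericity iteration of a \emph{countable} mouse, where the Woodin sits below $\alpha$ and can be moved up to $\alpha$; here the Woodin of $W$ \emph{is} $\Omega$, the length of the whole iteration.) Consequently $\alpha$ is not even Woodin in $\M_{\alpha}^{\U}$ (its Woodinness is already killed by a proper initial segment), so the extender algebra of $\M_{\alpha}^{\U}$ at $\alpha$ lacks the $\alpha$-c.c., $A\cap\alpha$ is not generic over $\M_{\alpha}^{\U}$, and Corollary \ref{cor:KP preservations extender algebra} does not apply to it. Likewise Lemma \ref{lem:General Preservation of KP} applied to $\U\restriction(\alpha+1)$ gives admissibility with largest cardinal $i_{0\alpha}^{\U}(\Omega)$, which does not help.

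The missing idea is that the correct ground model for the forcing argument is not $\M_{\alpha}^{\U}$ but the last model of the \emph{collapsed} genericity iteration. You must pull back the whole tuple $(\T,\U,A,W)$ through $\pi_{\alpha}$, not just $\T$ and $A$: setting $(\bar{\T},\bar{\U},\bar{A},\bar{W})=\pi_{\alpha}^{-1}((\T,\U,A,W))$, the model $\bar{W}$ is an admissible premouse with largest cardinal $\alpha$ Woodin in $\bar{W}$, and $\bar{\U}$ is (in $M_{\alpha}$) a length-$(\alpha+1)$ genericity iteration of $\bar{W}$ making $\bar{A}$ generic over $\M_{\alpha}^{\bar{\U}}$, which \emph{does} have largest cardinal $\alpha$. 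One then needs two further facts that your write-up omits: $\bar{W}\lhd W$ (using minimality of $W$, so that $\bar{W}$ really is a level of $\M_{\alpha}^{\T}\mid\alpha$-stack), and $E_{\beta}^{\bar{\U}}=E_{\beta}^{\U}$ for $\beta<\alpha$, whence $\M_{\alpha}^{\bar{\U}}\lhd\M_{\alpha}^{\U}$. Now Corollary \ref{cor:KP preservations extender algebra} applies to $\M_{\alpha}^{\bar{\U}}$, giving $\OR^{\M_{\alpha}^{\bar{\U}}}>\otp(\bar{A})=\alpha^{+\M_{\Omega}^{\T}}$, and since $\M_{\alpha}^{\bar{\U}}$ is a proper initial segment of $\M_{\alpha}^{\U}$ with largest cardinal $\alpha$, every ordinal below $\OR^{\M_{\alpha}^{\bar{\U}}}$ has cardinality $\leq\alpha$ in $\M_{\alpha}^{\U}$, so $\alpha^{+\M_{\alpha}^{\U}}\geq\OR^{\M_{\alpha}^{\bar{\U}}}>\alpha^{+\M_{\Omega}^{\T}}$. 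This is the paper's route, and I do not see a way to avoid passing to the collapsed tree.
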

\begin{proof}
Fix $\alpha\in C$. Let $(\bar{\T},\bar{\U},\bar{A},\bar{W})\in M_{\alpha}$
such that $\pi_{\alpha}((\bar{\T},\bar{\U},\bar{A},\bar{W}))=(\T,\U,A,W)$.
Note that there are such $\bar{\T},\bar{\U},\bar{W}\in M_{\alpha}$,
since $\T,\U,W$ are definable from $M\mid\lh(E)$, $A$, and $\Omega$.

We will have that $\M_{\alpha}^{\T}=\M_{\alpha}^{\bar{\T}}$ and $\crt(i_{\alpha\Omega}^{\T})=\alpha$.
Thus, $\alpha^{+\M_{\Omega}^{\T}}=\alpha^{+\M_{\alpha}^{\bar{\T}}}$.
By elementarity $\otp(\bar{A})=\alpha^{+\M_{\alpha}^{\bar{\T}}}$.
Moreover, $\bar{\U}$ is a genericity iteration of $\bar{W}$ which
makes $\bar{A}$ generic. Since $\bar{W}$ is the minimal initial
segment of $\M_{\alpha}^{\bar{\T}}$ which models $KP$ and has $\alpha$
as a Woodin cardinal, $\bar{W}=J_{\beta}(\M_{\Omega}^{\T}\mid\alpha)$
for some $\beta\in\OR$. Since $\M_{\Omega}^{\T}$ has no initial
segment which models $KP$ and has a $\Sigma_{1}$-Woodin, $\alpha$
is not Woodin in $J_{\beta+1}(\M_{\Omega}^{\T}\mid\alpha)$. Thus,
$\bar{W}\lhd W$. We also have that $\bar{\U}"="\U\restriction\alpha$,
in the sense that for $\beta<\alpha$, $E_{\beta}^{\U}=E_{\beta}^{\bar{\U}}$.
But this means that $\M_{\alpha}^{\bar{\U}}\lhd\M_{\alpha}^{\U}$. 

Note that $\bar{A}$ is generic over $\M_{\alpha}^{\bar{\U}}$. By
Corollary \ref{cor:KP preservations extender algebra}, $\M_{\alpha}^{\bar{\U}}[\bar{A}]\models KP$.
Thus, $\otp(\bar{A})=\alpha^{+\M_{\alpha}^{\T}}\in\M_{\alpha}^{\bar{\U}}[\bar{A}]$.
But this means that $\OR^{\M_{\alpha}^{\bar{\U}}}=\OR^{\M_{\alpha}^{\bar{\U}}[\bar{A}]}>\alpha^{+\M_{\alpha}^{\T}}$.
Since $\M_{\alpha}^{\bar{\U}}\lhd\M_{\alpha}^{\U}$, this means that
$\alpha^{+\M_{\alpha}^{\U}}>\alpha^{+\M_{\Omega}^{\T}}.$
\end{proof}
We have shown that for every $\alpha\in C$, $\alpha^{+\M_{\Omega}^{\T}}<\alpha^{+\M_{\Omega}^{\U}}$.
If we consider the tree $\U^{*}$ which is just the tree $\U$ considered
not on $W$ but on $W\mid\Omega$, (which we can do, since $\lh(E_{\alpha}^{\U})<\Omega$
for all $\alpha<\Omega$ and $\Omega$ is a regular cardinal, so that
the tree structure does not change) we will have that for all $\alpha\in C$,
$\alpha^{+W\mid\Omega}<\alpha^{+\M_{\Omega}^{\U^{*}}}$. Moreover,
since $i^{\U}(\Omega)=\Omega$, $\OR^{\M_{\Omega}^{\U^{*}}}=\Omega$,
so that $\M_{\Omega}^{\U^{*}}$ is a weasel in the sense of $M$.

However, if $(\T^{\prime},\U^{\prime})$ is the coiteration of $(W\mid\Omega,\M_{\Omega}^{\U^{*}})$,
then $\T^{\prime}=\U^{*}$ and $\U^{\prime}$ is the trivial tree.
Thus, $W\mid\Omega=^{*}\M_{\Omega}^{\U^{*}}$ and there exists a club
$C\subset\Omega$ such that for all $\alpha\in C$, $\alpha^{+W\mid\Omega}<\alpha^{+\M_{\Omega}^{\U^{*}}}$.
This contradicts the conjecture inside $M$.
\begin{rem*}
The construction above also works if we picked $\Omega$ to be $\delta$.
However, in this case we have to modify the construction slightly:
Let $\eta>\kappa$ be a measurable cardinal of $M$ and let $F\in\E^{M}$
be the measure witnessing this. Let $\T^{\prime}$ the linear iteration
of $M\mid\eta^{++M}$ via $F$ and its images of length $\delta+1$.
Note that by $\Sigma_{1}$-bounding $\T^{\prime}\in M$ and that $\delta^{+}$
exists in $\M_{\delta}^{\T^{\prime}}$. Now as before we let $\T$
be a linear iteration of $M\mid\lh(E)$ of via $E$ and its images
of length $\delta+1$. However, note that $\T\in\M_{\delta}^{\T^{\prime}}$.
Thus, there is $A\in\M_{\delta}^{\T^{\prime}}$ such that $\otp(A)=\delta^{+\M_{\delta}^{\T}}$
and $A\subset\delta$. Since $\delta^{+\M_{\delta}^{\T^{\prime}}}$
exists and there is no initial segment of $\M_{\delta}^{\T^{\prime}}$
which models $KP$ and has a $\Sigma_{1}$-Woodin cardinal, we see
by the same argument as before, that $Q$-structures exist for $\U$,
where $\U$ is the genericity iteration of $i_{0\delta}^{\T}(N)$
making $A$ generic.
\end{rem*}

\section{On another Question from CMIP\label{sec:On-another-Conjecture}}

In this last Section we discuss another open question from \cite{CMIP}
concerning the $S$-hull property. Throughout this section $\Omega$
is a fixed measurable cardinal and $\mu_{0}$ is a fixed normal measure
on $\Omega$.

Note that our definitions of thickness and the hull property are different
from the ones in \cite{CMIP}, yet equivalent. We chose these different
definitions in order to emphasize that thickness is a property independent
of a specific weasel.
\begin{defn}
Let $\W$ be weasel and $S\subset\Omega$ be stationary. We say that
$S$ is good for $\W$ iff there is a club $C\subset\Omega$ such
that for all $\alpha\in C\cap S$
\begin{itemize}
\item $\alpha$ is inaccessible,
\item $\alpha^{+}=(\alpha^{+})^{\W}$, and 
\item $\alpha$ is not the critical point of a total-on-$\W$ extender from
the extender sequence of $\W$.
\end{itemize}
\end{defn}

\begin{defn}
Let $\Gamma\subset\Omega$ and $S$ be stationary. We say that $\Gamma$
is $S$-thick iff there is a club $C\subset\Omega$ such that for
all $\alpha\in C\cap S$, $\Gamma\cap\alpha^{+}$ contains an $\alpha$-club
and $\alpha\in\Gamma$. 
\end{defn}

\begin{defn}
Let $\W$ be a weasel and $S\subset\Omega$ be stationary such that
$S$ is good for $\W$. We say that $\W$ has the $S$-hull property
at $\alpha<\Omega$ iff for all $\Gamma\subset\Omega$ which are $S$-thick
\[
\mathcal{P}(\alpha)\cap\W\subset\text{transitive collapse of }\Hull\W{\omega}(\alpha\cup\Gamma).
\]
\end{defn}

In \cite{CMIP} it is proven in Lemma 4.6. on p. 32 that for an $\Omega+1$-iterable
weasel $\W$ for $\mu_{0}$-a.e. $\alpha<\Omega$ the $S$-hull property
holds at $\alpha$.

However, it is mentioned in the paragraph preceding Lemma 4.6. that
it remains open whether the set $HP^{\W}:=\{\alpha<\Omega:\W\text{ has the }S\text{-hull property at }\alpha\}$
is closed. Note that $HP^{\W}$ clearly cannot be closed in the usual
sense, as the following example from \cite{CMIP} p.29 shows: Suppose
that $M$ is an $\Omega+1$-iterable weasel which has the $S$-hull
property at all $\alpha<\Omega$ and there is a total-on-$M$ $E\in\E^{M}$
with at least two generators. Then, by standard arguments $\fu ME\omega$
has the $S$-hull property at all $\alpha<(\crt(E)^{+})^{M}$ but
not at $(\crt(E)^{+})^{M}$. However, $\fu ME\omega$ is still $\Omega+1$-iterable.
Thus, $HP^{\W}$ cannot be closed in the classical sense. 
\begin{defn}
We say that $X\subset\Omega$ is almost closed if for every $\delta\in X$
such that $\delta$ is the supremum of elements of $X$ and elements
of $\Omega\setminus X$, then $\delta\in X$.
\end{defn}

Thus, the question from \cite{CMIP} translates into the following:
Let $\W$ be an $\Omega+1$-iterable weasel and $S\subset\Omega$
stationary such that $S$ is good for $\W$, is the set $HP^{\W}$
almost closed?

This question is anwered positively by the following theorem
\begin{thm}
\label{thm:Hull Property Set is Closed}Let $\W$ be an $\Omega+1$-iterable
weasel and $S\subset\Omega$ be stationary such that $S$ is good
for $\W$. Then the set $HP^{\W}$ is almost closed.
\end{thm}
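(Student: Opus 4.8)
The plan is to show that if $\delta \in HP^{\W}$ is a supremum of both points in $HP^{\W}$ and points in $\Omega \setminus HP^{\W}$, then in fact $\delta \in HP^{\W}$; the interesting content is really the contrapositive, namely that a ``bad'' point $\delta$ (one where the $S$-hull property fails) that is a limit of good points must be isolated from above among bad points, i.e. $\delta$ is not a sup of bad points below it — so the only way $\delta$ can fail to be in $HP^{\W}$ while being a limit of $HP^{\W}$ is ruled out. So fix $\delta \in \Omega$ with $\delta = \sup(HP^{\W}\cap\delta) = \sup((\Omega\setminus HP^{\W})\cap\delta)$, and suppose toward a contradiction that $\delta \notin HP^{\W}$. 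Then there is an $S$-thick class $\Gamma$ such that $\mathcal P(\delta)\cap\W \not\subset \cH{\W}{\omega}(\delta\cup\Gamma)$ (writing $\cH{}{}$ for the transitive collapse of the hull). Pick a witness $a \in \mathcal P(\delta)\cap\W$ with $a \notin \cH{\W}{\omega}(\delta\cup\Gamma)$.

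First I would fix notation and recall the basic mechanics from \cite{CMIP}: for $S$-thick $\Gamma$ the hull $H_\Gamma := \Hull{\W}{\omega}(\delta\cup\Gamma)$ is elementary in $\W$, its transitive collapse is again an $\Omega+1$-iterable weasel, and the $S$-hull property at $\alpha$ is equivalent to the statement that the collapse map is the identity on $\mathcal P(\alpha)\cap\W$; moreover the $S$-hull property is downward persistent in the sense that if it holds at $\alpha$ it holds at every $\beta<\alpha$ (this is immediate from $\mathcal P(\beta)\cap\W \subseteq \mathcal P(\alpha)\cap\W$ together with $\beta \in \alpha \subseteq \delta$). This downward persistence is the crux: it means $HP^{\W}$ is an \emph{initial segment} of $\Omega$, possibly with a top element removed here and there at images of extenders with several generators — in particular, below any $\delta$ which is a limit of $HP^{\W}$, \emph{every} ordinal lies in $HP^{\W}$. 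Hence $(\Omega\setminus HP^{\W})\cap\delta = \emptyset$, contradicting the assumption that $\delta$ is a sup of points outside $HP^{\W}$.

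Wait — that argument, if correct, shows $HP^{\W}$ is almost closed rather trivially, which suggests the subtlety the authors have in mind is precisely whether downward persistence genuinely holds, or whether $HP^{\W}$ can have ``gaps.'' The example in the excerpt (iterating an extender with two generators) shows $HP^{\W}$ can fail to contain its own supremum at a single point $(\crt(E)^+)^{M}$, but that point is a \emph{successor}-type point, not a limit of bad points; so the real work is to show there is no $\delta$ which is simultaneously a limit from below of good points and a limit from below of bad points. So the honest plan is: (1) prove the downward persistence / initial-segment structure of $HP^{\W}$ carefully, isolating exactly which ordinals can be omitted — namely ordinals of the form $\kappa^{+\W}$ where $\kappa$ is the critical point of a total-on-$\W$ extender on the $\W$-sequence with at least two generators, or more precisely ordinals $\alpha$ such that the phalanx-comparison argument of Lemma 4.6 of \cite{CMIP} breaks at $\alpha$ but not below; (2) observe that such ``bad'' $\alpha$ form a set of order type that is \emph{closed in the relevant sense}: if $\delta$ is a limit of bad points then $\delta$ itself is bad, because the obstruction (a dropping point in the comparison, equivalently a disagreement between $\W$ and its hull collapse witnessed cofinally in $\delta$) propagates to $\delta$ by continuity of the collapse embedding at $\delta$ when $\cof(\delta)$ is suitable; (3) conclude that the set of bad points below $\Omega$ is closed, hence $HP^{\W} = \Omega \setminus \{\text{bad points}\}$ is almost closed, since removing a closed set from $\Omega$ leaves an almost closed set.

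The main obstacle I expect is step (2): controlling what happens at a limit $\delta$ of bad points. Concretely, one takes an $S$-thick $\Gamma$ and the collapse map $\sigma \colon \cH{\W}{\omega}(\delta\cup\Gamma) \to \W$; one needs that if for cofinally many $\alpha<\delta$ there is a $\Gamma_\alpha$-witness $a_\alpha \in \mathcal P(\alpha)\cap\W$ outside the $\alpha$-hull, then one can amalgamate these (using that the intersection of two $S$-thick classes is $S$-thick, so one may assume a single $\Gamma$ works cofinally) into a single witness in $\mathcal P(\delta)\cap\W$ outside the $\delta$-hull. The danger is a witness $a$ of the form $a = \bigcup_{\alpha<\delta} a_\alpha$ with each $a_\alpha$ \emph{inside} the $\delta$-hull even though it is outside its own $\alpha$-hull — but this cannot happen, since membership of $a_\alpha$ in $\mathcal P(\alpha)\cap\W$ and the hull property at level $\delta > \alpha$ would force $a_\alpha$ into the $\alpha$-hull. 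So the amalgamation should go through; the remaining care is purely about the definability of $\Gamma$-thick classes and the cofinality of $\delta$, which is where the hypothesis that $S$ is good for $\W$ (giving inaccessible $\alpha$ with $\alpha^+ = (\alpha^+)^\W$ on a club) is used to ensure $\Gamma\cap\delta^+$ still contains a $\delta$-club when $\delta \in C\cap S$. I would present the argument by first reducing, via the example and downward persistence, to showing the bad set is closed, and then running the cofinal-amalgamation argument at a limit of bad points.
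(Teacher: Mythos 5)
Your proposal does not go through; the lemma it is built on is false, and the final reduction inverts the definition it is meant to verify. ``Downward persistence'' fails: for $\beta<\alpha$ one has $\Hull{\W}{\omega}(\beta\cup\Gamma)\subset\Hull{\W}{\omega}(\alpha\cup\Gamma)$ --- the $\beta$-hull is the \emph{smaller} hull, with fewer ordinal generators --- so knowing that $\mathcal P(\beta)\cap\W$ lands in the transitive collapse of the larger hull says nothing about the smaller one; the inclusion you cite points the wrong way. The example quoted in the paper already refutes persistence: in $\fu ME\omega$ the hull property fails precisely on the interval $[(\crt(E)^{+})^{M},\nu(E))$ and holds again at $\nu(E)$ and above, so $HP^{\W}$ is not an initial segment with isolated points removed. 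Your step (3) then gets the target statement backwards: if the set of ``bad'' points were closed under suprema, any $\delta$ that is simultaneously a limit of good points and of bad points would itself be bad, which is exactly what almost closedness forbids (it demands $\delta\in HP^{\W}$). So even a successful version of your amalgamation in step (2) would prove the negation of the theorem; and the amalgamation itself is doubtful, since a union $\bigcup_\alpha a_\alpha$ of witnesses $a_\alpha\subset\alpha$ each outside its own $\alpha$-hull can perfectly well lie in the $\delta$-hull, the $\delta$-hull having more generators.

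The paper's argument is of a different character and you will need its machinery. By Lemma 4.5 of \cite{CMIP} there is a universal weasel $M$ with the $S$-hull property at every $\alpha<\Omega$ and an embedding into $\W$ with $S$-thick range; coiterating $\W$ with $M$ yields a common last model $M_{\infty}$, and the set of failure points of the hull property in $M_{\infty}$ is exactly $\bigcup\{[\crt(E_{\beta}^{\U})^{+\M_{\beta}^{\U}},\nu(E_{\beta}^{\U})):\beta+1\in[0,\Omega]_{U}\}$, from which almost closedness of $HP^{M_{\infty}}$ is immediate. Since $\W$ has the hull property at $\alpha$ iff $M_{\infty}$ has it at $i^{\T}(\alpha)$, a failure of almost closedness for $\W$ at $\delta$ forces $i^{\T}(\delta)>\sup(i^{\T}[\delta])$, and the real content of the proof is the two-case analysis of this discontinuity: either some $i_{0\gamma}^{\T}(\delta)$ is the critical point of an extender used on the main branch (handled by agreement of power sets across that ultrapower), or $i_{0\alpha}^{\T}(\delta)$ acquires measurable cofinality in some $\M_{\alpha}^{\T}$, in which case one must pull the hull property back across a single ultrapower by a \L o\'{s}/closeness computation on the functions representing the parameters. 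None of this appears in your outline, and without it there is no route from the hypothesis to the conclusion.
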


\begin{proof}
By Lemma 4.5. of \cite{CMIP} there is an $\Omega+1$-iterable weasel
$M$ and an elementary embedding $\pi:M\rightarrow\W$ such that $\ran(\pi)$
is $S$-thick and $M$ has the $S$-hull property at all $\alpha<\Omega$,
i.e. $HP^{M}=\Omega$. Let $(\T,\U)$ be the coiteration of $\W$
and $M$. We will prove the Lemma assuming that $\lh(\T)=\lh(\U)=\Omega+1$
and leave the remaining cases as an excercise to the reader. Since
$\W$ and $M$ are universal, $i^{\T}:\W\rightarrow\M_{\Omega}^{\T}$
and $i^{\U}:M\rightarrow\M_{\Omega}^{\U}$ exist and $\M_{\Omega}^{\T}=\M_{\Omega}^{\U}=:M_{\infty}$.

Note that by the Remark following Example 4.3. in \cite{CMIP} on
p. 29, $M_{\infty}$ has the $S$-hull property at $\xi$ iff for
no $\beta+1\in[0,\Omega]_{U}$, $\xi\in[\crt(E_{\beta}^{\U})^{+\M_{\beta}^{\U}},\nu(E_{\beta}^{\U}))$.
Thus, the set $HP^{M_{\infty}}$ is almost closed. Moreover, by arguments
from the proof of Lemma 4.6 in \cite{CMIP} (using that the set of
fixed points of $i^{\T}$ is $S$-thick) we have that for all $\alpha<\Omega$,
$\W$ has the $S$-hull property at $\alpha$ iff $M_{\infty}$ has
the $S$-hull property at $i^{\T}(\alpha)$. 

Suppose for the sake of contradiction that $HP^{\W}$ is not almost
closed. Let $\delta<\Omega$ be a witness for this. Since $HP^{M_{\infty}}$
is almost closed this means that $i^{\T}(\delta)>\sup(i^{\T}[\delta])$. 

There are two ways it can happen that $i^{\T}(\delta)>\sup(i^{\T}[\delta])$. 

Case 1: There is $\beta+1\in[0,\Omega]_{T}$ such that $\crt(E_{\beta}^{\T})=i_{0\gamma}^{\T}(\delta)$,
where $\gamma=\pred_{T}(\beta+1)$. Note that in this case we must
have that $\sup(i_{0\gamma}^{\T}[\delta])=i_{0\gamma}^{\T}(\delta)$
and $\delta^{\prime}:=i_{0\gamma}^{\T}(\delta)\in HP^{M_{\infty}}.$
Since $\crt(i_{\beta+1\Omega}^{\T})>\delta^{\prime}$, $\M_{\beta+1}^{\T}$
has the $S$-hull property at $\delta^{\prime}$. But then, since
$\M_{\beta+1}^{\T}$ has the $S$-hull property at $\delta^{\prime}$
and $\Pow{\delta^{\prime}}\cap\M_{\gamma}^{\T}=\Pow{\delta^{\prime}}\cap\M_{\beta+1}^{\T}$,
$\M_{\gamma}^{\T}$ has the $S$-hull property at $\delta^{\prime}$.
Contradiction!

Case 2: There is a minimal $\alpha\in[0,\Omega]_{T}$ such that in
$\M_{\alpha}^{\T}$, $i_{0\alpha}^{\T}(\delta)$ is singular and $\cof(i_{0\alpha}^{\T}(\delta))=\kappa$,
where $\kappa=\crt(E_{\gamma}^{\T})$, $\pred_{T}(\gamma+1)=\alpha$,
and $\gamma+1\in[0,\Omega]_{T}$. Let us set $W^{\prime}:=\M_{\alpha}^{\T}$,
$W^{\prime\prime}:=\M_{\gamma+1}^{\T}$, $\delta^{\prime}:=i_{0\alpha}^{\T}(\delta)$,
$j:=i_{\alpha\gamma+1}^{\T}$, and $\delta^{\prime\prime}:=\sup(j[\delta^{\prime}])$.
Note that $\delta^{\prime\prime}<j(\delta^{\prime})$. Moreover, letting
$i:=i_{0\alpha}^{\T}$, we have that $\delta^{\prime}=i(\delta)=\sup(i[\delta])$. 

Note that $\delta^{\prime\prime}$ is a limit of $HP^{W^{\prime\prime}}$
and $\Omega\setminus HP^{W^{\prime\prime}}$. We aim to see that $i_{\gamma+1\Omega}^{\T}(\delta^{\prime\prime})$
is also a limit of $HP^{\M_{\Omega}^{\T}}$ and $\Omega\setminus HP^{\M_{\Omega}^{\T}}$,
since then the $S$-hull property holds at $\delta^{\prime\prime}$
in $W^{\prime\prime}$. For this it suffices to see that $i_{\gamma+1\Omega}^{\T}(\delta^{\prime\prime})=\sup i_{\gamma+1\Omega}^{\T}[\delta^{\prime\prime}]$.
To this end note that $\cof(\delta^{\prime\prime})^{W^{\prime\prime}}=\kappa$
and that all extenders used along $[0,\Omega]_{T}$ after $E_{\gamma}^{\T}$
have critical points greater than $\kappa$. Thus, $i_{\gamma+1\Omega}^{\T}$
is continuous at $\delta^{\prime\prime}$ and the $S$-hull property
holds at $\delta^{\prime\prime}$ in $W^{\prime\prime}$.

We claim that this implies that the $S$-hull property holds at $\delta^{\prime}$
in $W^{\prime}$, which would be a contradiction. Let $A\subset\delta^{\prime}$
such that $A\in W^{\prime}$ and $\Gamma$ be a $S$-thick set. We
need to show that there is a term $\tau$, $\vec{\xi}\in\fin\Gamma$,
and $\vec{\beta}\in\fin{\delta^{\prime}}$ such that 
\[
A=\tau^{W^{\prime}}[\vec{\xi},\vec{\beta}]\cap\delta^{\prime}.
\]
Note that $j(A)\cap\delta^{\prime\prime}\in W^{\prime\prime}$. Thus,
there is a term $\sigma$, $\vec{\zeta}\in\fin\Gamma$, and $\vec{\gamma}\in\delta^{\prime\prime}$
such that 
\[
j(A)\cap\delta^{\prime\prime}=\sigma^{W^{\prime\prime}}[\vec{\zeta},\vec{\gamma}]\cap\delta^{\prime\prime}.
\]
We may assume that $\vec{\zeta}$ is fixed by $j$. Let $\vec{\gamma}=([a_{0},f_{0}]_{E}^{W^{\prime}},...,[a_{n},f_{n}]_{E}^{W^{\prime}})$
such that for $k\leq n$, $a_{k}\in\fin{\nu(E)}$ and $f_{k}\in^{\fin\kappa}W^{\prime}\cap W^{\prime}$.
Note that \L o\'{s}'s Theorem holds, in particular for all $\xi<\delta^{\prime}$
we have that 
\[
j(\xi)\in\sigma^{W^{\prime\prime}}[\vec{\zeta},\vec{\gamma}]\iff
\]
\begin{equation}
\{b\in\fin\kappa:W^{\prime}\models\xi\in\sigma^{W}[\vec{\zeta},f_{0}^{a_{0},a}(b),...,f_{n}^{a_{n},a}(b)]\}\in E_{a},\label{eq:term}
\end{equation}
where $a=\bigcup_{i\leq n}a_{n}$. Furthermore, for $\xi<\delta^{\prime}$,
\begin{equation}
j(\xi)\in\sigma^{W^{\prime\prime}}[\vec{\zeta},\vec{\gamma}]\iff j(\xi)\in j(A)\iff\xi\in A.\label{eq:equivalence}
\end{equation}
Note that for every $c\in\fin{\nu(E)}$, $E_{c}$ is close to $W^{\prime}$.
In particular, since $W^{\prime}\models ZFC$, $E_{c}\in W^{\prime}$
for every $c\in\fin{\nu(E)}$. Moreover, $\delta^{\prime}$ is a limit
cardinal in $W^{\prime}$ and $GCH$ holds in $W^{\prime}$. Thus,
for every $c\in\fin{\nu(E)}$ the ordinal of the extender $E_{c}$
in the $W^{\prime}$-order $<_{W^{\prime}}$ is an ordinal less than
$\delta^{\prime}$. Furthermore, as the ordinals below $\delta^{\prime\prime}$
might be represented via bounded functions in $^{[\kappa]^{<\omega}}\delta^{\prime}$,
we may assume that for $k\leq n$, $f_{k}$ is bounded in $\delta^{\prime}$
and thus again their ordinals in the $W^{\prime}$-order ale less
than $\delta^{\prime}$. But this means that \ref{eq:term} and \ref{eq:equivalence}
give us a term $\tau$ and $\vec{\beta}\in\fin{\delta^{\prime}}$
such that 
\[
A=\tau^{W^{\prime}}[\vec{\zeta},\vec{\beta}]\cap\delta^{\prime}.
\]
\end{proof}
\bibliographystyle{plain}
\bibliography{conjecture}

\end{document}